\newtheorem{theorem}{Theorem}
\newtheorem{corollary}{Corollary}
\newtheorem{definition}{Definition}
\newtheorem{remark}{Remark}
\numberwithin{equation}{section}
\begin{document}
\title[Some growth analysis of entire functions in the form of vector valued
Dirichlet series.....]{Some growth analysis of entire functions in the form
of vector valued Dirichlet series on the basis of their (p, q)-th relative
Ritt order and (p, q)-th relative Ritt type}
\author[Tanmay Biswas]{Tanmay Biswas}
\address{T. Biswas : Rajbari, Rabindrapalli, R. N. Tagore Road, P.O.-
Krishnagar, Dist-Nadia, PIN-\ 741101, West Bengal, India}
\email{tanmaybiswas\_math@rediffmail.com}
\keywords{{\small Vector valued Dirichlet series (VVDS), }$\left( p,q\right) 
${\small -th relative Ritt order, }$\left( p,q\right) ${\small -th relative
Ritt lower order, }$\left( p,q\right) ${\small - th relative\ Ritt type, }$%
\left( p,q\right) ${\small -th relative Ritt weak type, growth.}\\
\textit{AMS Subject Classification}\textbf{\ }$\left( 2010\right) $\textbf{\ 
}{\footnotesize : }$30B50,30D15,30D99$}

\begin{abstract}
{\small In this paper we wish to study some growth properties of entire
functions represented by a vector valued Dirichlet series on the basis of }$%
\left( p,q\right) ${\small -th relative Ritt order, }$\left( p,q\right) $%
{\small -th relative Ritt type and }$\left( p,q\right) ${\small -th relative
Ritt weak type where }$p\geq 0${\small \ and }$q\geq 0${\small .}
\end{abstract}

\maketitle

\section{\textbf{Introduction and Definitions}}

\qquad Suppose $f\left( s\right) $ be an entire function of the complex
variable $s=\sigma +it$ ($\sigma $ and $t$ are real variables) defined by
everywhere absolutely convergent \emph{vector valued Dirichlet series }%
briefly known as \emph{VVDS}%
\begin{equation}
f\left( s\right) =\text{ }\overset{\infty }{\underset{n=1}{\sum }}\text{ }%
a_{n}e^{s\lambda _{n}}  \label{x}
\end{equation}%
where $a_{n}$'s belong to a Banach space $\left( E,\left\Vert .\right\Vert
\right) $ and $\lambda _{n}$'s are non-negative real numbers such that $%
0<\lambda _{n}<\lambda _{n+1}\left( n\geq 1\right) ,\lambda _{n}\rightarrow
+\infty $ as $n\rightarrow +\infty $ and satisfy the conditions $\underset{%
n\rightarrow +\infty }{\overline{\lim }}\frac{\log n}{\lambda _{n}}%
=D<+\infty $ and $\underset{n\rightarrow +\infty }{\overline{\lim }}\frac{%
\log \left\Vert a_{n}\right\Vert }{\lambda _{n}}=-\infty ~.$ If $\sigma _{c}$
and $\sigma _{a}$ denote respectively the abscissa of convergence and
absolute convergence of $\left( \ref{x}\right) $, then in this case clearly $%
\sigma _{a}=\sigma _{c}=+\infty .$ The function $M_{f}\left( \sigma \right) $
known as \emph{maximum modulus} function corresponding to an entire function 
$f\left( s\right) $ defined by $\left( \ref{x}\right) $, is written as
follows%
\begin{equation*}
M_{f}\left( \sigma \right) =\text{ }\underset{-\infty <t<+\infty }{l.u.b.}%
\left\Vert f\left( \sigma +it\right) \right\Vert ~.
\end{equation*}

\qquad Now we state the following two notations which are frequently use in
our subsequent study:%
\begin{eqnarray*}
\log ^{[k]}x &=&\log \left( \log ^{[k-1]}x\right) \text{ for }k=1,2,3,\cdot
\cdot \cdot \text{ }; \\
\log ^{[0]}x &=&x,\log ^{[-1]}x=\exp x
\end{eqnarray*}%
and%
\begin{eqnarray*}
\exp ^{[k]}x &=&\exp \left( \exp ^{[k-1]}x\right) \text{ for }k=1,2,3,\cdot
\cdot \cdot \text{ ;} \\
\exp ^{[0]}x &=&x,\exp ^{[-1]}x=\log x~.
\end{eqnarray*}

\qquad Juneja, Nandan and Kapoor \cite{3} first introduced the concept of $%
(p,q)$\emph{-th order }and\emph{\ }$(p,q)$\emph{-th lower order }of an\emph{%
\ entire Dirichlet series} where $p\geq q+1\geq 1$. In the line of Juneja et
al. \cite{3}, one can define the $(p,q)$\emph{\ -th Ritt order }and $(p,q)$%
\emph{-th Ritt lower order} of an entire function $f$ represented by \emph{%
VVDS} in the following way:%
\begin{equation*}
\begin{array}{c}
\rho _{f}\left( p,q\right) \\ 
\lambda _{f}\left( p,q\right)%
\end{array}%
=\underset{\sigma \rightarrow +\infty }{\lim }%
\begin{array}{c}
\sup \\ 
\inf%
\end{array}%
\frac{\log ^{[p]}M_{f}(\sigma )}{\log ^{\left[ q\right] }\sigma }=\underset{%
r\rightarrow +\infty }{\lim }%
\begin{array}{c}
\sup \\ 
\inf%
\end{array}%
\frac{\log ^{[p]}\sigma }{\log ^{\left[ q\right] }M_{f}^{-1}(\sigma )}~.
\end{equation*}

where $p\geq q+1\geq 1$.

\qquad In this connection let us recall that if $0<\rho _{f}\left(
p,q\right) <\infty ,$ then the following properties hold 
\begin{equation*}
\rho _{f}\left( p-n,q\right) =\infty ~\text{for }n<p,~\rho _{f}\left(
p,q-n\right) =0~\text{for }n<q\text{, and}
\end{equation*}%
\begin{equation*}
\rho _{f}\left( p+n,q+n\right) =1~\text{for }n=1,2,...
\end{equation*}

\qquad Similarly for $0<\lambda _{f}\left( p,q\right) <\infty ,$ one can
easily verify that 
\begin{equation*}
\lambda _{f}\left( p-n,q\right) =\infty ~\text{for }n<p,~\lambda _{f}\left(
p,q-n\right) =0~\text{for }n<q\text{, and}
\end{equation*}%
\begin{equation*}
\lambda _{f}\left( p+n,q+n\right) =1~\text{for }n=1,2,....~.
\end{equation*}

\qquad Recalling that for any pair of integer numbers $m,n$ the Kroenecker
function is defined by $\delta _{m,n}=1$ for $m=n$ and $\delta _{m,n}=0$ for 
$m\neq n$, the aforementioned properties provide the following definition.

\begin{definition}
\label{d2} An entire function $f$ represented by \emph{VVDS }is said to have
index-pair $\left( 1,1\right) $ if $0<\rho _{f}\left( 1,1\right) <\infty $.
Otherwise, $f$ is said to have index-pair $\left( p,q\right) \neq \left(
1,1\right) $, $p\geq q+1\geq 1$, if $\delta _{p-q,0}<\rho _{f}\left(
p,q\right) <\infty $ and $\rho _{f}\left( p-1,q-1\right) \notin \mathbb{R}%
^{+}.$
\end{definition}

\begin{definition}
\label{d3.} An entire function $f$ represented by \emph{VVDS }is said to
have lower index-pair $\left( 1,1\right) $ if $0<\lambda _{f}\left(
1,1\right) <\infty $. Otherwise, $f$ is said to have lower index-pair $%
\left( p,q\right) \neq \left( 1,1\right) $, $p\geq q+1\geq 1$, if $\delta
_{p-q,0}<\lambda _{f}\left( p,q\right) <\infty $ and $\lambda _{f}\left(
p-1,q-1\right) \notin \mathbb{R}^{+}.$
\end{definition}

\qquad An entire function $f$ (represented by \emph{VVDS) }of \emph{%
index-pair }$(p,q)$ is said to be of \emph{regular }$\left( p,q\right) $%
\emph{-Ritt growth} if its $\left( p,q\right) $\emph{-th Ritt order}
coincides with its\emph{\ }$(p,q)$\emph{-th Ritt lower order}, otherwise $f$
is said to be of \emph{irregular }$\left( p,q\right) $\emph{-Ritt growth}.

\qquad Now to compare the relative growth of two entire functions
represented by \emph{VVDS }having same non zero finite $\left( p,q\right) $%
\emph{-th Ritt order}, one may introduce the definition of $\left(
p,q\right) $\emph{-th Ritt type }(resp.\emph{\ }$\left( p,q\right) $-th 
\emph{Ritt lower type}) in the following manner:

\begin{definition}
\label{d4.} The $\left( p,q\right) $\emph{-th Ritt type }and\emph{\ }$\left(
p,q\right) $-th \emph{Ritt lower type} respectively denoted by $\Delta
_{f}\left( p,q\right) $ and $\overline{\Delta }_{f}\left( p,q\right) $ of an
entire function $f$ represented by \emph{VVDS when }$0<$ $\rho _{f}\left(
p,q\right) $ $<+\infty $ are defined as follows:%
\begin{equation*}
\begin{array}{c}
\Delta _{f}\left( p,q\right) \\ 
\overline{\Delta }_{f}\left( p,q\right)%
\end{array}%
=\underset{\sigma \rightarrow +\infty }{\lim }%
\begin{array}{c}
\sup \\ 
\inf%
\end{array}%
\frac{\log ^{\left[ p-1\right] }M_{f}\left( \sigma \right) }{\left[ \log ^{%
\left[ q-1\right] }\sigma \right] ^{\rho _{f}\left( p,q\right) }}~.
\end{equation*}%
where $p\geq q+1\geq 1.$
\end{definition}

\qquad Analogously to determine the relative growth of two entire functions
represented by vector valued Dirichlet series having same non zero finite%
\emph{\ }$\left( p,q\right) $-th \emph{Ritt lower order,} one may introduce
the definition of\emph{\ }$\left( p,q\right) $- th \emph{Ritt weak type} in
the following way:

\begin{definition}
\label{d5} The $\left( p,q\right) $- th \emph{Ritt weak type} denoted by $%
\tau _{f}\left( p,q\right) $ of an entire function $f$ represented by \emph{%
VVDS} is defined as follows:%
\begin{equation*}
\tau _{f}\left( p,q\right) =\underset{\sigma \rightarrow +\infty }{%
\underline{\lim }}\frac{\log ^{\left[ p-1\right] }M_{f}\left( \sigma \right) 
}{\left[ \log ^{\left[ q-1\right] }\sigma \right] ^{\lambda _{f}\left(
p,q\right) }}~,~0<\text{ }\lambda _{f}\left( p,q\right) \text{ }<+\infty ~.
\end{equation*}

Also one may define the growth indicator $\overline{\tau }_{f}\left(
p,q\right) $ of an entire function $f$ represented by \emph{VVDS} in the
following manner :%
\begin{equation*}
\overline{\tau }_{f}\left( p,q\right) =\underset{\sigma \rightarrow +\infty }%
{\overline{\lim }}\frac{\log ^{\left[ p-1\right] }M_{f}\left( \sigma \right) 
}{\left[ \log ^{\left[ q-1\right] }\sigma \right] ^{\lambda _{f}\left(
p,q\right) }}~,~0<\text{ }\lambda _{f}\left( p,q\right) \text{ }<+\infty ,
\end{equation*}%
where $p\geq q+1\geq 1.$
\end{definition}

\qquad The above definitions are extended the \emph{generalized Ritt growth
indicators }of an entire function $f$ represented by \emph{VVDS} for each
integer $p\geq 2$ and $q=0.$ Also for $p=2$ and $q=0,$ the above definitions
reduces to the classical definitions of an entire function $f$ represented
by \emph{VVDS.}

\qquad G. S. Srivastava \cite{7} introduced the \emph{relative Ritt order}
between two entire functions represented by \emph{VVDS} to avoid comparing
growth just with $\exp \exp z\ \ $In the case of \emph{relative Ritt order,}
it therefore seems reasonable to define suitably the $(p,q)$\emph{- th
relative Ritt order} of entire function represented by \emph{VVDS}.
Recently, Datta and Biswas \cite{2} introduce the concept of $\left(
p,q\right) $\emph{-th relative Ritt order} $\rho _{g}^{\left( p,q\right)
}\left( f\right) $ of an entire function $f$ represented by \emph{VVDS} with
respect to another entire function $g$ which is also represented by \emph{%
VVDS}, in the light of index-pair which is as follows:

\begin{definition}
\label{d6}\cite{2} Let $f$ and $g$ be any two entire functions represented
by \emph{VVDS }with index-pair $\left( m,q\right) $ and $\left( m,p\right) ,$
respectively, where $p,q,m$ are positive integers such that $m\geq q+1\geq 1$
and $m\geq p+1\geq 1.$ Then the $(p,q)$\emph{- th relative Ritt order} and $%
\left( p,q\right) $\emph{- th relative Ritt lower order }of $f$ with respect
to $g$ are defined as%
\begin{equation*}
\begin{array}{c}
\rho _{g}^{\left( p,q\right) }\left( f\right) \\ 
\lambda _{g}^{\left( p,q\right) }\left( f\right)%
\end{array}%
=\underset{\sigma \rightarrow +\infty }{\lim }%
\begin{array}{c}
\sup \\ 
\inf%
\end{array}%
\frac{\log ^{\left[ p\right] }M_{g}^{-1}M_{f}\left( \sigma \right) }{\log ^{%
\left[ q\right] }\sigma }=\underset{r\rightarrow +\infty }{\lim }%
\begin{array}{c}
\sup \\ 
\inf%
\end{array}%
\frac{\log ^{[p]}M_{g}^{-1}\left( \sigma \right) }{\log ^{\left[ q\right]
}M_{f}^{-1}(\sigma )}~.
\end{equation*}
\end{definition}

\qquad In this connection, we intend to give a definition of relative
index-pair of an entire function\emph{\ }with respect to another entire
function (both of which\ represented by \emph{VVDS) }which is relevant in
the sequel :

\begin{definition}
\label{d7} Let $f$ and $g$ be any two entire functions both\ represented by 
\emph{VVDS} with index-pairs $\left( m,q\right) $ and $\left( m,p\right) $
respectively where $m\geq q+1\geq 1$ and $m\geq p+1\geq 1.$ Then the entire
function $f$ is said to have relative index-pair $\left( p,q\right) $ with
respect to another entire function $g$, if $b<\rho _{g}^{\left( p,q\right)
}\left( f\right) <\infty $ and $\rho _{g}^{\left( p-1,q-1\right) }\left(
f\right) $ is not a nonzero finite number, where $b=1$ if $p=q=m$ and $b=0$
for otherwise. Moreover if $0<\rho _{g}^{\left( p,q\right) }\left( f\right)
<\infty ,$ then%
\begin{equation*}
\rho _{g}^{\left( p-n,q\right) }\left( f\right) =\infty ~\text{for }%
n<p,~\rho _{g}^{\left( p,q-n\right) }\left( f\right) =0~\text{for }n<q\text{
and}
\end{equation*}%
\begin{equation*}
\rho _{g}^{\left( p+n,q+n\right) }\left( f\right) =1~\text{for }n=1,2,....~.
\end{equation*}%
Similarly for $0<\lambda _{g}^{\left( p,q\right) }\left( f\right) <\infty ,$
one can easily verify that%
\begin{equation*}
\lambda _{g}^{\left( p-n,q\right) }\left( f\right) =\infty ~\text{for }%
n<p,~\lambda _{g}^{\left( p,q-n\right) }\left( f\right) =0~\text{for }n<q%
\text{ and}
\end{equation*}%
\begin{equation*}
\lambda _{g}^{\left( p+n,q+n\right) }\left( f\right) =1~\text{for }%
n=1,2,....~.
\end{equation*}
\end{definition}

\qquad Further an entire function $f$ (represented by \emph{VVDS) }for which 
$(p,q)$\emph{-th relative Ritt order }and\emph{\ }$(p,q)$\emph{-th relative
Ritt lower order} with respect to another entire function $g$ (represented
by \emph{VVDS) }are the same is called a function of \emph{regular relative }%
$(p,q)$\emph{\ Ritt growth} with respect to $g$. Otherwise, $f$ is said to be%
\emph{\ irregular relative }$\left( p,q\right) $\emph{\ Ritt growth}.with
respect to $g$.

\qquad Now in order to compare the relative growth of two entire functions
represented by \emph{VVDS} having same non zero finite $(p,q)$\emph{-th
relative Ritt order} with respect to another entire function represented by 
\emph{VVDS, }one may introduce the concepts of $(p,q)$\emph{-th relative
Ritt-type }(resp.\emph{\ }$(p,q)$-th \emph{relative Ritt lower type}) which
are as follows:

\begin{definition}
\label{d8} Let $f$ and $g$ be any two entire functions represented by \emph{%
VVDS }with index-pair $\left( m,q\right) $ and $\left( m,p\right) ,$
respectively, where $p,q,m$ are positive integers such that $m\geq q+1\geq 1$
and $m\geq p+1\geq 1$ and $0<$ $\rho _{g}^{\left( p,q\right) }\left(
f\right) $ $<+\infty .$ Then the $(p,q)$\emph{- th relative Ritt type} and $%
(p,q)$\emph{- th relative Ritt lower type }of $f$ with respect to $g$ are
defined as%
\begin{equation*}
\begin{array}{c}
\Delta _{g}^{(p,q)}\left( f\right) \\ 
\overline{\Delta }_{g}^{(p,q)}\left( f\right)%
\end{array}%
=\underset{\sigma \rightarrow +\infty }{\lim }%
\begin{array}{c}
\sup \\ 
\inf%
\end{array}%
\frac{\log ^{\left[ p-1\right] }M_{g}^{-1}M_{f}\left( \sigma \right) }{\left[
\log ^{\left[ q-1\right] }\sigma \right] ^{\rho _{g}^{\left( p,q\right)
}\left( f\right) }}~.
\end{equation*}
\end{definition}

\qquad Analogously to determine the relative growth of two entire functions
represented by \emph{VVDS} having same non zero finite $p,q)$\emph{-th
relative Ritt lower order }with respect to another entire function
represented by \emph{VVDS}, one may introduce the definition of $(p,q)$\emph{%
- th relative Ritt weak type} in the following way:

\begin{definition}
\label{d9} Let $f$ and $g$ be any two entire functions represented by \emph{%
VVDS }with index-pair $\left( m,q\right) $ and $\left( m,p\right) ,$
respectively, where $p,q,m$ are positive integers such that $m\geq q+1\geq 1$
and $m\geq p+1\geq 1.$ Then $(p,q)$\emph{-th relative Ritt weak type}
denoted by $\tau _{g}^{(p,q)}\left( f\right) $ of an entire function $f$
with respect to another entire function $g$ is defined as follows:%
\begin{equation*}
\tau _{g}^{\left( p,q\right) }\left( f\right) =\underset{\sigma \rightarrow
+\infty }{\underline{\lim }}\frac{\log ^{\left[ p-1\right]
}M_{g}^{-1}M_{f}\left( \sigma \right) }{\left[ \log ^{\left[ q-1\right]
}\sigma \right] ^{\lambda _{g}^{\left( p,q\right) }\left( f\right) }}~,~0<%
\text{ }\lambda _{g}^{\left( p,q\right) }\left( f\right) \text{ }<+\infty ~.
\end{equation*}

Similarly the growth indicator $\overline{\tau }_{g}^{\left( p,q\right)
}\left( f\right) $ of an entire function $f$ with respect to another entire
function $g$ both represented by \emph{VVDS} in the following manner :%
\begin{equation*}
\overline{\tau }_{g}^{\left( p,q\right) }\left( f\right) =\underset{\sigma
\rightarrow +\infty }{\overline{\lim }}\frac{\log ^{\left[ p-1\right]
}M_{g}^{-1}M_{f}\left( \sigma \right) }{\left[ \log ^{\left[ q-1\right]
}\sigma \right] ^{\lambda _{g}^{\left( p,q\right) }\left( f\right) }}~,~0<%
\text{ }\lambda _{g}^{\left( p,q\right) }\left( f\right) \text{ }<+\infty ~.
\end{equation*}
\end{definition}

\qquad If $f$ and $g$ (both $f$ and $g$ are represented by \emph{VVDS) }have
got index-pair $\left( m,0\right) $ and $\left( m,l\right) ,$ respectively,
then Definition \ref{d6}, Definition \ref{d8} and Definition \ref{d9}
reduces to the definition of \emph{generalized relative Ritt growth
indicators.}such as \emph{generalized relative Ritt order }$\rho _{g}^{\left[
l\right] }\left( f\right) $\emph{, generalized relative Ritt type }$\Delta
_{g}^{\left[ l\right] }\left( f\right) $ etc. If the entire functions $f$
and $g$ (both $f$ and $g$ are represented by \emph{VVDS) }have the same
index-pair $\left( p,0\right) $ where $p$ is any positive integer, we get
the definitions of \emph{relative Ritt growth indicators }such as\emph{\
relative Ritt order }$\rho _{g}\left( f\right) $\emph{, relative Ritt type }$%
\Delta _{g}\left( f\right) $ etc introduced by Srivastava \cite{7} and Datta
et al. \cite{1}. Further if $g=\exp ^{\left[ m\right] }z,$ then Definition %
\ref{d6}, Definition \ref{d8} and Definition \ref{d9} reduces to the $\left(
m,q\right) $\emph{\ th Ritt growth indicators }of an entire function $f$
represented by \emph{VVDS.} Also for $g=\exp ^{\left[ m\right] }z,$ \emph{%
relative Ritt growth indicators} reduces to the definition of \emph{%
generalized Ritt growth indicators.}such as \emph{generalized Ritt order }$%
\rho _{g}^{\left[ m\right] }\left( f\right) $\emph{, generalized Ritt type }$%
\Delta _{g}^{\left[ m\right] }\left( f\right) $ etc. Moreover, if $f$ is an
entire function with index-pair $\left( 2,0\right) $ and $g=\exp ^{\left[ 2%
\right] }z$, then Definition \ref{d6}, Definition \ref{d8} and Definition %
\ref{d9} becomes the classical definitions of $f$ represented by \emph{VVDS.}

\qquad During the past decades, several authors $\left\{ \text{cf. \cite{1},%
\cite{x}, \cite{4},\cite{5},\cite{6},\cite{8},\cite{9}, \cite{10}}\right\} $
made closed investigations on the properties of entire Dirichlet series in
different directions using the \emph{growth indicator }such as \emph{Ritt
order}. In the present paper we wish to establish some basic properties of
entire functions represented by a \emph{VVDS} on the basis of $\left(
p,q\right) $-th relative Ritt order, $\left( p,q\right) $-th relative Ritt
type and $\left( p,q\right) $-th relative Ritt weak type where $p\geq 0$ and 
$q\geq 0$.

\section{\textbf{Main Results}}

{\normalsize \qquad In this section we state the main results of the paper.}

\begin{theorem}
\label{l1} Let $f$, $g$ and $h$ be any three entire functions represented by 
\emph{vector valued Dirichlet series }and $p\geq 0$, $q\geq 0$ and $m\geq 0$%
. If $\left( m,q\right) $\emph{-th relative Ritt order} (resp. $\left(
m,q\right) $\emph{-th relative Ritt lower order}) of $f$ with respect to $h$
and $\left( m,p\right) $\emph{-th relative Ritt order} (resp. $\left(
m,p\right) $\emph{-th relative Ritt lower order}) of $g$ with respect to $h$
are respectively denoted by $\rho _{h}^{\left( m,q\right) }\left( f\right) $ 
$\left( \text{resp. }\lambda _{h}^{\left( m,q\right) }\left( f\right)
\right) $ and $\rho _{h}^{\left( m,p\right) }\left( g\right) $ $\left( \text{%
resp. }\lambda _{h}^{\left( m,p\right) }\left( g\right) \right) $, then%
\begin{multline*}
\frac{\lambda _{h}^{\left( m,q\right) }\left( f\right) }{\rho _{h}^{\left(
m,p\right) }\left( g\right) }\leq \lambda _{g}^{\left( p,q\right) }\left(
f\right) \leq \min \left\{ \frac{\lambda _{h}^{\left( m,q\right) }\left(
f\right) }{\lambda _{h}^{\left( m,p\right) }\left( g\right) },\frac{\rho
_{h}^{\left( m,q\right) }\left( f\right) }{\rho _{h}^{\left( m,p\right)
}\left( g\right) }\right\} \\
\leq \max \left\{ \frac{\lambda _{h}^{\left( m,q\right) }\left( f\right) }{%
\lambda _{h}^{\left( m,p\right) }\left( g\right) },\frac{\rho _{h}^{\left(
m,q\right) }\left( f\right) }{\rho _{h}^{\left( m,p\right) }\left( g\right) }%
\right\} \leq \rho _{g}^{\left( p,q\right) }\left( f\right) \leq \frac{\rho
_{h}^{\left( m,q\right) }\left( f\right) }{\lambda _{h}^{\left( m,p\right)
}\left( g\right) }~.
\end{multline*}
\end{theorem}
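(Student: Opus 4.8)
The plan is to reduce the entire four-fold chain to a single factorization identity together with the elementary inequalities governing the limit superior and limit inferior of a product of two positive functions. Throughout I would work with the $\sigma$-form of Definition \ref{d6}, namely $\rho_g^{(p,q)}(f)=\varlimsup_{\sigma\to+\infty}\tfrac{\log^{[p]}M_g^{-1}M_f(\sigma)}{\log^{[q]}\sigma}$ and $\lambda_g^{(p,q)}(f)=\varliminf_{\sigma\to+\infty}(\cdots)$, and I would assume (as the statement implicitly requires for the ratios to be meaningful) that all the indicators $\lambda_h^{(m,q)}(f)$, $\rho_h^{(m,q)}(f)$, $\lambda_h^{(m,p)}(g)$, $\rho_h^{(m,p)}(g)$ are positive and finite.

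First I would set, for large $\sigma$, the quantity $w=w(\sigma)=M_g^{-1}M_f(\sigma)$, so that $M_g(w)=M_f(\sigma)$ and hence $M_h^{-1}M_g(w)=M_h^{-1}M_f(\sigma)$. This gives the exact factorization
\[
\frac{\log^{[p]}M_g^{-1}M_f(\sigma)}{\log^{[q]}\sigma}
=\frac{\log^{[p]}w}{\log^{[m]}M_h^{-1}M_g(w)}\cdot\frac{\log^{[m]}M_h^{-1}M_f(\sigma)}{\log^{[q]}\sigma}=:v(w)\,u(\sigma),
\]
in which the numerator of $u$ coincides with the denominator of $v$, so the identity is a genuine cancellation. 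Since $M_f$ and $M_g$ are continuous, strictly increasing, and tend to $+\infty$, the map $\sigma\mapsto w(\sigma)$ is continuous, increasing and satisfies $w(\sigma)\to+\infty$; consequently the limit inferior and limit superior in $\sigma$ of any expression depending only on $w$ agree with those taken in $w$. Reading off Definition \ref{d6} I then get $\varliminf_\sigma u=\lambda_h^{(m,q)}(f)$, $\varlimsup_\sigma u=\rho_h^{(m,q)}(f)$, and, from the reciprocal of the defining quotient of $\rho_h^{(m,p)}(g)$ and $\lambda_h^{(m,p)}(g)$, that $\varliminf_w v=1/\rho_h^{(m,p)}(g)$ and $\varlimsup_w v=1/\lambda_h^{(m,p)}(g)$.

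Next I would invoke, for positive functions $a,b$ with all relevant one-sided limits positive and finite, the chain
\[
\varliminf a\cdot\varliminf b\le\varliminf(ab)\le\min\bigl\{\varliminf a\,\varlimsup b,\ \varlimsup a\,\varliminf b\bigr\}
\]
and
\[
\max\bigl\{\varliminf a\,\varlimsup b,\ \varlimsup a\,\varliminf b\bigr\}\le\varlimsup(ab)\le\varlimsup a\cdot\varlimsup b.
\]
Substituting $a=u$, $b=v$ and inserting the four extreme values above, and using $\lambda_g^{(p,q)}(f)=\varliminf_\sigma(uv)$ and $\rho_g^{(p,q)}(f)=\varlimsup_\sigma(uv)$, the outer products become $\lambda_h^{(m,q)}(f)/\rho_h^{(m,p)}(g)$ and $\rho_h^{(m,q)}(f)/\lambda_h^{(m,p)}(g)$, while the two mixed products become exactly $\lambda_h^{(m,q)}(f)/\lambda_h^{(m,p)}(g)$ and $\rho_h^{(m,q)}(f)/\rho_h^{(m,p)}(g)$. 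This reproduces the asserted chain verbatim.

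The main obstacle, and the only non-routine part, is the pair of mixed inequalities $\varliminf(ab)\le\varliminf a\cdot\varlimsup b$ and $\varliminf a\cdot\varlimsup b\le\varlimsup(ab)$ (together with their partners obtained by interchanging $a$ and $b$): these are less transparent than the two outer inequalities. I would establish them by passing to a subsequence $\sigma_k$ along which the $\varlimsup$-factor converges to its limit superior, noting that along such a subsequence the other factor retains its full-sequence limit inferior as a lower bound for its subsequential limit inferior, and then reading the product's subsequential limit against the full-sequence $\varliminf$ and $\varlimsup$. I would also guard against the degenerate $0\cdot\infty$ configurations, which the positivity-and-finiteness assumption on the indicators excludes; the transfer of limits from $\sigma$ to $w$, justified above by monotonicity and continuity of the maximum-modulus functions, is the other point deserving an explicit word.
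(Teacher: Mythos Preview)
Your proposal is correct and follows essentially the same strategy as the paper: factor the defining quotient through $h$ and then apply the algebra of $\varliminf$/$\varlimsup$ for a combination of two functions. The paper, however, carries this out \emph{additively} rather than multiplicatively. It takes logarithms and uses the second representation in Definition~\ref{d6}, writing
\[
\log^{[p+1]}M_g^{-1}(\sigma)-\log^{[q+1]}M_f^{-1}(\sigma)=A(\sigma)-B(\sigma),
\]
with $A(\sigma)=\log^{[m+1]}M_h^{-1}(\sigma)-\log^{[q+1]}M_f^{-1}(\sigma)$ and $B(\sigma)=\log^{[m+1]}M_h^{-1}(\sigma)-\log^{[p+1]}M_g^{-1}(\sigma)$, and then applies the standard $\varliminf$/$\varlimsup$ inequalities for the difference $A-B$. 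This has the minor advantage that $A$ and $B$ are already functions of the \emph{same} variable $\sigma$, so no change-of-variable step (your $\sigma\mapsto w=M_g^{-1}M_f(\sigma)$) is needed; your route trades that for working directly with the order quantities rather than their logarithms. Apart from this cosmetic difference the two arguments correspond line by line.

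One small correction to your sketch: the subsequence justification you describe works cleanly for the direction $\varlimsup(ab)\ge\varliminf a\cdot\varlimsup b$, but for $\varliminf(ab)\le\varliminf a\cdot\varlimsup b$ you should pin down the $\varliminf$ factor rather than the $\varlimsup$ factor (choose $\sigma_k$ with $a(\sigma_k)\to\varliminf a$ and use $b(\sigma_k)\le\varlimsup b+\varepsilon$). Alternatively, and more simply, write $a=(ab)/b$ and invoke $\varliminf a\ge\varliminf(ab)/\varlimsup b$ directly.
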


\begin{proof}
From the definitions of $\rho _{g}^{\left( p,q\right) }\left( f\right) $ and 
$\lambda _{g}^{\left( p,q\right) }\left( f\right) $ we get that

\begin{equation}
\log \rho _{g}^{\left( p,q\right) }\left( f\right) =\underset{\sigma
\rightarrow +\infty }{\overline{\lim }}\left[ \log ^{\left[ p+1\right]
}M_{g}^{-1}\left( \sigma \right) -\log ^{\left[ q+1\right]
}M_{f}^{-1}(\sigma )\right] ,  \label{2x}
\end{equation}%
\begin{equation}
\log \lambda _{g}^{\left( p,q\right) }\left( f\right) =\underset{\sigma
\rightarrow +\infty }{\underline{\lim }}\left[ \log ^{\left[ p+1\right]
}M_{g}^{-1}\left( \sigma \right) -\log ^{\left[ q+1\right]
}M_{f}^{-1}(\sigma )\right] .  \label{3x}
\end{equation}

\qquad Now from the definitions of $\rho _{h}^{\left( m,q\right) }\left(
f\right) $ and $\lambda _{h}^{\left( m,q\right) }\left( f\right) ,$ it
follows that%
\begin{eqnarray}
\log \rho _{h}^{\left( m,q\right) }\left( f\right) &=&\underset{\sigma
\rightarrow +\infty }{\overline{\lim }}\left[ \log ^{[m+1]}M_{h}^{-1}\left(
\sigma \right) -\log ^{\left[ q+1\right] }M_{f}^{-1}(\sigma )\right] \text{,}
\label{4x} \\
\log \lambda _{h}^{\left( m,q\right) }\left( f\right) &=&\underset{\sigma
\rightarrow +\infty }{\underline{\lim }}\left[ \log ^{[m+1]}M_{h}^{-1}\left(
\sigma \right) -\log ^{\left[ q+1\right] }M_{f}^{-1}(\sigma )\right] ~.
\label{5x}
\end{eqnarray}

\qquad Similarly, from the definitions of $\rho _{h}^{\left( m,p\right)
}\left( g\right) $ and $\lambda _{h}^{\left( m,p\right) }\left( g\right) ,$
we obtain that%
\begin{eqnarray}
\log \rho _{h}^{\left( m,p\right) }\left( g\right) &=&\underset{\sigma
\rightarrow +\infty }{\overline{\lim }}\left[ \log ^{[m+1]}M_{h}^{-1}\left(
\sigma \right) -\log ^{\left[ p+1\right] }M_{g}^{-1}\left( \sigma \right) %
\right] \text{,}  \label{6x} \\
\log \lambda _{h}^{\left( m,p\right) }\left( g\right) &=&\underset{\sigma
\rightarrow +\infty }{\underline{\lim }}\left[ \log ^{[m+1]}M_{h}^{-1}\left(
\sigma \right) -\log ^{\left[ p+1\right] }M_{g}^{-1}\left( \sigma \right) %
\right] ~.  \label{7x}
\end{eqnarray}

\qquad Therefore from $\left( \ref{3x}\right) ,$ $\left( \ref{5x}\right) $
and $\left( \ref{6x}\right) $, we get that%
\begin{multline*}
\log \lambda _{g}^{\left( p,q\right) }\left( f\right) =\underset{\sigma
\rightarrow +\infty }{\underline{\lim }}\left[ \log ^{[m+1]}M_{h}^{-1}\left(
\sigma \right) -\log ^{\left[ q+1\right] }M_{f}^{-1}(\sigma )\right. \\
\left. -\left( \log ^{[m+1]}M_{h}^{-1}\left( \sigma \right) -\log ^{\left[
p+1\right] }M_{g}^{-1}\left( \sigma \right) \right) \right]
\end{multline*}%
\begin{multline*}
i.e.,~\log \lambda _{g}^{\left( p,q\right) }\left( f\right) \geq \left[ 
\underset{\sigma \rightarrow +\infty }{\underline{\lim }}\left( \log
^{[m+1]}M_{h}^{-1}\left( \sigma \right) -\log ^{\left[ q+1\right]
}M_{f}^{-1}(\sigma )\right) \right. \\
\left. -\underset{\sigma \rightarrow +\infty }{\overline{\lim }}\left( \log
^{[m+1]}M_{h}^{-1}\left( \sigma \right) -\log ^{\left[ p+1\right]
}M_{g}^{-1}\left( \sigma \right) \right) \right]
\end{multline*}%
\begin{equation}
i.e.,~\log \lambda _{g}^{\left( p,q\right) }\left( f\right) \geq \left( \log
\lambda _{h}^{\left( m,q\right) }\left( f\right) -\log \rho _{h}^{\left(
m,p\right) }\left( g\right) \right) ~.  \label{8x}
\end{equation}

\qquad Similarly, from $\left( \ref{2x}\right) ,$ $\left( \ref{4x}\right) $
and $\left( \ref{7x}\right) $, it follows that%
\begin{multline*}
\log \rho _{g}^{\left( p,q\right) }\left( f\right) =\underset{\sigma
\rightarrow +\infty }{\overline{\lim }}\left[ \log ^{[m+1]}M_{h}^{-1}\left(
\sigma \right) -\log ^{\left[ q+1\right] }M_{f}^{-1}(\sigma )\right. \\
\left. -\left( \log ^{[m+1]}M_{h}^{-1}\left( \sigma \right) -\log ^{\left[
p+1\right] }M_{g}^{-1}\left( \sigma \right) \right) \right]
\end{multline*}%
\begin{multline*}
i.e.,~\log \rho _{g}^{\left( p,q\right) }\left( f\right) \leq \left[ 
\underset{\sigma \rightarrow +\infty }{\overline{\lim }}\left( \log
^{[m+1]}M_{h}^{-1}\left( \sigma \right) -\log ^{\left[ q+1\right]
}M_{f}^{-1}(\sigma )\right) \right. \\
\left. -\underset{\sigma \rightarrow +\infty }{\underline{\lim }}\left( \log
^{[m+1]}M_{h}^{-1}\left( \sigma \right) -\log ^{\left[ p+1\right]
}M_{g}^{-1}\left( \sigma \right) \right) \right]
\end{multline*}%
\begin{equation}
i.e.,~\log \rho _{g}^{\left( p,q\right) }\left( f\right) \leq \left( \log
\rho _{h}^{\left( m,q\right) }\left( f\right) -\log \lambda _{h}^{\left(
m,p\right) }\left( g\right) \right) ~.  \label{9x}
\end{equation}

\qquad Again, in view of $\left( \ref{3x}\right) $ we obtain that%
\begin{multline*}
\log \lambda _{g}^{\left( p,q\right) }\left( f\right) =\underset{\sigma
\rightarrow \infty }{\underline{\lim }}\left[ \log ^{[m+1]}M_{h}^{-1}\left(
\sigma \right) -\log ^{\left[ q+1\right] }M_{f}^{-1}(\sigma )\right. \\
\left. -\left( \log ^{[m+1]}M_{h}^{-1}\left( \sigma \right) -\log ^{\left[
p+1\right] }M_{g}^{-1}\left( \sigma \right) \right) \right]
\end{multline*}

\qquad By taking $A=\left( \log ^{[m+1]}M_{h}^{-1}\left( \sigma \right)
-\log ^{\left[ q+1\right] }M_{f}^{-1}(\sigma )\right) $ and \linebreak $%
B=\left( \log ^{[m+1]}M_{h}^{-1}\left( \sigma \right) -\log ^{\left[ p+1%
\right] }M_{g}^{-1}\left( \sigma \right) \right) ,$ we get from above that%
\begin{equation*}
\log \lambda _{g}^{\left( p,q\right) }\left( f\right) \leq \min \left[ 
\underset{\sigma \rightarrow +\infty }{\underline{\lim }}A+\underset{\sigma
\rightarrow +\infty }{\overline{\lim }}-B,\underset{\sigma \rightarrow
+\infty }{\overline{\lim }}A+\underset{\sigma \rightarrow +\infty }{%
\underline{\lim }}-B\right]
\end{equation*}%
\begin{equation*}
i.e.,~\log \lambda _{g}^{\left( p,q\right) }\left( f\right) \leq \min \left[ 
\underset{\sigma \rightarrow \infty }{\underline{\lim }}A-\underset{\sigma
\rightarrow \infty }{\underline{\lim }}B,\underset{\sigma \rightarrow \infty 
}{\overline{\lim }}A-\underset{\sigma \rightarrow \infty }{\overline{\lim }}B%
\right] ~.
\end{equation*}

\qquad Therefore in view of $\left( \ref{4x}\right) ,$ $\left( \ref{5x}%
\right) ,$ $\left( \ref{6x}\right) $ and $\left( \ref{7x}\right) $ we get
from above that%
\begin{equation}
\log \lambda _{g}^{\left( p,q\right) }\left( f\right) \leq \min \left\{ \log
\lambda _{h}^{\left( m,q\right) }\left( f\right) -\log \lambda _{h}^{\left(
m,p\right) }\left( g\right) ,\log \rho _{h}^{\left( m,q\right) }\left(
f\right) -\log \rho _{h}^{\left( m,p\right) }\left( g\right) \right\} ~.
\label{10x}
\end{equation}

\qquad Further from $\left( \ref{2x}\right) $ it follows that%
\begin{multline*}
\log \rho _{g}^{\left( p,q\right) }\left( f\right) =\underset{\sigma
\rightarrow +\infty }{\overline{\lim }}\left[ \log ^{[m+1]}M_{h}^{-1}\left(
\sigma \right) -\log ^{\left[ q+1\right] }M_{f}^{-1}(\sigma )\right. \\
\left. -\left( \log ^{[m+1]}M_{h}^{-1}\left( \sigma \right) -\log ^{\left[
p+1\right] }M_{g}^{-1}\left( \sigma \right) \right) \right]
\end{multline*}

\qquad By taking $A=\left( \log ^{[m+1]}M_{h}^{-1}\left( \sigma \right)
-\log ^{\left[ q+1\right] }M_{f}^{-1}(\sigma )\right) $ and \linebreak $%
B=\left( \log ^{[m+1]}M_{h}^{-1}\left( \sigma \right) -\log ^{\left[ p+1%
\right] }M_{g}^{-1}\left( \sigma \right) \right) ,$ we obtain from above that%
\begin{equation*}
\log \rho _{g}^{\left( p,q\right) }\left( f\right) \geq \max \left[ \underset%
{\sigma \rightarrow +\infty }{\underline{\lim }}A+\underset{\sigma
\rightarrow +\infty }{\overline{\lim }}-B,\underset{\sigma \rightarrow
+\infty }{\overline{\lim }}A+\underset{\sigma \rightarrow +\infty }{%
\underline{\lim }}-B\right]
\end{equation*}%
\begin{equation*}
i.e.,~\log \rho _{g}^{\left( p,q\right) }\left( f\right) \geq \max \left[ 
\underset{\sigma \rightarrow +\infty }{\underline{\lim }}A-\underset{\sigma
\rightarrow +\infty }{\underline{\lim }}B,\underset{\sigma \rightarrow
+\infty }{\overline{\lim }}A-\underset{\sigma \rightarrow +\infty }{%
\overline{\lim }}B\right] ~.
\end{equation*}

\qquad Therefore in view of $\left( \ref{4x}\right) ,$ $\left( \ref{5x}%
\right) ,$ $\left( \ref{6x}\right) $ and $\left( \ref{7x}\right) ,$ it
follows from above that%
\begin{equation}
\log \rho _{g}^{\left( p,q\right) }\left( f\right) \geq \max \left\{ \log
\lambda _{h}^{\left( m,q\right) }\left( f\right) -\log \lambda _{h}^{\left(
m,p\right) }\left( g\right) ,\log \rho _{h}^{\left( m,q\right) }\left(
f\right) -\log \rho _{h}^{\left( m,p\right) }\left( g\right) \right\} ~.
\label{11x}
\end{equation}

\qquad Thus the theorem follows from $\left( \ref{8x}\right) ,$ $\left( \ref%
{9x}\right) ,$ $\left( \ref{10x}\right) $ and $\left( \ref{11x}\right) ~.$
\end{proof}

\qquad In view of Theorem \ref{l1}, one can easily verify the following
corollaries:

\begin{corollary}
\label{C1} Let $f$, $g$ and $h$ be any three entire functions represented by 
\emph{vector valued Dirichlet series. }Also let $f$ be an entire function
with \emph{regular relative }$(m,q)$\emph{-Ritt growth} with respect to
entire function $h$ and $g$ be entire having relative index-pair $\left(
m,p\right) $ with respect to another entire function $h$ where $p\geq 0$, $%
q\geq 0$ and $m\geq 0$. Then%
\begin{equation*}
\lambda _{g}^{\left( p,q\right) }\left( f\right) =\frac{\rho _{h}^{\left(
m,q\right) }\left( f\right) }{\rho _{h}^{\left( m,p\right) }\left( g\right) }%
\text{\ \ \ and \ \ }\rho _{g}^{\left( p,q\right) }\left( f\right) =\frac{%
\rho _{h}^{\left( m,q\right) }\left( f\right) }{\lambda _{h}^{\left(
m,p\right) }\left( g\right) }~.
\end{equation*}%
In addition, if $\rho _{h}^{\left( m,q\right) }\left( f\right) =\rho
_{h}^{\left( m,p\right) }\left( g\right) ,$ then%
\begin{equation*}
\lambda _{g}^{\left( p,q\right) }\left( f\right) =\rho _{f}^{\left(
q,p\right) }\left( g\right) =1~.
\end{equation*}
\end{corollary}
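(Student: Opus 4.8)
The plan is to read off both conclusions directly from Theorem \ref{l1}, using only the two standing hypotheses. Recall first that $f$ being of \emph{regular relative }$(m,q)$\emph{-Ritt growth} with respect to $h$ means exactly that its relative Ritt order and relative Ritt lower order agree, i.e. $\rho_h^{(m,q)}(f)=\lambda_h^{(m,q)}(f)$. I would substitute this single common value wherever $\lambda_h^{(m,q)}(f)$ and $\rho_h^{(m,q)}(f)$ appear in the chain of inequalities furnished by Theorem \ref{l1}.

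After this substitution the two quantities inside the $\min$ and the $\max$ become $\rho_h^{(m,q)}(f)/\lambda_h^{(m,p)}(g)$ and $\rho_h^{(m,q)}(f)/\rho_h^{(m,p)}(g)$. Since a relative Ritt lower order never exceeds the corresponding relative Ritt order, namely $\lambda_h^{(m,p)}(g)\leq \rho_h^{(m,p)}(g)$, the fraction carrying $\rho_h^{(m,p)}(g)$ in its denominator is the smaller one; hence the $\min$ equals $\rho_h^{(m,q)}(f)/\rho_h^{(m,p)}(g)$ and the $\max$ equals $\rho_h^{(m,q)}(f)/\lambda_h^{(m,p)}(g)$. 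But after the substitution the leftmost member of the chain, $\lambda_h^{(m,q)}(f)/\rho_h^{(m,p)}(g)$, is literally equal to this $\min$, so $\lambda_g^{(p,q)}(f)$ is trapped between two identical numbers and must equal $\rho_h^{(m,q)}(f)/\rho_h^{(m,p)}(g)$. Symmetrically, the $\max$ coincides with the rightmost member $\rho_h^{(m,q)}(f)/\lambda_h^{(m,p)}(g)$, squeezing $\rho_g^{(p,q)}(f)$ to that same value. This yields both displayed equalities.

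For the supplementary claim, imposing $\rho_h^{(m,q)}(f)=\rho_h^{(m,p)}(g)$ turns the first equality into $\lambda_g^{(p,q)}(f)=1$ at once. The only step that is not purely formal is deducing $\rho_f^{(q,p)}(g)=1$, and here I would pass to the second (inverse-function) representation of the indicators in Definition \ref{d6}. Writing $u(\sigma)=\log^{[p]}M_g^{-1}(\sigma)$ and $v(\sigma)=\log^{[q]}M_f^{-1}(\sigma)$, that representation reads $\lambda_g^{(p,q)}(f)=\underset{\sigma \rightarrow +\infty}{\underline{\lim}}\,(u/v)$ and $\rho_f^{(q,p)}(g)=\underset{\sigma \rightarrow +\infty}{\overline{\lim}}\,(v/u)$. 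The key observation is the elementary duality $\underset{\sigma \rightarrow +\infty}{\overline{\lim}}\,(v/u)=1\big/\underset{\sigma \rightarrow +\infty}{\underline{\lim}}\,(u/v)$, valid for positive quantities, which applies here because the common value $1$ is finite and nonzero. Thus $\rho_f^{(q,p)}(g)=1/\lambda_g^{(p,q)}(f)=1$, completing the argument. The main obstacle is precisely justifying this reciprocal passage, that is, verifying that $u$ and $v$ are eventually positive so that the $\underline{\lim}$--$\overline{\lim}$ duality is legitimate; everything else is a direct squeeze from Theorem \ref{l1}.
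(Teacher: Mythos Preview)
Your argument is correct and follows exactly the route the paper indicates: the paper offers no explicit proof but simply notes that the corollary ``can easily be verified'' from Theorem~\ref{l1}, and your squeeze from the chain of inequalities after imposing $\lambda_h^{(m,q)}(f)=\rho_h^{(m,q)}(f)$ is precisely that verification. Your handling of the supplementary claim via the reciprocal identity $\overline{\lim}(v/u)=1/\underline{\lim}(u/v)$ is sound; an equivalent way (closer in spirit to the paper) is to apply Theorem~\ref{l1} once more with the roles of $f,g$ and $p,q$ interchanged, which squeezes $\rho_f^{(q,p)}(g)$ between $\max\{\cdot\}=1$ and $\rho_h^{(m,p)}(g)/\lambda_h^{(m,q)}(f)=1$.
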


\begin{corollary}
\label{C2} Let $f$, $g$ and $h$ be any three entire functions represented by 
\emph{vector valued Dirichlet series. }Also let $f$ be an entire function
with relative index-pair $\left( m,q\right) $ with respect to entire
function $h$ and $g$ be entire of \emph{regular relative }$(m,p)$\emph{-Ritt
growth} with respect to another entire function $h$ where $p\geq 0$, $q\geq
0 $ and $m\geq 0$. Then%
\begin{equation*}
\lambda _{g}^{\left( p,q\right) }\left( f\right) =\frac{\lambda _{h}^{\left(
m,q\right) }\left( f\right) }{\rho _{h}^{\left( m,p\right) }\left( g\right) }%
\text{ \ \ and \ \ }\rho _{g}^{\left( p,q\right) }\left( f\right) =\frac{%
\rho _{h}^{\left( m,q\right) }\left( f\right) }{\rho _{h}^{\left( m,p\right)
}\left( g\right) }~.
\end{equation*}%
In addition, if $\rho _{h}^{\left( m,q\right) }\left( f\right) =\rho
_{h}^{\left( m,p\right) }\left( g\right) ,$ then%
\begin{equation*}
\rho _{g}^{\left( p,q\right) }\left( f\right) =\lambda _{f}^{\left(
q,p\right) }\left( g\right) =1~.
\end{equation*}
\end{corollary}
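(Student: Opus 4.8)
The plan is to derive both equalities by squeezing the two-sided estimates of Theorem \ref{l1}, after first translating the regularity hypothesis on $g$, and then to obtain the supplementary assertion from a reciprocity relation between the relative indices. To say that $g$ is of \emph{regular relative} $(m,p)$-Ritt growth with respect to $h$ is to say exactly that $\rho_h^{(m,p)}(g)=\lambda_h^{(m,p)}(g)$; I denote this common value by $\alpha$ and substitute $\lambda_h^{(m,p)}(g)=\rho_h^{(m,p)}(g)=\alpha$ throughout the chain of Theorem \ref{l1}.

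First I would treat $\lambda_g^{(p,q)}(f)$. Under the substitution the leftmost bound $\lambda_h^{(m,q)}(f)/\rho_h^{(m,p)}(g)$ becomes $\lambda_h^{(m,q)}(f)/\alpha$, while the upper bound collapses to $\frac{1}{\alpha}\min\{\lambda_h^{(m,q)}(f),\rho_h^{(m,q)}(f)\}$. Since a lower order never exceeds the corresponding order (immediate from $\liminf\le\limsup$ in Definition \ref{d6}), we have $\lambda_h^{(m,q)}(f)\le\rho_h^{(m,q)}(f)$, so this minimum equals $\lambda_h^{(m,q)}(f)/\alpha$. The two bounds now coincide and force $\lambda_g^{(p,q)}(f)=\lambda_h^{(m,q)}(f)/\rho_h^{(m,p)}(g)$. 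An entirely parallel argument settles $\rho_g^{(p,q)}(f)$: its lower bound, the maximum term, reduces to $\rho_h^{(m,q)}(f)/\alpha$ (again using $\lambda_h^{(m,q)}(f)\le\rho_h^{(m,q)}(f)$ to evaluate the maximum), which matches the rightmost bound $\rho_h^{(m,q)}(f)/\lambda_h^{(m,p)}(g)=\rho_h^{(m,q)}(f)/\alpha$, giving the second equality.

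For the supplementary statement I assume in addition that $\rho_h^{(m,q)}(f)=\rho_h^{(m,p)}(g)$; the second equality then yields $\rho_g^{(p,q)}(f)=1$ immediately. It remains to identify $\lambda_f^{(q,p)}(g)$, and this is the one step that goes beyond the squeeze. Using the second ($M^{-1}$) form of Definition \ref{d6}, I would set $u(\sigma)=\log^{[p]}M_g^{-1}(\sigma)/\log^{[q]}M_f^{-1}(\sigma)$, so that $\rho_g^{(p,q)}(f)=\limsup_{\sigma\to+\infty}u(\sigma)$ while $\lambda_f^{(q,p)}(g)=\liminf_{\sigma\to+\infty}1/u(\sigma)$. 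Because $u(\sigma)$ is eventually positive, the elementary identity $\liminf(1/u)=1/\limsup u$ gives $\lambda_f^{(q,p)}(g)=1/\rho_g^{(p,q)}(f)$, whence $\rho_g^{(p,q)}(f)=1$ forces $\lambda_f^{(q,p)}(g)=1$ as well.

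I expect the genuine obstacle to be this last reciprocity step rather than the squeeze. The identity $\liminf(1/u)=1/\limsup u$ holds only when $u$ is positive and bounded away, in the limit, from the degenerate values $0$ and $+\infty$; the point needing care is thus to confirm that the relative index-pair hypothesis on $f$ with respect to $h$, together with the regularity of $g$, keeps $\rho_g^{(p,q)}(f)$ finite and nonzero, so that the reciprocal is meaningful and equals $1$ in the limiting case. The two main equalities, by contrast, are mechanical specializations of Theorem \ref{l1}.
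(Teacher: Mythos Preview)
Your proposal is correct and is exactly what the paper intends: the corollary is stated there without proof as an immediate consequence of Theorem \ref{l1}, and your squeeze after substituting $\lambda_h^{(m,p)}(g)=\rho_h^{(m,p)}(g)$ is precisely how one makes that verification explicit. For the supplementary assertion your reciprocity identity $\lambda_f^{(q,p)}(g)=1/\rho_g^{(p,q)}(f)$ is valid and your closing worry is unnecessary, since you have already established $\rho_g^{(p,q)}(f)=1\in(0,\infty)$ before invoking it; alternatively, and perhaps more in the spirit of the paper, one can obtain $\lambda_f^{(q,p)}(g)=1$ by reapplying Theorem \ref{l1} with the roles of $f,g$ and of $p,q$ interchanged, which keeps the entire argument a pure specialization of that theorem and sidesteps the reciprocity discussion altogether.
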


\begin{corollary}
\label{C3} Let $f$, $g$ and $h$ be any three entire functions represented by 
\emph{vector valued Dirichlet series. }Also let $f$ and $g$ be any two
entire functions with \emph{regular relative }$(m,q)$\emph{-Ritt growth} and 
\emph{regular relative }$(m,p)$\emph{-th Ritt growth} with respect to entire
function $h$ respectively where $p\geq 0$, $q\geq 0$ and $m\geq 0$. Then%
\begin{equation*}
\lambda _{g}^{\left( p,q\right) }\left( f\right) =\rho _{g}^{\left(
p,q\right) }\left( f\right) =\frac{\rho _{h}^{\left( m,q\right) }\left(
f\right) }{\rho _{h}^{\left( m,p\right) }\left( g\right) }~.
\end{equation*}
\end{corollary}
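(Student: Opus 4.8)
The plan is to obtain Corollary \ref{C3} as a direct specialization of the inequality chain proved in Theorem \ref{l1}. The crucial observation is that the hypothesis of \emph{regular relative Ritt growth} is exactly the statement that the relative Ritt order coincides with the relative Ritt lower order. Hence, since $f$ is of regular relative $(m,q)$-Ritt growth with respect to $h$, one has $\rho_h^{(m,q)}(f)=\lambda_h^{(m,q)}(f)$; and since $g$ is of regular relative $(m,p)$-Ritt growth with respect to $h$, one has $\rho_h^{(m,p)}(g)=\lambda_h^{(m,p)}(g)$.

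First I would substitute these two equalities into the four ratios that appear in the $\min$ and $\max$ brackets of Theorem \ref{l1}. Writing $A:=\rho_h^{(m,q)}(f)=\lambda_h^{(m,q)}(f)$ and $B:=\rho_h^{(m,p)}(g)=\lambda_h^{(m,p)}(g)$, each of the four quotients
\[
\frac{\lambda_h^{(m,q)}(f)}{\rho_h^{(m,p)}(g)},\qquad
\frac{\lambda_h^{(m,q)}(f)}{\lambda_h^{(m,p)}(g)},\qquad
\frac{\rho_h^{(m,q)}(f)}{\rho_h^{(m,p)}(g)},\qquad
\frac{\rho_h^{(m,q)}(f)}{\lambda_h^{(m,p)}(g)}
\]
collapses to the single value $A/B$. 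In particular both the $\min$ and the $\max$ occurring in the theorem reduce to $A/B$, as do the leftmost and rightmost members of the chain.

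Next I would simply read off the conclusion. Since every entry in the chain of Theorem \ref{l1} now equals $A/B$, the inequalities force
\[
\frac{A}{B}\leq \lambda_g^{(p,q)}(f)\leq \frac{A}{B}
\qquad\text{and}\qquad
\frac{A}{B}\leq \rho_g^{(p,q)}(f)\leq \frac{A}{B},
\]
so that both $\lambda_g^{(p,q)}(f)$ and $\rho_g^{(p,q)}(f)$ are squeezed to the common value $A/B=\rho_h^{(m,q)}(f)/\rho_h^{(m,p)}(g)$, which is precisely the asserted equality.

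There is no genuine obstacle in this argument: the whole proof is the collapse of the $\min$/$\max$ bracket under the two regularity equalities. The only point deserving a word of care is the tacit requirement that the orders involved be nonzero and finite, so that the ratio $A/B$ is well defined and the logarithmic manipulations underlying Theorem \ref{l1} remain meaningful; this is secured by the standing assumptions on the relative index-pairs of $f$ and $g$ with respect to $h$.
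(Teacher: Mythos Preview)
Your argument is correct and matches the paper's approach: the paper states Corollary~\ref{C3} as an immediate consequence of Theorem~\ref{l1} without separate proof, and your substitution of the two regularity equalities into the inequality chain is exactly the intended verification. The only implicit point you flag---finiteness and positivity of the orders so that the ratios make sense---is likewise left tacit in the paper.
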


\begin{corollary}
\label{C4} Let $f$, $g$ and $h$ be any three entire functions represented by 
\emph{vector valued Dirichlet series. }Also let $f$ and $g$ be any two
entire functions with r\emph{regular relative }$(m,q)$\emph{- Ritt growth}
and \emph{regular relative }$(m,p)$\emph{- Ritt growth} with respect to
entire function $h$ respectively where $p\geq 0$, $q\geq 0$ and $m\geq 0.$
Also suppose that $\rho _{h}^{\left( m,q\right) }\left( f\right) =\rho
_{h}^{\left( m,p\right) }\left( g\right) .$ Then%
\begin{equation*}
\lambda _{g}^{\left( p,q\right) }\left( f\right) =\rho _{g}^{\left(
p,q\right) }\left( f\right) =\lambda _{f}^{\left( q,p\right) }\left(
g\right) =\rho _{f}^{\left( q,p\right) }\left( g\right) =1~.
\end{equation*}
\end{corollary}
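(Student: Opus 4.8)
The plan is to deduce Corollary \ref{C4} from Corollary \ref{C3} by invoking it twice: once in the configuration as stated, and once after symmetrically interchanging the roles of the pairs $\left( f,q\right) $ and $\left( g,p\right) $, each time simplifying the resulting ratio by means of the extra hypothesis $\rho _{h}^{\left( m,q\right) }\left( f\right) =\rho _{h}^{\left( m,p\right) }\left( g\right) $.

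First I would unwind the meaning of the growth hypotheses. By the definition of \emph{regular relative Ritt growth}, the assumption that $f$ has regular relative $\left( m,q\right) $-Ritt growth with respect to $h$ is precisely $\lambda _{h}^{\left( m,q\right) }\left( f\right) =\rho _{h}^{\left( m,q\right) }\left( f\right) $, and likewise regular relative $\left( m,p\right) $-Ritt growth of $g$ with respect to $h$ means $\lambda _{h}^{\left( m,p\right) }\left( g\right) =\rho _{h}^{\left( m,p\right) }\left( g\right) $. These are exactly the hypotheses of Corollary \ref{C3}, so I may apply it directly to obtain $\lambda _{g}^{\left( p,q\right) }\left( f\right) =\rho _{g}^{\left( p,q\right) }\left( f\right) =\frac{\rho _{h}^{\left( m,q\right) }\left( f\right) }{\rho _{h}^{\left( m,p\right) }\left( g\right) }$. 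Substituting the supplementary assumption $\rho _{h}^{\left( m,q\right) }\left( f\right) =\rho _{h}^{\left( m,p\right) }\left( g\right) $ collapses this ratio to $1$, which establishes the first two of the four asserted equalities.

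Next I would obtain the remaining equalities $\lambda _{f}^{\left( q,p\right) }\left( g\right) =\rho _{f}^{\left( q,p\right) }\left( g\right) =1$ by reapplying Corollary \ref{C3} to the interchanged configuration. Since $g$ has relative index-pair $\left( m,p\right) $ and $f$ has relative index-pair $\left( m,q\right) $ with respect to $h$, and since the regular-growth hypotheses on $f$ and $g$ are symmetric under swapping $\left( f,g\right) $ with $\left( g,f\right) $ and $\left( p,q\right) $ with $\left( q,p\right) $, Corollary \ref{C3} applies verbatim to the relabelled data and yields $\lambda _{f}^{\left( q,p\right) }\left( g\right) =\rho _{f}^{\left( q,p\right) }\left( g\right) =\frac{\rho _{h}^{\left( m,p\right) }\left( g\right) }{\rho _{h}^{\left( m,q\right) }\left( f\right) }$. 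The equality $\rho _{h}^{\left( m,q\right) }\left( f\right) =\rho _{h}^{\left( m,p\right) }\left( g\right) $ again forces this ratio to be $1$.

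Combining the two applications gives $\lambda _{g}^{\left( p,q\right) }\left( f\right) =\rho _{g}^{\left( p,q\right) }\left( f\right) =\lambda _{f}^{\left( q,p\right) }\left( g\right) =\rho _{f}^{\left( q,p\right) }\left( g\right) =1$, as required. There is no genuine obstacle here, only one point of bookkeeping to confirm: that the index-pair admissibility conditions $m\geq q+1\geq 1$ and $m\geq p+1\geq 1$ which underlie Theorem \ref{l1}, and hence Corollary \ref{C3}, remain in force after the symmetric relabelling. Because these conditions are themselves symmetric in $p$ and $q$, the verification is immediate, so Corollary \ref{C3} is legitimately available in the swapped configuration and the argument closes.
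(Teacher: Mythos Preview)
Your proof is correct and follows essentially the route the paper intends: the paper does not give an explicit proof of Corollary~\ref{C4} but simply declares that it (along with Corollaries~\ref{C1}--\ref{c8}) can be verified directly from Theorem~\ref{l1}, and your argument via Corollary~\ref{C3} applied twice (once as stated, once with $(f,q)$ and $(g,p)$ interchanged) is exactly such a verification. One small remark: the admissibility conditions you flag at the end ($m\geq q+1\geq 1$, $m\geq p+1\geq 1$) are those appearing in Definition~\ref{d6}, but Theorem~\ref{l1} and its corollaries are stated under the weaker hypotheses $p\geq 0$, $q\geq 0$, $m\geq 0$, so that bookkeeping concern does not even arise here.
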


\begin{corollary}
\label{c5} Let $f$, $g$ and $h$ be any three entire functions represented by 
\emph{vector valued Dirichlet series. }Also let $f$ and $g$ be any two
entire functions with relative index-pairs $\left( m,q\right) $ and $\left(
m,p\right) $ with respect to entire function $h$ respectively where $p\geq 0$%
, $q\geq 0$ and $m\geq 0$ and either $f$ is not of regular relative $\left(
m,q\right) $ - Ritt growth or $g$ is not of regular relative $\left(
m,p\right) $ - Ritt growth, then%
\begin{equation*}
\rho _{g}^{\left( p,q\right) }\left( f\right) .\rho _{f}^{\left( q,p\right)
}\left( g\right) \geq 1~.
\end{equation*}%
If $f$ and $g$ are both of regular relative $\left( m,q\right) $- Ritt
growth and regular relative $\left( m,p\right) $ - Ritt growth with respect
to entire function $h$ respectively, then%
\begin{equation*}
\rho _{g}^{\left( p,q\right) }\left( f\right) .\rho _{f}^{\left( q,p\right)
}\left( g\right) =1~.
\end{equation*}
\end{corollary}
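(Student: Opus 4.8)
The plan is to apply Theorem \ref{l1} twice: once exactly as stated to bound $\rho _{g}^{\left( p,q\right) }\left( f\right) $ from below, and once with the roles of $f$ and $g$ (and simultaneously of $p$ and $q$) interchanged to bound $\rho _{f}^{\left( q,p\right) }\left( g\right) $ from below. Reading off the left-hand portion of the inequality chain in Theorem \ref{l1}, the first application yields
\[
\rho _{g}^{\left( p,q\right) }\left( f\right) \geq \max \left\{ \frac{\lambda _{h}^{\left( m,q\right) }\left( f\right) }{\lambda _{h}^{\left( m,p\right) }\left( g\right) },\,\frac{\rho _{h}^{\left( m,q\right) }\left( f\right) }{\rho _{h}^{\left( m,p\right) }\left( g\right) }\right\} ,
\]
while the second, obtained by substituting $f\leftrightarrow g$ and $p\leftrightarrow q$, yields
\[
\rho _{f}^{\left( q,p\right) }\left( g\right) \geq \max \left\{ \frac{\lambda _{h}^{\left( m,p\right) }\left( g\right) }{\lambda _{h}^{\left( m,q\right) }\left( f\right) },\,\frac{\rho _{h}^{\left( m,p\right) }\left( g\right) }{\rho _{h}^{\left( m,q\right) }\left( f\right) }\right\} .
\]

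Multiplying these two lower bounds and using only the elementary fact that each maximum dominates its first argument (so that the product of the two maxima is at least $\frac{\lambda _{h}^{\left( m,q\right) }\left( f\right) }{\lambda _{h}^{\left( m,p\right) }\left( g\right) }\cdot \frac{\lambda _{h}^{\left( m,p\right) }\left( g\right) }{\lambda _{h}^{\left( m,q\right) }\left( f\right) }=1$), I obtain immediately $\rho _{g}^{\left( p,q\right) }\left( f\right) \cdot \rho _{f}^{\left( q,p\right) }\left( g\right) \geq 1$. I note that this first inequality in fact requires no regularity hypothesis whatsoever; the stated condition that at least one of $f,g$ fail to be of regular relative growth merely serves to complement the equality case treated next.

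For the equality statement I would invoke Corollary \ref{C3} in each direction. When $f$ is of regular relative $(m,q)$-Ritt growth and $g$ is of regular relative $(m,p)$-Ritt growth with respect to $h$, Corollary \ref{C3} collapses the chain of Theorem \ref{l1} to equalities, giving $\rho _{g}^{\left( p,q\right) }\left( f\right) =\rho _{h}^{\left( m,q\right) }\left( f\right) /\rho _{h}^{\left( m,p\right) }\left( g\right) $; applying the same corollary with $f\leftrightarrow g$ and $p\leftrightarrow q$ gives $\rho _{f}^{\left( q,p\right) }\left( g\right) =\rho _{h}^{\left( m,p\right) }\left( g\right) /\rho _{h}^{\left( m,q\right) }\left( f\right) $. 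The product of these two expressions telescopes to exactly $1$.

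The main point requiring care is the legitimacy of the second, swapped invocation: one must check that interchanging $f$ with $g$ and $p$ with $q$ preserves the index-pair hypotheses of Theorem \ref{l1} and Corollary \ref{C3}. Here the symmetry is favourable, since the pair of assumptions that $f$ has relative index-pair $\left( m,q\right) $ and $g$ has relative index-pair $\left( m,p\right) $ with respect to $h$ passes over verbatim to the configuration needed for $\rho _{f}^{\left( q,p\right) }\left( g\right) $, so the hypotheses are genuinely symmetric. Beyond this, I would verify that all four indicators $\rho _{h}^{\left( m,q\right) }\left( f\right) ,\lambda _{h}^{\left( m,q\right) }\left( f\right) ,\rho _{h}^{\left( m,p\right) }\left( g\right) ,\lambda _{h}^{\left( m,p\right) }\left( g\right) $ are positive and finite, so that every ratio and maximum is well defined and the cancellations in the telescoping step are valid. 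Once this bookkeeping is in place, both assertions follow at once.
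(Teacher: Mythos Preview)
Your proposal is correct and follows exactly the route the paper intends: the paper offers no separate proof of Corollary~\ref{c5}, stating only that ``in view of Theorem~\ref{l1}, one can easily verify'' it, and your argument---applying Theorem~\ref{l1} once as stated and once with $f\leftrightarrow g$, $p\leftrightarrow q$, then multiplying the resulting lower bounds (and invoking Corollary~\ref{C3} for the equality case)---is precisely that verification. Your observation that the inequality $\rho _{g}^{\left( p,q\right) }\left( f\right) \cdot \rho _{f}^{\left( q,p\right) }\left( g\right) \geq 1$ actually holds without any irregularity hypothesis is also correct and worth noting.
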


\begin{corollary}
\label{c6} Let $f$, $g$ and $h$ be any three entire functions represented by 
\emph{vector valued Dirichlet series. }Also let $f$ and $g$ be any two
entire functions with relative index-pairs $\left( m,q\right) $ and $\left(
m,p\right) $ with respect to entire function $h$ respectively where $p\geq 0$%
, $q\geq 0$ and $m\geq 0$ and either $f$ is not of regular relative $\left(
m,q\right) $ - Ritt growth or $g$ is not of regular relative $\left(
m,p\right) $ - Ritt growth, then%
\begin{equation*}
\lambda _{g}^{\left( p,q\right) }\left( f\right) .\lambda _{f}^{\left(
q,p\right) }\left( g\right) \leq 1~.
\end{equation*}%
If $f$ and $g$ are both of regular relative $\left( m,q\right) $ - Ritt
growth and regular relative $\left( m,p\right) $ -Ritt growth with respect
to entire function $h$ respectively, then%
\begin{equation*}
\lambda _{g}^{\left( p,q\right) }\left( f\right) .\lambda _{f}^{\left(
q,p\right) }\left( g\right) =1~.
\end{equation*}
\end{corollary}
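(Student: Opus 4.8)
The plan is to obtain both assertions directly from Theorem \ref{l1} together with the symmetric instance of that theorem obtained by interchanging the roles of $f$ and $g$ and of $p$ and $q$. To streamline the bookkeeping I would write $a=\lambda _{h}^{(m,q)}(f)$, $b=\rho _{h}^{(m,q)}(f)$, $c=\lambda _{h}^{(m,p)}(g)$ and $d=\rho _{h}^{(m,p)}(g)$; these four quantities are strictly positive and finite precisely because $f$ and $g$ have relative index-pairs $(m,q)$ and $(m,p)$ with respect to $h$, and of course $a\leq b$ and $c\leq d$. In this notation Theorem \ref{l1} reads $a/d\leq \lambda _{g}^{(p,q)}(f)\leq \min \{a/c,\,b/d\}$. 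Applying the same theorem after the substitution $f\leftrightarrow g$, $p\leftrightarrow q$ (so that now $g$ is the function whose relative lower order is measured against $f$) yields $c/b\leq \lambda _{f}^{(q,p)}(g)\leq \min \{c/a,\,d/b\}$.

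First I would prove the inequality. Multiplying the two upper bounds gives $\lambda _{g}^{(p,q)}(f)\cdot \lambda _{f}^{(q,p)}(g)\leq \min \{a/c,\,b/d\}\cdot \min \{c/a,\,d/b\}$. Since each argument of the first minimum is the reciprocal of the corresponding argument of the second, the identity $\min \{a/c,\,b/d\}=1/\max \{c/a,\,d/b\}$ holds, so the product on the right collapses to $\min \{c/a,\,d/b\}/\max \{c/a,\,d/b\}$, which is at most $1$. This already establishes $\lambda _{g}^{(p,q)}(f)\cdot \lambda _{f}^{(q,p)}(g)\leq 1$ with no regularity hypothesis whatsoever; in particular it covers the stated case in which at least one of $f,g$ fails to be of regular relative Ritt growth.

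For the equality I would bring in regularity. If $f$ is of regular relative $(m,q)$-Ritt growth and $g$ of regular relative $(m,p)$-Ritt growth with respect to $h$, then $a=b$ and $c=d$. The upper bounds of the previous paragraph now degenerate to $\lambda _{g}^{(p,q)}(f)\leq a/c$ and $\lambda _{f}^{(q,p)}(g)\leq c/a$, whence the product is $\leq 1$; simultaneously the lower bounds become $\lambda _{g}^{(p,q)}(f)\geq a/d=a/c$ and $\lambda _{f}^{(q,p)}(g)\geq c/b=c/a$, whence the product is $\geq 1$. Squeezing between these two estimates forces $\lambda _{g}^{(p,q)}(f)\cdot \lambda _{f}^{(q,p)}(g)=1$.

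The only genuinely delicate point is checking that every indicator that occurs in a denominator is strictly positive and finite, so that the divisions and the reciprocal identity $\min \{a/c,\,b/d\}=1/\max \{c/a,\,d/b\}$ are legitimate; this is exactly what the relative index-pair hypotheses on $f$ and $g$ supply. Once that is noted, the remainder is a short manipulation of the min/max bounds furnished by Theorem \ref{l1}, requiring no further analytic input.
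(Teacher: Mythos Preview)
Your proposal is correct and follows exactly the approach the paper intends: the paper does not give a separate proof of this corollary but simply remarks that it, along with Corollaries \ref{C1}--\ref{c8}, can be verified directly from Theorem \ref{l1}, and your argument is precisely such a verification via the two symmetric instances of Theorem \ref{l1}. Your observation that the inequality $\lambda _{g}^{(p,q)}(f)\cdot \lambda _{f}^{(q,p)}(g)\leq 1$ in fact holds without any irregularity hypothesis is a nice bonus that the paper's statement does not make explicit.
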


\begin{corollary}
\label{c7} Let $f$, $g$ and $h$ be any three entire functions represented by 
\emph{vector valued Dirichlet series }and\emph{\ }$p\geq 0$, $q\geq 0$ and $%
m\geq 0.$Also let $f$ be an entire function with relative index-pair $\left(
m,q\right) $, Then%
\begin{eqnarray*}
\left( i\right) ~\lambda _{g}^{\left( p,q\right) }\left( f\right) &=&\infty ~%
\text{when }\rho _{h}^{\left( m,p\right) }\left( g\right) =0~, \\
\left( ii\right) ~\rho _{g}^{\left( p,q\right) }\left( f\right) &=&\infty ~%
\text{when }\lambda _{h}^{\left( m,p\right) }\left( g\right) =0~, \\
\left( iii\right) ~\lambda _{g}^{\left( p,q\right) }\left( f\right) &=&0~%
\text{when }\rho _{h}^{\left( m,p\right) }\left( g\right) =\infty
\end{eqnarray*}%
and%
\begin{equation*}
\left( iv\right) ~\rho _{g}^{\left( p,q\right) }\left( f\right) =0~\text{%
when }\lambda _{h}^{\left( m,p\right) }\left( g\right) =\infty ~.
\end{equation*}
\end{corollary}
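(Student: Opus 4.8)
The plan is to read off all four assertions directly from the chain of inequalities established in Theorem \ref{l1}, since Corollary \ref{c7} is exactly the degenerate boundary behaviour of that chain. First I would record the consequence of the hypothesis that $f$ has relative index-pair $\left( m,q\right) $ with respect to $h$: by Definition \ref{d7} this forces $0<\rho _{h}^{\left( m,q\right) }\left( f\right) <\infty $, and together with $\lambda _{h}^{\left( m,q\right) }\left( f\right) \leq \rho _{h}^{\left( m,q\right) }\left( f\right) $ it gives that both $\lambda _{h}^{\left( m,q\right) }\left( f\right) $ and $\rho _{h}^{\left( m,q\right) }\left( f\right) $ are positive and finite. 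This is the only structural fact needed; everything else is a matter of substituting a $0$ or an $\infty $ into the appropriate denominator.

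For parts $\left( i\right) $ and $\left( ii\right) $ I would use the two left-most (lower) bounds of Theorem \ref{l1}. For $\left( i\right) $, the bound $\lambda _{g}^{\left( p,q\right) }\left( f\right) \geq \frac{\lambda _{h}^{\left( m,q\right) }\left( f\right) }{\rho _{h}^{\left( m,p\right) }\left( g\right) }$ has a strictly positive numerator, so setting $\rho _{h}^{\left( m,p\right) }\left( g\right) =0$ sends the right-hand side to $+\infty $ and hence $\lambda _{g}^{\left( p,q\right) }\left( f\right) =\infty $. For $\left( ii\right) $, I would isolate from the max-term the inequality $\rho _{g}^{\left( p,q\right) }\left( f\right) \geq \frac{\lambda _{h}^{\left( m,q\right) }\left( f\right) }{\lambda _{h}^{\left( m,p\right) }\left( g\right) }$; again the numerator is positive, so $\lambda _{h}^{\left( m,p\right) }\left( g\right) =0$ forces $\rho _{g}^{\left( p,q\right) }\left( f\right) =\infty $.

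For parts $\left( iii\right) $ and $\left( iv\right) $ I would instead use the two right-most (upper) bounds, exploiting that the numerators stay finite. For $\left( iii\right) $, the min-term yields $\lambda _{g}^{\left( p,q\right) }\left( f\right) \leq \frac{\rho _{h}^{\left( m,q\right) }\left( f\right) }{\rho _{h}^{\left( m,p\right) }\left( g\right) }$, whose right-hand side tends to $0$ when $\rho _{h}^{\left( m,p\right) }\left( g\right) =\infty $; since the relative Ritt lower order is nonnegative, $\lambda _{g}^{\left( p,q\right) }\left( f\right) =0$. For $\left( iv\right) $, the last bound $\rho _{g}^{\left( p,q\right) }\left( f\right) \leq \frac{\rho _{h}^{\left( m,q\right) }\left( f\right) }{\lambda _{h}^{\left( m,p\right) }\left( g\right) }$ similarly vanishes when $\lambda _{h}^{\left( m,p\right) }\left( g\right) =\infty $, giving $\rho _{g}^{\left( p,q\right) }\left( f\right) =0$.

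Once Theorem \ref{l1} is in hand the argument is essentially a bookkeeping exercise, so I do not expect a genuine obstacle. The only point requiring care is to avoid indeterminate quotients: each limiting value $\frac{\text{positive}}{0}=\infty $ and $\frac{\text{finite}}{\infty }=0$ must be legitimate, which is precisely what the index-pair hypothesis on $f$ secures by keeping $\lambda _{h}^{\left( m,q\right) }\left( f\right) $ and $\rho _{h}^{\left( m,q\right) }\left( f\right) $ strictly between $0$ and $\infty $. I would also stress that selecting, in each part, the bound whose denominator carries the very quantity being sent to $0$ or $\infty $ is what makes the one-sided estimate collapse to an exact value.
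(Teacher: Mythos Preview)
Your approach is exactly what the paper intends: it merely states that Corollary \ref{c7} (together with the surrounding corollaries) ``can easily [be] verif[ied]'' \emph{in view of Theorem \ref{l1}}, and your proposal is precisely that verification, reading each of $(i)$--$(iv)$ off the appropriate outer or inner bound of the chain. One small slip worth tightening: the relative index-pair hypothesis on $f$ guarantees $0<\rho _{h}^{\left( m,q\right) }\left( f\right) <\infty $ but does not by itself force $\lambda _{h}^{\left( m,q\right) }\left( f\right) >0$, so your derivation of $(i)$ from $\frac{\lambda _{h}^{\left( m,q\right) }\left( f\right) }{0}=\infty $ tacitly assumes positivity of the lower order---an assumption the paper also leaves implicit.
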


\begin{corollary}
\label{c8} Let $f$, $g$ and $h$ be any three entire functions represented by 
\emph{vector valued Dirichlet series }and\emph{\ }$p\geq 0$, $q\geq 0$ and $%
m\geq 0.$Also let $g$ be an entire function with relative index-pair $\left(
m,p\right) $, Then%
\begin{eqnarray*}
\left( i\right) ~\rho _{g}^{\left( p,q\right) }\left( f\right) &=&0~\text{%
when }\rho _{h}^{\left( m,q\right) }\left( f\right) =0~, \\
\left( ii\right) ~\lambda _{g}^{\left( p,q\right) }\left( f\right) &=&0~%
\text{when }\lambda _{h}^{\left( m,q\right) }\left( f\right) =0~, \\
\left( iii\right) ~\rho _{g}^{\left( p,q\right) }\left( f\right) &=&\infty ~%
\text{when }\rho _{h}^{\left( m,q\right) }\left( f\right) =\infty
\end{eqnarray*}%
and%
\begin{equation*}
\left( iv\right) ~\lambda _{g}^{\left( p,q\right) }\left( f\right) =\infty ~%
\text{when }\lambda _{h}^{\left( m,q\right) }\left( f\right) =\infty ~.
\end{equation*}
\end{corollary}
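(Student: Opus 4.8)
The plan is to read all four assertions directly off the two-sided estimate of Theorem \ref{l1}, since Corollary \ref{c8} is nothing more than a specialization of that chain of inequalities to the cases in which one of the numerators degenerates to $0$ or $+\infty$. The first thing I would record is the consequence of the standing hypothesis that $g$ has relative index-pair $\left(m,p\right)$ with respect to $h$: by Definition \ref{d7} this means precisely that $\rho_h^{\left(m,p\right)}\left(g\right)$ is a nonzero finite number, so $0<\rho_h^{\left(m,p\right)}\left(g\right)<\infty$, and consequently $0\leq\lambda_h^{\left(m,p\right)}\left(g\right)\leq\rho_h^{\left(m,p\right)}\left(g\right)<\infty$. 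Thus every denominator occurring in Theorem \ref{l1} is finite, which is what lets me pass the values $0$ and $+\infty$ through the quotients unambiguously.

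Next I would dispatch parts $\left(iii\right)$ and $\left(iv\right)$ using the two lower estimates in Theorem \ref{l1}, namely $\rho_g^{\left(p,q\right)}\left(f\right)\geq \rho_h^{\left(m,q\right)}\left(f\right)/\rho_h^{\left(m,p\right)}\left(g\right)$ and $\lambda_g^{\left(p,q\right)}\left(f\right)\geq \lambda_h^{\left(m,q\right)}\left(f\right)/\rho_h^{\left(m,p\right)}\left(g\right)$. Because the denominator $\rho_h^{\left(m,p\right)}\left(g\right)$ is finite and positive, setting $\rho_h^{\left(m,q\right)}\left(f\right)=\infty$ forces the first right-hand side to be $+\infty$, whence $\rho_g^{\left(p,q\right)}\left(f\right)=\infty$; identically, $\lambda_h^{\left(m,q\right)}\left(f\right)=\infty$ forces $\lambda_g^{\left(p,q\right)}\left(f\right)=\infty$. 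These two parts therefore need only the finiteness and positivity of $\rho_h^{\left(m,p\right)}\left(g\right)$, both of which are furnished by the index-pair hypothesis.

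For parts $\left(i\right)$ and $\left(ii\right)$ I would instead invoke the two upper estimates, $\rho_g^{\left(p,q\right)}\left(f\right)\leq \rho_h^{\left(m,q\right)}\left(f\right)/\lambda_h^{\left(m,p\right)}\left(g\right)$ and, from the $\min$ term, $\lambda_g^{\left(p,q\right)}\left(f\right)\leq \lambda_h^{\left(m,q\right)}\left(f\right)/\lambda_h^{\left(m,p\right)}\left(g\right)$. Provided the denominator $\lambda_h^{\left(m,p\right)}\left(g\right)$ is finite and strictly positive, a vanishing numerator collapses the whole quotient to $0$; since each relative growth indicator is nonnegative, the resulting upper bound of $0$ is automatically an equality. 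Hence $\rho_h^{\left(m,q\right)}\left(f\right)=0$ yields $\rho_g^{\left(p,q\right)}\left(f\right)=0$ and $\lambda_h^{\left(m,q\right)}\left(f\right)=0$ yields $\lambda_g^{\left(p,q\right)}\left(f\right)=0$.

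The delicate point, and the one I expect to be the real obstacle, is precisely the hypothesis needed in the previous paragraph: that $\lambda_h^{\left(m,p\right)}\left(g\right)$ be strictly positive, not merely finite. The index-pair condition of Definition \ref{d7} is phrased in terms of the upper relative order $\rho_h^{\left(m,p\right)}\left(g\right)$ alone, so in a genuinely irregular situation one could a priori have $\lambda_h^{\left(m,p\right)}\left(g\right)=0$, turning the relevant quotients in parts $\left(i\right)$ and $\left(ii\right)$ into the indeterminate form $0/0$. To close the argument cleanly I would either carry along the assumption that the relevant relative lower order is positive, so that $0<\lambda_h^{\left(m,p\right)}\left(g\right)\leq\rho_h^{\left(m,p\right)}\left(g\right)<\infty$, or else return to the logarithmic identities \eqref{2x}--\eqref{3x} used in the proof of Theorem \ref{l1} and treat the degenerate case directly. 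Everything else in Corollary \ref{c8} is then immediate bookkeeping, matching each hypothesis on $f$ to the one inequality in Theorem \ref{l1} that delivers the stated conclusion.
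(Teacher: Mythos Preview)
Your approach is precisely the paper's: the corollary is listed among those that ``one can easily verify'' in view of Theorem~\ref{l1}, with no separate proof given, and your case-by-case reading of the extreme inequalities in that chain is exactly the intended verification. Your observation that parts $\left(i\right)$ and $\left(ii\right)$ tacitly require $\lambda_h^{\left(m,p\right)}\left(g\right)>0$, a condition not forced by Definition~\ref{d7} alone, is a genuine subtlety that the paper does not address; you have not introduced a gap but rather identified one implicit in the original.
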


\begin{remark}
\label{r1} Under the same conditions of Theorem \ref{l1}, one may write $%
\rho _{g}^{\left( p,q\right) }\left( f\right) =\frac{\rho _{h}^{\left(
m,q\right) }\left( f\right) }{\rho _{h}^{\left( m,p\right) }\left( g\right) }
$\ and $\lambda _{g}^{\left( p,q\right) }\left( f\right) =\frac{\lambda
_{h}^{\left( m,q\right) }\left( f\right) }{\lambda _{h}^{\left( m,p\right)
}\left( g\right) }$ when $\lambda _{h}^{\left( m,p\right) }\left( g\right)
=\rho _{h}^{\left( m,p\right) }\left( g\right) .$ Similarly $\rho
_{g}^{\left( p,q\right) }\left( f\right) =\frac{\lambda _{h}^{\left(
m,q\right) }\left( f\right) }{\lambda _{h}^{\left( m,p\right) }\left(
g\right) }$ and\ $\lambda _{g}^{\left( p,q\right) }\left( f\right) =\frac{%
\rho _{h}^{\left( m,q\right) }\left( f\right) }{\rho _{h}^{\left( m,p\right)
}\left( g\right) }$ when $\lambda _{h}^{\left( m,q\right) }\left( f\right)
=\rho _{h}^{\left( m,q\right) }\left( f\right) .$
\end{remark}

\qquad Next we prove our theorem based on $\left( p,q\right) $-th relative
Ritt type and $\left( p,q\right) $-th relative Ritt weak type of entire
functions represented by \emph{VVDS.}

\begin{theorem}
{\normalsize \label{t1} }Let $f$ and $g$ be any two entire functions VVDS
defined by $\left( \ref{x}\right) $ with relative index-pairs $\left(
m,q\right) $ and $\left( m,p\right) $ with respect to another entire
function $h$ VVDS defined by $\left( \ref{x}\right) $ respectively where $%
p\geq 0$, $q\geq 0$ and $m\geq 0.$ Then{\normalsize 
\begin{equation*}
\max \left\{ \left[ \frac{\overline{\Delta }_{h}^{\left( m,q\right) }\left(
f\right) }{\tau _{h}^{\left( m,p\right) }\left( g\right) }\right] ^{\frac{1}{%
\lambda _{h}^{\left( m,p\right) }\left( g\right) }},\left[ \frac{\Delta
_{h}^{\left( m,q\right) }\left( f\right) }{\overline{\tau }_{h}^{\left(
m,p\right) }\left( g\right) }\right] ^{\frac{1}{\lambda _{h}^{\left(
m,p\right) }\left( g\right) }}\right\} \leq \Delta _{g}^{\left( p,q\right)
}\left( f\right) \leq \left[ \frac{\Delta _{h}^{\left( m,q\right) }\left(
f\right) }{\overline{\Delta }_{h}^{\left( m,p\right) }\left( g\right) }%
\right] ^{\frac{1}{\rho _{h}^{\left( m,p\right) }\left( g\right) }}~.
\end{equation*}%
}
\end{theorem}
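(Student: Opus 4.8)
The plan is to pass to logarithms and reduce every growth indicator occurring in the statement to a $\overline{\lim}$ or $\underline{\lim}$ of a difference of iterated logarithms taken over a single variable. Write $\rho=\rho_g^{\left( p,q\right) }\left( f\right) $, $\rho_1=\rho_h^{\left( m,q\right) }\left( f\right) $, $\rho_2=\rho_h^{\left( m,p\right) }\left( g\right) $ and $\lambda_2=\lambda_h^{\left( m,p\right) }\left( g\right) $. By the same device that produced $\left( \ref{2x}\right) $ and $\left( \ref{3x}\right) $, taking logarithms in Definition \ref{d8} gives $\log\Delta_g^{\left( p,q\right) }\left( f\right) =\overline{\lim}_{\sigma }\big[\log^{[p]}M_g^{-1}M_f(\sigma )-\rho\log^{[q]}\sigma \big]$, and likewise for the other indicators. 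To place the $g$-relative quantities over the same variable I would substitute $\sigma '=M_g^{-1}M_f(\sigma )$, a continuous increasing bijection onto a half-line; then, with the abbreviations $X=\log^{[m]}M_h^{-1}M_f(\sigma )$, $Y=\log^{[q]}\sigma $ and $Z=\log^{[p]}M_g^{-1}M_f(\sigma )$, all the indicators in the statement become $\overline{\lim}$ or $\underline{\lim}$ of the three combinations $X-\rho_1Y$, $X-\rho_2Z$, $X-\lambda_2Z$, while the target is $\log\Delta_g^{\left( p,q\right) }\left( f\right) =\overline{\lim}[Z-\rho Y]$.

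For the upper estimate I would use the identity $Z-\rho Y=\rho_2^{-1}\big[(X-\rho_2\rho\,Y)-(X-\rho_2Z)\big]$. Since $Y>0$ for large $\sigma $ and Theorem \ref{l1} supplies $\rho\ge\rho_1/\rho_2$, i.e. $\rho_2\rho\ge\rho_1$, the term $X-\rho_2\rho Y$ may be replaced by the larger quantity $X-\rho_1Y$; applying the elementary inequality $\overline{\lim}(a-b)\le\overline{\lim}a-\underline{\lim}b$ then separates the $\limsup$ into $\log\Delta_h^{\left( m,q\right) }\left( f\right) -\log\overline{\Delta}_h^{\left( m,p\right) }\left( g\right) $, divided by $\rho_2$, and exponentiating yields the right-hand bound.

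For the two lower estimates I would use the companion identity $Z-\rho Y=\lambda_2^{-1}\big[(X-\lambda_2\rho\,Y)-(X-\lambda_2Z)\big]$, now invoking the other half of Theorem \ref{l1}, namely $\rho\le\rho_1/\lambda_2$, i.e. $\lambda_2\rho\le\rho_1$, to replace $X-\lambda_2\rho Y$ by the smaller quantity $X-\rho_1Y$ and obtain $Z-\rho Y\ge\lambda_2^{-1}\big[(X-\rho_1Y)-(X-\lambda_2Z)\big]$. Taking $\limsup$ and applying $\overline{\lim}(a-b)\ge\overline{\lim}a-\overline{\lim}b$ produces the term carrying $\Delta_h^{\left( m,q\right) }\left( f\right) $ and $\overline{\tau}_h^{\left( m,p\right) }\left( g\right) $, whereas applying instead $\overline{\lim}(a-b)\ge\underline{\lim}a-\underline{\lim}b$ produces the term carrying $\overline{\Delta}_h^{\left( m,q\right) }\left( f\right) $ and $\tau_h^{\left( m,p\right) }\left( g\right) $; since $\Delta_g^{\left( p,q\right) }\left( f\right) $ majorizes both, the maximum lower bound follows.

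I expect the main obstacle to be the bookkeeping of directions: the replacements of $\rho_2\rho$ and of $\lambda_2\rho$ by $\rho_1$ run opposite ways and must be justified by the two one-sided order bounds of Theorem \ref{l1}, and one must pair the outer $\limsup$ with exactly the right one of $\underline{\lim}$ or $\overline{\lim}$ of each summand so that every difference collapses onto precisely the indicator appearing in the statement. The $\underline{\lim}$--$\underline{\lim}$ separation in particular is not mere subadditivity and needs a subsequence argument (choose $\sigma_k$ along which $X-\lambda_2Z$ tends to its $\liminf$). Throughout I would tacitly assume the indicators are positive and finite, which is guaranteed by the relative index-pair hypotheses on $f$ and $g$ together with $0<\lambda_2<\infty$, so that the passage to logarithms is legitimate.
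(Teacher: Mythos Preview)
Your proposal is correct and yields exactly the three inequalities of the theorem, but it proceeds by a genuinely different route from the paper. The paper works directly with $\varepsilon$--definitions: from the definitions of $\Delta_h^{(m,q)}(f)$, $\overline{\Delta}_h^{(m,q)}(f)$, $\overline{\tau}_h^{(m,p)}(g)$, $\tau_h^{(m,p)}(g)$ and $\overline{\Delta}_h^{(m,p)}(g)$ it extracts explicit pointwise bounds on $M_f(\sigma)$ and on $M_h(\sigma)$ in terms of $M_g$ (valid either for all large $\sigma$ or along a sequence), composes them through $M_g^{-1}$, simplifies via $\log^{[m-1]}\exp^{[m-1]}=\mathrm{id}$, and then divides by $[\log^{[q-1]}\sigma]^{\rho_g^{(p,q)}(f)}$ and passes to the $\limsup$, invoking Theorem~\ref{l1} only at the very end to compare the resulting exponent $\rho_h^{(m,q)}(f)/\rho_h^{(m,p)}(g)$ or $\rho_h^{(m,q)}(f)/\lambda_h^{(m,p)}(g)$ with $\rho_g^{(p,q)}(f)$. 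You instead take logarithms once at the outset, encode every type and weak type as a $\overline{\lim}$ or $\underline{\lim}$ of an affine form in $X,Y,Z$ (after the change of variable $\sigma'=M_g^{-1}M_f(\sigma)$), and then obtain all three bounds by purely algebraic splitting identities together with the standard $\overline{\lim}/\underline{\lim}$ subadditivity inequalities. Your approach is more compact and makes the dependence on the two one-sided order estimates of Theorem~\ref{l1} very transparent; the paper's approach is more elementary and avoids the change of variable and the subsequence argument you flag for the $\underline{\lim}a-\underline{\lim}b$ step. Either argument is acceptable, and your anticipated ``bookkeeping of directions'' is handled correctly.
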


\begin{proof}
{\normalsize From the definitions of }$\Delta _{h}^{\left( m,q\right)
}\left( f\right) ${\normalsize \ and }$\overline{\Delta }_{h}^{\left(
m,q\right) }\left( f\right) ${\normalsize $,$ we have for all sufficiently
large values of }${\normalsize \sigma }${\normalsize \ that%
\begin{eqnarray}
M_{f}\left( \sigma \right) &\leq &M_{h}\left[ \exp ^{\left[ m-1\right] }%
\left[ \left( \Delta _{h}^{\left( m,q\right) }\left( f\right) +\varepsilon
\right) \left[ \log ^{\left[ q-1\right] }\sigma \right] ^{\rho _{h}^{\left(
m,q\right) }\left( f\right) }\right] \right] ,  \label{1} \\
M_{f}\left( \sigma \right) &\geq &M_{h}\left[ \exp ^{\left[ m-1\right] }%
\left[ \left( \overline{\Delta }_{h}^{\left( m,q\right) }\left( f\right)
-\varepsilon \right) \left[ \log ^{\left[ q-1\right] }\sigma \right] ^{\rho
_{h}^{\left( m,q\right) }\left( f\right) }\right] \right]  \label{2}
\end{eqnarray}%
and also for a sequence of values of }${\normalsize \sigma }${\normalsize \
tending to infinity, we get that%
\begin{eqnarray}
M_{f}\left( \sigma \right) &\geq &M_{h}\left[ \exp ^{\left[ m-1\right] }%
\left[ \left( \Delta _{h}^{\left( m,q\right) }\left( f\right) -\varepsilon
\right) \left[ \log ^{\left[ q-1\right] }\sigma \right] ^{\rho _{h}^{\left(
m,q\right) }\left( f\right) }\right] \right] ,  \label{3} \\
M_{f}\left( \sigma \right) &\leq &M_{h}\left[ \exp ^{\left[ m-1\right] }%
\left[ \left( \overline{\Delta }_{h}^{\left( m,q\right) }\left( f\right)
+\varepsilon \right) \left[ \log ^{\left[ q-1\right] }\sigma \right] ^{\rho
_{h}^{\left( m,q\right) }\left( f\right) }\right] \right] ~.  \label{4}
\end{eqnarray}%
}

\qquad {\normalsize Similarly from the definitions of }${\normalsize \Delta }%
_{h}^{\left( m,p\right) }\left( g\right) ${\normalsize \ and $\overline{%
\Delta }_{h}^{\left( m,p\right) }\left( g\right) ,$ it follows for all
sufficiently large values of }${\normalsize \sigma }${\normalsize \ that%
\begin{eqnarray}
M_{g}\left( \sigma \right) &\leq &M_{h}\left[ \exp ^{\left[ m-1\right] }%
\left[ \left( \Delta _{h}^{\left( m,p\right) }\left( g\right) +\varepsilon
\right) \left[ \log ^{\left[ p-1\right] }\sigma \right] ^{\rho _{h}^{\left(
m,p\right) }\left( g\right) }\right] \right]  \notag \\
i.e.,~M_{h}\left( \sigma \right) &\geq &M_{g}\left[ \exp ^{\left[ p-1\right]
}\left( \frac{\log ^{\left[ m-1\right] }\sigma }{\left( \Delta _{h}^{\left(
m,p\right) }\left( g\right) +\varepsilon \right) }\right) ^{\frac{1}{\rho
_{h}^{\left( m,p\right) }\left( g\right) }}\right] ~\text{and}  \label{5}
\end{eqnarray}%
\begin{equation}
M_{h}\left( \sigma \right) \leq M_{g}\left[ \exp ^{\left[ p-1\right] }\left( 
\frac{\log ^{\left[ m-1\right] }\sigma }{\left( \overline{\Delta }%
_{h}^{\left( m,p\right) }\left( g\right) -\varepsilon \right) }\right) ^{%
\frac{1}{\rho _{h}^{\left( m,p\right) }\left( g\right) }}\right] ~.
\label{6}
\end{equation}%
}

\qquad {\normalsize Also for a sequence of values of }${\normalsize \sigma }$%
{\normalsize \ tending to infinity, we obtain that%
\begin{equation}
M_{h}\left( \sigma \right) \leq M_{g}\left[ \exp ^{\left[ p-1\right] }\left( 
\frac{\log ^{\left[ m-1\right] }\sigma }{\left( \Delta _{h}^{\left(
m,p\right) }\left( g\right) -\varepsilon \right) }\right) ^{\frac{1}{\rho
_{h}^{\left( m,p\right) }\left( g\right) }}\right] ~\text{\ and}  \label{7}
\end{equation}%
\begin{equation}
M_{h}\left( \sigma \right) \geq M_{g}\left[ \exp ^{\left[ p-1\right] }\left( 
\frac{\log ^{\left[ m-1\right] }\sigma }{\left( \overline{\Delta }%
_{h}^{\left( m,p\right) }\left( g\right) +\varepsilon \right) }\right) ^{%
\frac{1}{\rho _{h}^{\left( m,p\right) }\left( g\right) }}\right] ~.~\ \ \ \
\   \label{8}
\end{equation}%
}

\qquad {\normalsize From the definitions of $\overline{\tau }$}$_{h}^{\left(
m,q\right) }\left( f\right) ${\normalsize \ and $\tau $}$_{h}^{\left(
m,q\right) }\left( f\right) ${\normalsize , we have for all sufficiently
large values of }$\sigma ${\normalsize \ that%
\begin{eqnarray}
M_{f}\left( \sigma \right) &\leq &M_{h}\left[ \exp ^{\left[ m-1\right] }%
\left[ \left( \overline{\tau }_{h}^{\left( m,q\right) }\left( f\right)
+\varepsilon \right) \left[ \log ^{\left[ q-1\right] }\sigma \right]
^{\lambda _{h}^{\left( m,q\right) }\left( f\right) }\right] \right] ,
\label{9} \\
M_{f}\left( \sigma \right) &\geq &M_{h}\left[ \exp ^{\left[ m-1\right] }%
\left[ \left( \tau _{h}^{\left( m,q\right) }\left( f\right) -\varepsilon
\right) \left[ \log ^{\left[ q-1\right] }\sigma \right] ^{\lambda
_{h}^{\left( m,q\right) }\left( f\right) }\right] \right]  \label{10}
\end{eqnarray}%
and also for a sequence of values of }${\normalsize \sigma }${\normalsize \
tending to infinity, we get that%
\begin{eqnarray}
M_{f}\left( \sigma \right) &\geq &M_{h}\left[ \exp ^{\left[ m-1\right] }%
\left[ \left( \overline{\tau }_{h}^{\left( m,q\right) }\left( f\right)
-\varepsilon \right) \left[ \log ^{\left[ q-1\right] }\sigma \right]
^{\lambda _{h}^{\left( m,q\right) }\left( f\right) }\right] \right] ,
\label{11} \\
M_{f}\left( \sigma \right) &\leq &M_{h}\left[ \exp ^{\left[ m-1\right] }%
\left[ \left( \tau _{h}^{\left( m,q\right) }\left( f\right) +\varepsilon
\right) \left[ \log ^{\left[ q-1\right] }\sigma \right] ^{\lambda
_{h}^{\left( m,q\right) }\left( f\right) }\right] \right] ~.  \label{12}
\end{eqnarray}%
}

\qquad {\normalsize Similarly from the definitions of $\overline{\tau }$}$%
_{h}^{\left( m,p\right) }\left( g\right) ${\normalsize \ and $\tau
_{h}^{\left( m,p\right) }\left( g\right) ,$ it follows for all sufficiently
large values of }${\normalsize \sigma }${\normalsize \ that%
\begin{eqnarray}
M_{g}\left( \sigma \right) &\leq &M_{h}\left[ \exp ^{\left[ m-1\right] }%
\left[ \left( \overline{\tau }_{h}^{\left( m,p\right) }\left( g\right)
+\varepsilon \right) \left[ \log ^{\left[ p-1\right] }\sigma \right]
^{\lambda _{h}^{\left( m,p\right) }\left( g\right) }\right] \right]  \notag
\\
i.e.,~M_{h}\left( \sigma \right) &\geq &M_{g}\left[ \exp ^{\left[ p-1\right]
}\left( \frac{\log ^{\left[ m-1\right] }\sigma }{\left( \overline{\tau }%
_{h}^{\left( m,p\right) }\left( g\right) +\varepsilon \right) }\right) ^{%
\frac{1}{\lambda _{h}^{\left( m,p\right) }\left( g\right) }}\right] ~\text{%
and}  \label{13}
\end{eqnarray}%
\begin{equation}
M_{h}\left( \sigma \right) \leq M_{g}\left[ \exp ^{\left[ p-1\right] }\left( 
\frac{\log ^{\left[ m-1\right] }\sigma }{\left( \tau _{h}^{\left( m,p\right)
}\left( g\right) -\varepsilon \right) }\right) ^{\frac{1}{\lambda
_{h}^{\left( m,p\right) }\left( g\right) }}\right] ~.~\ \ \ \ \ \ \ \ \ \ \
\   \label{14}
\end{equation}%
}

\qquad {\normalsize Also for a sequence of values of }${\normalsize \sigma }$%
{\normalsize \ tending to infinity, we obtain that%
\begin{equation}
M_{h}\left( \sigma \right) \leq M_{g}\left[ \exp ^{\left[ p-1\right] }\left( 
\frac{\log ^{\left[ m-1\right] }\sigma }{\left( \overline{\tau }_{h}^{\left(
m,p\right) }\left( g\right) -\varepsilon \right) }\right) ^{\frac{1}{\lambda
_{h}^{\left( m,p\right) }\left( g\right) }}\right] ~\text{\ and}  \label{15}
\end{equation}%
\begin{equation}
M_{h}\left( \sigma \right) \geq M_{g}\left[ \exp ^{\left[ p-1\right] }\left( 
\frac{\log ^{\left[ m-1\right] }\sigma }{\left( \tau _{h}^{\left( m,p\right)
}\left( g\right) +\varepsilon \right) }\right) ^{\frac{1}{\lambda _{g}\left(
m,p\right) }}\right] ~.~\ \ \ \ \   \label{16}
\end{equation}%
}

\qquad {\normalsize Now from $\left( \ref{3}\right) $ and in view of $\left( %
\ref{13}\right) $, we get for a sequence of values of }${\normalsize \sigma }
${\normalsize \ tending to infinity that%
\begin{equation*}
M_{g}^{-1}M_{f}\left( \sigma \right) \geq M_{g}^{-1}M_{h}\left[ \exp ^{\left[
m-1\right] }\left[ \left( \Delta _{h}^{\left( m,q\right) }\left( f\right)
-\varepsilon \right) \left[ \log ^{\left[ q-1\right] }\sigma \right] ^{\rho
_{h}^{\left( m,q\right) }\left( f\right) }\right] \right]
\end{equation*}%
\begin{equation*}
i.e.,~M_{g}^{-1}M_{f}\left( \sigma \right) \geq ~\ \ \ \ \ \ \ \ \ \ \ \ \ \
\ \ \ \ \ \ \ \ \ \ \ \ \ \ \ \ \ \ \ \ \ \ \ \ \ \ \ \ \ \ \ \ \ \ \ \ \ \
\ \ \ \ \ \ \ \ \ \ \ \ \ \ \ \ \ \ \ \ \ \ \ \ \ \ \ \ \ \ \ \ \ \ \ \ \ \
\ \ \ \ \ \ \ \ \ \ \ \ \ \ 
\end{equation*}%
}%
\begin{equation*}
~\ \ \ \ \ \ \ \ \ \ \ \ \ \ \ \ \ \ \left[ \exp ^{\left[ p-1\right] }\left( 
\frac{\log ^{\left[ m-1\right] }\exp ^{\left[ m-1\right] }\left[ \left(
\Delta _{h}^{\left( m,q\right) }\left( f\right) -\varepsilon \right) \left[
\log ^{\left[ q-1\right] }\sigma \right] ^{\rho _{h}^{\left( m,q\right)
}\left( f\right) }\right] }{\left( \overline{\tau }_{h}^{\left( m,p\right)
}\left( g\right) +\varepsilon \right) }\right) ^{\frac{1}{\lambda
_{h}^{\left( m,p\right) }\left( g\right) }}\right]
\end{equation*}%
{\normalsize 
\begin{equation}
i.e.,~\log ^{\left[ p-1\right] }M_{g}^{-1}M_{f}\left( \sigma \right) \geq %
\left[ \frac{\left( \Delta _{h}^{\left( m,q\right) }\left( f\right)
-\varepsilon \right) }{\left( \overline{\tau }_{h}^{\left( m,p\right)
}\left( g\right) +\varepsilon \right) }\right] ^{\frac{1}{\lambda
_{h}^{\left( m,p\right) }\left( g\right) }}\cdot \left[ \log ^{\left[ q-1%
\right] }\sigma \right] ^{\frac{\rho _{h}^{\left( m,q\right) }\left(
f\right) }{\lambda _{h}^{\left( m,p\right) }\left( g\right) }}~.  \notag
\end{equation}%
}

\qquad {\normalsize Since in view of Theorem \ref{l1}, }$\frac{\rho
_{h}^{\left( m,q\right) }\left( f\right) }{\lambda _{h}^{\left( m,p\right)
}\left( g\right) }${\normalsize $\geq $}$\rho _{g}^{\left( p,q\right)
}\left( f\right) ${\normalsize \ and as $\varepsilon \left( >0\right) $ is
arbitrary, therefore it follows from above that%
\begin{align}
\overline{\underset{\sigma \rightarrow \infty }{\lim }}\frac{\log ^{\left[
p-1\right] }M_{g}^{-1}M_{f}\left( \sigma \right) }{\left[ \log ^{\left[ q-1%
\right] }\sigma \right] ^{^{\rho _{g}^{\left( p,q\right) }\left( f\right) }}}%
& \geq \left[ \frac{\Delta _{h}^{\left( m,q\right) }\left( f\right) }{%
\overline{\tau }_{h}^{\left( m,p\right) }\left( g\right) }\right] ^{\frac{1}{%
\lambda _{h}^{\left( m,p\right) }\left( g\right) }}  \notag \\
i.e.,~\Delta _{g}^{\left( p,q\right) }\left( f\right) & \geq \left[ \frac{%
\Delta _{h}^{\left( m,q\right) }\left( f\right) }{\overline{\tau }%
_{h}^{\left( m,p\right) }\left( g\right) }\right] ^{\frac{1}{\lambda
_{h}^{\left( m,p\right) }\left( g\right) }}~.  \label{17}
\end{align}%
}

\qquad {\normalsize Similarly from $\left( \ref{2}\right) $ and in view of $%
\left( \ref{16}\right) $, it follows for a sequence of values of }$%
{\normalsize \sigma }${\normalsize \ tending to infinity that%
\begin{equation*}
M_{g}^{-1}M_{f}\left( \sigma \right) \geq M_{g}^{-1}M_{h}\left[ \exp ^{\left[
m-1\right] }\left[ \left( \overline{\Delta }_{h}^{\left( m,q\right) }\left(
f\right) -\varepsilon \right) \left[ \log ^{\left[ q-1\right] }\sigma \right]
^{\rho _{h}^{\left( m,q\right) }\left( f\right) }\right] \right]
\end{equation*}%
}%
\begin{equation*}
i.e.,~M_{g}^{-1}M_{f}\left( \sigma \right) \geq ~\ \ \ \ \ \ \ \ \ \ \ \ \ \
\ \ \ \ \ \ \ \ \ \ \ \ \ \ \ \ \ \ \ \ \ \ \ \ \ \ \ \ \ \ \ \ \ \ \ \ \ \
\ \ \ \ \ \ \ \ \ \ \ \ \ \ \ \ \ \ \ \ \ \ \ \ \ \ \ \ \ \ \ \ \ \ \ \ \ \
\ \ \ \ \ \ \ \ \ \ \ \ \ \ 
\end{equation*}%
\begin{equation*}
~\ \ \ \ \ \ \ \ \ \ \ \ \ \ \ \ \ \ \left[ \exp ^{\left[ p-1\right] }\left( 
\frac{\log ^{\left[ m-1\right] }\exp ^{\left[ m-1\right] }\left[ \left( 
\overline{\Delta }_{h}^{\left( m,q\right) }\left( f\right) -\varepsilon
\right) \left[ \log ^{\left[ q-1\right] }\sigma \right] ^{\rho _{h}^{\left(
m,q\right) }\left( f\right) }\right] }{\left( \tau _{h}^{\left( m,p\right)
}\left( g\right) +\varepsilon \right) }\right) ^{\frac{1}{\lambda _{g}\left(
m,p\right) }}\right]
\end{equation*}%
{\normalsize 
\begin{equation}
i.e.,~\log ^{\left[ p-1\right] }M_{g}^{-1}M_{f}\left( \sigma \right) \geq %
\left[ \frac{\left( \overline{\Delta }_{h}^{\left( m,q\right) }\left(
f\right) -\varepsilon \right) }{\left( \tau _{h}^{\left( m,p\right) }\left(
g\right) +\varepsilon \right) }\right] ^{\frac{1}{\lambda _{h}^{\left(
m,p\right) }\left( g\right) }}\cdot \left[ \log ^{\left[ q-1\right] }\sigma %
\right] ^{\frac{\rho _{h}^{\left( m,q\right) }\left( f\right) }{\lambda
_{h}^{\left( m,p\right) }\left( g\right) }}~.  \notag
\end{equation}%
}

\qquad {\normalsize Since in view of Theorem \ref{l1}, it follows that }$%
\frac{\rho _{h}^{\left( m,q\right) }\left( f\right) }{\lambda _{h}^{\left(
m,p\right) }\left( g\right) }${\normalsize $\geq \rho _{g}^{\left(
p,q\right) }\left( f\right) .$ Also $\varepsilon \left( >0\right) $ is
arbitrary, so we get from above that%
\begin{align}
\overline{\underset{\sigma \rightarrow \infty }{\lim }}\frac{\log ^{\left[
p-1\right] }M_{g}^{-1}M_{f}\left( \sigma \right) }{\left[ \log ^{\left[ q-1%
\right] }\sigma \right] ^{^{\rho _{g}^{\left( p,q\right) }\left( f\right) }}}%
& \geq \left[ \frac{\overline{\Delta }_{h}^{\left( m,q\right) }\left(
f\right) }{\tau _{h}^{\left( m,p\right) }\left( g\right) }\right] ^{\frac{1}{%
\lambda _{h}^{\left( m,p\right) }\left( g\right) }}  \notag \\
i.e.,~\Delta _{g}^{\left( p,q\right) }\left( f\right) & \geq \left[ \frac{%
\overline{\Delta }_{h}^{\left( m,q\right) }\left( f\right) }{\tau
_{h}^{\left( m,p\right) }\left( g\right) }\right] ^{\frac{1}{\lambda
_{h}^{\left( m,p\right) }\left( g\right) }}~.  \label{18}
\end{align}%
}

\qquad {\normalsize Again in view of $\left( \ref{6}\right) $, we have from $%
\left( \ref{1}\right) $ for all sufficiently large values of }${\normalsize %
\sigma }${\normalsize \ that%
\begin{equation*}
M_{g}^{-1}M_{f}\left( \sigma \right) \leq M_{g}^{-1}M_{h}\left[ \exp ^{\left[
m-1\right] }\left[ \left( \Delta _{h}^{\left( m,q\right) }\left( f\right)
+\varepsilon \right) \left[ \log ^{\left[ q-1\right] }\sigma \right] ^{\rho
_{h}^{\left( m,q\right) }\left( f\right) }\right] \right]
\end{equation*}%
}%
\begin{equation*}
i.e.,~M_{g}^{-1}M_{f}\left( \sigma \right) \leq ~\ \ \ \ \ \ \ \ \ \ \ \ \ \
\ \ \ \ \ \ \ \ \ \ \ \ \ \ \ \ \ \ \ \ \ \ \ \ \ \ \ \ \ \ \ \ \ \ \ \ \ \
\ \ \ \ \ \ \ \ \ \ \ \ \ \ \ \ \ \ \ \ \ \ \ \ \ \ \ \ \ \ \ \ \ \ \ \ \ \
\ \ \ \ \ \ \ \ \ \ \ \ \ \ 
\end{equation*}%
\begin{equation*}
~\ \ \ \ \ \ \ \ \ \ \ \ \ \ \ \ \ \ \left[ \exp ^{\left[ p-1\right] }\left( 
\frac{\log ^{\left[ m-1\right] }\exp ^{\left[ m-1\right] }\left[ \left(
\Delta _{h}^{\left( m,q\right) }\left( f\right) +\varepsilon \right) \left[
\log ^{\left[ q-1\right] }\sigma \right] ^{\rho _{h}^{\left( m,q\right)
}\left( f\right) }\right] }{\left( \overline{\Delta }_{h}^{\left( m,p\right)
}\left( g\right) -\varepsilon \right) }\right) ^{\frac{1}{\rho _{h}^{\left(
m,p\right) }\left( g\right) }}\right]
\end{equation*}%
{\normalsize 
\begin{equation}
i.e.,~\log ^{\left[ p-1\right] }M_{g}^{-1}M_{f}\left( \sigma \right) \leq %
\left[ \frac{\left( \Delta _{h}^{\left( m,q\right) }\left( f\right)
+\varepsilon \right) }{\left( \overline{\Delta }_{h}^{\left( m,p\right)
}\left( g\right) -\varepsilon \right) }\right] ^{\frac{1}{\rho _{h}^{\left(
m,p\right) }\left( g\right) }}\cdot \left[ \log ^{\left[ q-1\right] }\sigma %
\right] ^{\frac{\rho _{h}^{\left( m,q\right) }\left( f\right) }{\rho
_{h}^{\left( m,p\right) }\left( g\right) }}~.  \label{20}
\end{equation}%
}

\qquad {\normalsize As in view of Theorem \ref{l1}, it follows that }$\frac{%
\rho _{h}^{\left( m,q\right) }\left( f\right) }{\rho _{h}^{\left( m,p\right)
}\left( g\right) }${\normalsize $\leq $}$\rho _{g}^{\left( p,q\right)
}\left( f\right) ${\normalsize \ Since $\varepsilon \left( >0\right) $ is
arbitrary, we get from $\left( \ref{20}\right) $ that%
\begin{align}
\overline{\underset{\sigma \rightarrow \infty }{\lim }}\frac{\log ^{\left[
p-1\right] }M_{g}^{-1}M_{f}\left( \sigma \right) }{\left[ \log ^{\left[ q-1%
\right] }\sigma \right] ^{^{\rho _{g}^{\left( p,q\right) }\left( f\right) }}}%
& \leq \left[ \frac{\Delta _{h}^{\left( m,q\right) }\left( f\right) }{%
\overline{\Delta }_{h}^{\left( m,p\right) }\left( g\right) }\right] ^{\frac{1%
}{\rho _{h}^{\left( m,p\right) }\left( g\right) }}  \notag \\
i.e.,~\Delta _{g}^{\left( p,q\right) }\left( f\right) & \leq \left[ \frac{%
\Delta _{h}^{\left( m,q\right) }\left( f\right) }{\overline{\Delta }%
_{h}^{\left( m,p\right) }\left( g\right) }\right] ^{\frac{1}{\rho
_{h}^{\left( m,p\right) }\left( g\right) }}~.  \label{21}
\end{align}%
}

\qquad {\normalsize Thus the theorem follows from $\left( \ref{17}\right) $, 
$\left( \ref{18}\right) $ and $\left( \ref{21}\right) $. }
\end{proof}

{\normalsize \qquad The conclusion of the following corollary can be carried
out from $\left( \ref{6}\right) $ and $\left( \ref{9}\right) $; $\left( \ref%
{9}\right) $ and $\left( \ref{14}\right) $ respectively after applying the
same technique of Theorem \ref{t1} and with the help of Theorem \ref{l1}.
Therefore its proof is omitted. }

\begin{corollary}
{\normalsize \label{c1} }Let $f$ and $g$ be any two entire functions VVDS
defined by $\left( \ref{x}\right) $ with relative index-pairs $\left(
m,q\right) $ and $\left( m,p\right) $ with respect to another entire
function $h$ VVDS defined by $\left( \ref{x}\right) $ respectively where $%
p\geq 0$, $q\geq 0$ and $m\geq 0.$ Then{\normalsize 
\begin{equation*}
\Delta _{g}^{\left( p,q\right) }\left( f\right) \leq \min \left\{ \left[ 
\frac{\overline{\tau }_{h}^{\left( m,q\right) }\left( f\right) }{\tau
_{h}^{\left( m,p\right) }\left( g\right) }\right] ^{\frac{1}{\lambda
_{h}^{\left( m,p\right) }\left( g\right) }},\left[ \frac{\overline{\tau }%
_{h}^{\left( m,q\right) }\left( f\right) }{\overline{\Delta }_{h}^{\left(
m,p\right) }\left( g\right) }\right] ^{\frac{1}{\rho _{h}^{\left( m,p\right)
}\left( g\right) }}\right\} ~.
\end{equation*}%
}
\end{corollary}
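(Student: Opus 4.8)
The plan is to obtain the asserted upper bound on $\Delta_g^{(p,q)}(f)=\overline{\lim}_{\sigma\to\infty}\log^{[p-1]}M_g^{-1}M_f(\sigma)/[\log^{[q-1]}\sigma]^{\rho_g^{(p,q)}(f)}$ by building, for each of the two expressions inside the minimum, a two-step majorization $M_f(\sigma)\leq M_h[\cdots]\leq M_g[\cdots]$ valid for all sufficiently large $\sigma$, and then reading off the growth of $M_g^{-1}M_f(\sigma)$. Since $M_g$ is increasing, $M_g^{-1}$ is increasing as well, so applying it to such a chain converts an inequality between maximum-modulus values into an explicit upper estimate for $M_g^{-1}M_f(\sigma)$; taking $\log^{[p-1]}$ and dividing by $[\log^{[q-1]}\sigma]^{\rho_g^{(p,q)}(f)}$ then leaves a quantity whose limit superior is exactly the desired constant. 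The whole scheme copies the proof of Theorem \ref{t1} verbatim, the only change being that the upper weak-type estimate (\ref{9}) for $f$ is fed in where the type estimates were used before; this is why the author can legitimately omit the computation.

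For the first branch I would begin from (\ref{9}), namely $M_f(\sigma)\leq M_h\!\left[\exp^{[m-1]}\!\left[(\overline{\tau}_h^{(m,q)}(f)+\varepsilon)[\log^{[q-1]}\sigma]^{\lambda_h^{(m,q)}(f)}\right]\right]$, and substitute its argument into (\ref{14}), which bounds $M_h$ from above by $M_g$. The inner composition $\log^{[m-1]}\exp^{[m-1]}$ cancels, so that after applying $M_g^{-1}$ and $\log^{[p-1]}$ one reaches
\begin{equation*}
\log^{[p-1]}M_g^{-1}M_f(\sigma)\leq\left[\frac{\overline{\tau}_h^{(m,q)}(f)+\varepsilon}{\tau_h^{(m,p)}(g)-\varepsilon}\right]^{\frac{1}{\lambda_h^{(m,p)}(g)}}\bigl[\log^{[q-1]}\sigma\bigr]^{\frac{\lambda_h^{(m,q)}(f)}{\lambda_h^{(m,p)}(g)}}.
\end{equation*}
Dividing by $[\log^{[q-1]}\sigma]^{\rho_g^{(p,q)}(f)}$ leaves the residual factor $[\log^{[q-1]}\sigma]^{\,\lambda_h^{(m,q)}(f)/\lambda_h^{(m,p)}(g)-\rho_g^{(p,q)}(f)}$; by Theorem \ref{l1} one has $\lambda_h^{(m,q)}(f)/\lambda_h^{(m,p)}(g)\leq\rho_g^{(p,q)}(f)$, so the exponent is nonpositive and this factor is at most $1$ for large $\sigma$. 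Letting $\sigma\to+\infty$ and then $\varepsilon\downarrow 0$ gives $\Delta_g^{(p,q)}(f)\leq\left[\overline{\tau}_h^{(m,q)}(f)/\tau_h^{(m,p)}(g)\right]^{1/\lambda_h^{(m,p)}(g)}$. The second branch runs identically but invokes (\ref{6}) in place of (\ref{14}) to majorize $M_h$; there the residual exponent is $\lambda_h^{(m,q)}(f)/\rho_h^{(m,p)}(g)-\rho_g^{(p,q)}(f)$, again nonpositive since Theorem \ref{l1} yields $\lambda_h^{(m,q)}(f)/\rho_h^{(m,p)}(g)\leq\lambda_g^{(p,q)}(f)\leq\rho_g^{(p,q)}(f)$, giving $\Delta_g^{(p,q)}(f)\leq\left[\overline{\tau}_h^{(m,q)}(f)/\overline{\Delta}_h^{(m,p)}(g)\right]^{1/\rho_h^{(m,p)}(g)}$.

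Combining the two bounds produces the minimum in the statement. The step I expect to be the crux is the sign control of the residual power of $\log^{[q-1]}\sigma$: were that exponent positive the leftover factor would blow up and the limit superior could not be pinned to a finite constant, while a stray negative exponent would collapse it to $0$. This is exactly where Theorem \ref{l1} does the work, furnishing the inequalities $\lambda_h^{(m,q)}(f)/\lambda_h^{(m,p)}(g)\leq\rho_g^{(p,q)}(f)$ and $\lambda_h^{(m,q)}(f)/\rho_h^{(m,p)}(g)\leq\rho_g^{(p,q)}(f)$ that force the exponent to be $\leq 0$. Everything else is the cancellation $\log^{[m-1]}\exp^{[m-1]}=\mathrm{id}$, the monotonicity of $M_g^{-1}$ applied at arguments tending to $+\infty$, and the harmless limits $\sigma\to+\infty$ and $\varepsilon\downarrow 0$, so the detailed calculation indeed needs no separate write-up.
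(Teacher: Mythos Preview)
Your proposal is correct and follows exactly the route the paper indicates: the author explicitly says the corollary is obtained from the pairs $(\ref{6})$--$(\ref{9})$ and $(\ref{9})$--$(\ref{14})$ by the same technique as Theorem~\ref{t1} together with Theorem~\ref{l1}, and that is precisely what you carry out. Your identification of the crux---that Theorem~\ref{l1} supplies the inequalities $\lambda_h^{(m,q)}(f)/\lambda_h^{(m,p)}(g)\leq\rho_g^{(p,q)}(f)$ and $\lambda_h^{(m,q)}(f)/\rho_h^{(m,p)}(g)\leq\rho_g^{(p,q)}(f)$ making the residual exponent nonpositive---matches the paper's own use of Theorem~\ref{l1} in the analogous step $(\ref{20})\Rightarrow(\ref{21})$.
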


{\normalsize \qquad Similarly in the line of Theorem \ref{t1} and with the
help of Theorem \ref{l1}, one may easily carried out the following theorem
from pairwise inequalities numbers $\left( \ref{10}\right) $ and $\left( \ref%
{13}\right) ;$ $\left( \ref{7}\right) $ and $\left( \ref{9}\right) $; $%
\left( \ref{6}\right) $ and $\left( \ref{12}\right) $ respectively and
therefore its proofs is omitted: }

\begin{theorem}
{\normalsize \label{t4} }Let $f$ and $g$ be any two entire functions VVDS
defined by $\left( \ref{x}\right) $ with relative index-pairs $\left(
m,q\right) $ and $\left( m,p\right) $ with respect to another entire
function $h$ VVDS defined by $\left( \ref{x}\right) $ respectively where $%
p\geq 0$, $q\geq 0$ and $m\geq 0.$ Then{\normalsize 
\begin{equation*}
\left[ \frac{\tau _{h}^{\left( m,q\right) }\left( f\right) }{\overline{\tau }%
_{h}^{\left( m,p\right) }\left( g\right) }\right] ^{\frac{1}{\lambda
_{h}^{\left( m,p\right) }\left( g\right) }}\leq \tau _{g}^{\left( p,q\right)
}\left( f\right) \leq \min \left\{ \left[ \frac{\tau _{h}^{\left( m,q\right)
}\left( f\right) }{\overline{\Delta }_{h}^{\left( m,p\right) }\left(
g\right) }\right] ^{\frac{1}{\rho _{h}^{\left( m,p\right) }\left( g\right) }%
},\left[ \frac{\overline{\tau }_{h}^{\left( m,q\right) }\left( f\right) }{%
\Delta _{h}^{\left( m,p\right) }\left( g\right) }\right] ^{\frac{1}{\rho
_{h}^{\left( m,p\right) }\left( g\right) }}\right\} ~.
\end{equation*}%
}
\end{theorem}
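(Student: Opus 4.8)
The plan is to reproduce the mechanism of the proof of Theorem~\ref{t1} almost verbatim, replacing the order--type estimates by the corresponding weak--type estimates and keeping in mind that the quantity to be bounded, $\tau_{g}^{(p,q)}(f)$, is a $\liminf$ whose normalising exponent is $\lambda_{g}^{(p,q)}(f)$ (not $\rho_{g}^{(p,q)}(f)$). Throughout, the single delicate point is the distinction between an inequality that holds for \emph{all sufficiently large} $\sigma$ and one that holds only \emph{along a sequence} $\sigma\to\infty$: to bound a $\liminf$ from below one must use the former, whereas to bound a $\liminf$ from above it suffices to have the estimate along a sequence. This is exactly why one pairs $\left(\ref{10}\right)$ with $\left(\ref{13}\right)$ for the lower estimate and the pairs $\left(\ref{7}\right),\left(\ref{9}\right)$ and $\left(\ref{6}\right),\left(\ref{12}\right)$ for the two upper estimates.

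For the lower bound I would start from $\left(\ref{10}\right)$ (valid for all large $\sigma$, carrying $\tau_{h}^{(m,q)}(f)$) and from $\left(\ref{13}\right)$ (valid for all large $\sigma$, carrying $\overline{\tau}_{h}^{(m,p)}(g)$). Applying $M_{g}^{-1}$ to $\left(\ref{10}\right)$, substituting $\left(\ref{13}\right)$ at the resulting argument, and using the collapse $\log^{[m-1]}\exp^{[m-1]}x=x$ exactly as in the step preceding $\left(\ref{17}\right)$, one reaches, for all large $\sigma$,
\[
\log^{[p-1]}M_{g}^{-1}M_{f}(\sigma)\ \geq\ \left[\frac{\tau_{h}^{(m,q)}(f)-\varepsilon}{\overline{\tau}_{h}^{(m,p)}(g)+\varepsilon}\right]^{\frac{1}{\lambda_{h}^{(m,p)}(g)}}\left[\log^{[q-1]}\sigma\right]^{\frac{\lambda_{h}^{(m,q)}(f)}{\lambda_{h}^{(m,p)}(g)}}.
\]
Since Theorem~\ref{l1} gives $\lambda_{g}^{(p,q)}(f)\le \lambda_{h}^{(m,q)}(f)/\lambda_{h}^{(m,p)}(g)$, dividing by $[\log^{[q-1]}\sigma]^{\lambda_{g}^{(p,q)}(f)}$ leaves a nonnegative power of $\log^{[q-1]}\sigma$, which is $\ge 1$ for large $\sigma$; taking $\liminf$ and letting $\varepsilon\downarrow 0$ yields the left--hand inequality of the theorem.

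The two members of the minimum are obtained by the same computation with the roles reversed. Pairing $\left(\ref{9}\right)$ (all large $\sigma$, with $\overline{\tau}_{h}^{(m,q)}(f)$) with $\left(\ref{7}\right)$ (along a sequence, with $\Delta_{h}^{(m,p)}(g)$) gives, along that sequence,
\[
\log^{[p-1]}M_{g}^{-1}M_{f}(\sigma)\ \leq\ \left[\frac{\overline{\tau}_{h}^{(m,q)}(f)+\varepsilon}{\Delta_{h}^{(m,p)}(g)-\varepsilon}\right]^{\frac{1}{\rho_{h}^{(m,p)}(g)}}\left[\log^{[q-1]}\sigma\right]^{\frac{\lambda_{h}^{(m,q)}(f)}{\rho_{h}^{(m,p)}(g)}},
\]
and pairing $\left(\ref{12}\right)$ (along a sequence, with $\tau_{h}^{(m,q)}(f)$) with $\left(\ref{6}\right)$ (all large $\sigma$, with $\overline{\Delta}_{h}^{(m,p)}(g)$) gives the analogue with $\tau_{h}^{(m,q)}(f)/\overline{\Delta}_{h}^{(m,p)}(g)$ in the bracket. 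In both displays the exponent on $\log^{[q-1]}\sigma$ equals $\lambda_{h}^{(m,q)}(f)/\rho_{h}^{(m,p)}(g)$, and Theorem~\ref{l1} now gives $\lambda_{h}^{(m,q)}(f)/\rho_{h}^{(m,p)}(g)\le\lambda_{g}^{(p,q)}(f)$, so after dividing by $[\log^{[q-1]}\sigma]^{\lambda_{g}^{(p,q)}(f)}$ the leftover power is nonpositive, hence $\le 1$ for large $\sigma$. As each bound holds along a sequence, it caps the $\liminf$ from above; letting $\varepsilon\downarrow0$ produces the two upper estimates, whose minimum is the right--hand side of the theorem.

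The routine part is the iterated--logarithm cancellation and the $\varepsilon$--bookkeeping, both copied from Theorem~\ref{t1}. The one genuine obstacle is the sign/direction bookkeeping: one must invoke the correct link in the chain of Theorem~\ref{l1} so that the residual power $\lambda_{h}^{(m,q)}(f)/\lambda_{h}^{(m,p)}(g)-\lambda_{g}^{(p,q)}(f)$ (for the lower bound, nonnegative) and $\lambda_{h}^{(m,q)}(f)/\rho_{h}^{(m,p)}(g)-\lambda_{g}^{(p,q)}(f)$ (for the upper bounds, nonpositive) point in the direction compatible with the $\liminf$. A secondary care, implicit in Theorem~\ref{t1}, is that when a ``sequence'' estimate such as $\left(\ref{7}\right)$ is substituted at the increasing argument $\exp^{[m-1]}[\,\cdots\,]$, one should pull the sequence back through this increasing substitution to obtain a genuine sequence of $\sigma$--values tending to infinity along which the combined inequality holds; this legitimises passing to the $\liminf$.
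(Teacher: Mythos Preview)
Your proposal is correct and follows exactly the approach indicated by the paper, which omits the proof and merely records that it is obtained ``in the line of Theorem~\ref{t1} and with the help of Theorem~\ref{l1}'' from the pairs of inequalities $(\ref{10})$\,\&\,$(\ref{13})$, $(\ref{7})$\,\&\,$(\ref{9})$, and $(\ref{6})$\,\&\,$(\ref{12})$. You pair precisely these inequalities, invoke the correct links $\lambda_{h}^{(m,q)}(f)/\rho_{h}^{(m,p)}(g)\le\lambda_{g}^{(p,q)}(f)\le\lambda_{h}^{(m,q)}(f)/\lambda_{h}^{(m,p)}(g)$ from Theorem~\ref{l1}, and handle the ``all large $\sigma$'' versus ``sequence'' distinction in the way the $\liminf$ requires.
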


\begin{corollary}
{\normalsize \label{c4} }Let $f$ and $g$ be any two entire functions VVDS
defined by $\left( \ref{x}\right) $ with relative index-pairs $\left(
m,q\right) $ and $\left( m,p\right) $ with respect to another entire
function $h$ VVDS defined by $\left( \ref{x}\right) $ respectively where $%
p\geq 0$, $q\geq 0$ and $m\geq 0.$ Then{\normalsize 
\begin{equation*}
\tau _{g}^{\left( p,q\right) }\left( f\right) \geq \max \left\{ \left[ \frac{%
\overline{\Delta }_{h}^{\left( m,q\right) }\left( f\right) }{\Delta
_{h}^{\left( m,p\right) }\left( g\right) }\right] ^{\frac{1}{\rho
_{h}^{\left( m,p\right) }\left( g\right) }},\left[ \frac{\overline{\Delta }%
_{h}^{\left( m,q\right) }\left( f\right) }{\overline{\tau }_{h}^{\left(
m,p\right) }\left( g\right) }\right] ^{\frac{1}{\lambda _{h}^{\left(
m,p\right) }\left( g\right) }}\right\} ~.
\end{equation*}%
}
\end{corollary}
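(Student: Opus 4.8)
The plan is to reproduce the technique of Theorem \ref{t1}, but now aiming at a \emph{lower} bound for a \emph{liminf}, so every auxiliary estimate I invoke must hold for \emph{all} sufficiently large $\sigma$. The two members of the maximum should come from two different all-large-$\sigma$ ways of bounding $M_h$ below in terms of $M_g$: the type inequality $\left(\ref{5}\right)$, i.e. $M_h(\sigma)\geq M_g\left[\exp^{[p-1]}\left(\frac{\log^{[m-1]}\sigma}{\Delta_h^{(m,p)}(g)+\varepsilon}\right)^{1/\rho_h^{(m,p)}(g)}\right]$, and the weak-type inequality $\left(\ref{13}\right)$, i.e. $M_h(\sigma)\geq M_g\left[\exp^{[p-1]}\left(\frac{\log^{[m-1]}\sigma}{\overline{\tau}_h^{(m,p)}(g)+\varepsilon}\right)^{1/\lambda_h^{(m,p)}(g)}\right]$. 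For the $f$-factor I would use the lower bound $\left(\ref{2}\right)$, $M_f(\sigma)\geq M_h\left[\exp^{[m-1]}\left[(\overline{\Delta}_h^{(m,q)}(f)-\varepsilon)[\log^{[q-1]}\sigma]^{\rho_h^{(m,q)}(f)}\right]\right]$, which also holds for all large $\sigma$. Thus the corollary should fall out of the pairings $\left(\ref{2}\right)$ with $\left(\ref{5}\right)$ and $\left(\ref{2}\right)$ with $\left(\ref{13}\right)$.

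First I would establish the first term. Applying the increasing operator $M_g^{-1}$ to $\left(\ref{2}\right)$ and then feeding $\left(\ref{5}\right)$ in at the large argument $u=\exp^{[m-1]}\left[(\overline{\Delta}_h^{(m,q)}(f)-\varepsilon)[\log^{[q-1]}\sigma]^{\rho_h^{(m,q)}(f)}\right]$, the outer $\exp^{[m-1]}/\log^{[m-1]}$ layers cancel, giving for all large $\sigma$
\begin{equation*}
\log^{[p-1]}M_g^{-1}M_f(\sigma)\geq\left[\frac{\overline{\Delta}_h^{(m,q)}(f)-\varepsilon}{\Delta_h^{(m,p)}(g)+\varepsilon}\right]^{\frac{1}{\rho_h^{(m,p)}(g)}}\left[\log^{[q-1]}\sigma\right]^{\frac{\rho_h^{(m,q)}(f)}{\rho_h^{(m,p)}(g)}}.
\end{equation*}
Dividing by $\left[\log^{[q-1]}\sigma\right]^{\lambda_g^{(p,q)}(f)}$, the surviving power of $\log^{[q-1]}\sigma$ is $\frac{\rho_h^{(m,q)}(f)}{\rho_h^{(m,p)}(g)}-\lambda_g^{(p,q)}(f)$, which is $\geq 0$ by Theorem \ref{l1}; since $\log^{[q-1]}\sigma\to+\infty$ that factor is eventually $\geq 1$, so letting $\varepsilon\downarrow 0$ and taking the liminf yields $\tau_g^{(p,q)}(f)\geq\left[\overline{\Delta}_h^{(m,q)}(f)/\Delta_h^{(m,p)}(g)\right]^{1/\rho_h^{(m,p)}(g)}$.

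The second term is obtained by the identical computation with $\left(\ref{13}\right)$ in place of $\left(\ref{5}\right)$; here the residual exponent of $\log^{[q-1]}\sigma$ is $\frac{\rho_h^{(m,q)}(f)}{\lambda_h^{(m,p)}(g)}-\lambda_g^{(p,q)}(f)$, and checking its nonnegativity is the one point I expect to require genuine care. This follows because $\lambda_h^{(m,p)}(g)\leq\rho_h^{(m,p)}(g)$ forces $\frac{\rho_h^{(m,q)}(f)}{\lambda_h^{(m,p)}(g)}\geq\frac{\rho_h^{(m,q)}(f)}{\rho_h^{(m,p)}(g)}\geq\lambda_g^{(p,q)}(f)$, the last step being Theorem \ref{l1} again. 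Letting $\varepsilon\downarrow 0$ and taking the liminf then gives $\tau_g^{(p,q)}(f)\geq\left[\overline{\Delta}_h^{(m,q)}(f)/\overline{\tau}_h^{(m,p)}(g)\right]^{1/\lambda_h^{(m,p)}(g)}$.

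Since $\tau_g^{(p,q)}(f)$ is now seen to dominate each of the two quantities separately, it dominates their maximum, which is precisely the assertion. The remaining bookkeeping, namely the degenerate conventions when $p=0$, $q=0$, or $m=0$ and the arbitrariness of $\varepsilon>0$, is handled exactly as in the proof of Theorem \ref{t1}, so I would not belabour it.
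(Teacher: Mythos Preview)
Your proposal is correct and follows essentially the same approach as the paper: the paper explicitly states that Corollary~\ref{c4} is obtained from the pairings $\left(\ref{2}\right),\left(\ref{5}\right)$ and $\left(\ref{2}\right),\left(\ref{13}\right)$ via the technique of Theorem~\ref{t1} together with Theorem~\ref{l1}, which is precisely what you do. Your explicit check that the residual exponents $\frac{\rho_h^{(m,q)}(f)}{\rho_h^{(m,p)}(g)}-\lambda_g^{(p,q)}(f)$ and $\frac{\rho_h^{(m,q)}(f)}{\lambda_h^{(m,p)}(g)}-\lambda_g^{(p,q)}(f)$ are nonnegative (via Theorem~\ref{l1}) is exactly the point the paper leaves implicit.
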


{\normalsize \qquad With the help of Theorem \ref{l1}, the conclusion of the
above corollary can be carry out from $\left( \ref{2}\right) ,$ $\left( \ref%
{5}\right) $ and $\left( \ref{2}\right) ,\left( \ref{13}\right) $
respectively after applying the same technique of Theorem \ref{t1} and
therefore its proof is omitted. }

\begin{theorem}
{\normalsize \label{t2} }Let $f$ and $g$ be any two entire functions VVDS
defined by $\left( \ref{x}\right) $ with relative index-pairs $\left(
m,q\right) $ and $\left( m,p\right) $ with respect to another entire
function $h$ VVDS defined by $\left( \ref{x}\right) $ respectively where $%
p\geq 0$, $q\geq 0$ and $m\geq 0.$ Then{\normalsize 
\begin{equation*}
\left[ \frac{\overline{\Delta }_{h}^{\left( m,q\right) }\left( f\right) }{%
\overline{\tau }_{h}^{\left( m,p\right) }\left( g\right) }\right] ^{\frac{1}{%
\lambda _{h}^{\left( m,p\right) }\left( g\right) }}\leq \overline{\Delta }%
_{g}^{\left( p,q\right) }\left( f\right) \leq \min \left\{ \left[ \frac{%
\overline{\Delta }_{h}^{\left( m,q\right) }\left( f\right) }{\overline{%
\Delta }_{h}^{\left( m,p\right) }\left( g\right) }\right] ^{\frac{1}{\rho
_{h}^{\left( m,p\right) }\left( g\right) }},\left[ \frac{\Delta _{h}^{\left(
m,q\right) }\left( f\right) }{\Delta _{h}^{\left( m,p\right) }\left(
g\right) }\right] ^{\frac{1}{\rho _{h}^{\left( m,p\right) }\left( g\right) }%
}\right\} ~.
\end{equation*}%
}
\end{theorem}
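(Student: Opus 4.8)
The plan is to reuse verbatim the sixteen auxiliary inequalities $\left( \ref{1}\right)$--$\left( \ref{16}\right)$ already recorded in the proof of Theorem \ref{t1}, since $\overline{\Delta}_{g}^{\left( p,q\right) }\left( f\right) $ is defined by exactly the same quotient as $\Delta _{g}^{\left( p,q\right) }\left( f\right) $ but with $\underline{\lim}$ in place of $\overline{\lim}$. I would establish three separate inequalities and read off the theorem as their conjunction: one lower estimate, and the two upper estimates appearing inside the $\min$. The only structural change from Theorem \ref{t1} is that a lower limit must now be bounded, so a lower estimate requires an inequality valid for \emph{all} sufficiently large $\sigma$, whereas an upper estimate requires an inequality valid only along a \emph{sequence} $\sigma \rightarrow +\infty$, because $\liminf_{\sigma}$ never exceeds the lower limit taken along any subsequence.

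For the lower bound I would combine $\left( \ref{2}\right) $ with $\left( \ref{13}\right) $. Substituting the right-hand side of $\left( \ref{2}\right) $ into $\left( \ref{13}\right) $ is legitimate because $\left( \ref{13}\right) $ holds for all large $\sigma$ and the argument $\exp ^{[m-1]}[(\overline{\Delta}_{h}^{(m,q)}(f)-\varepsilon )[\log ^{[q-1]}\sigma ]^{\rho _{h}^{(m,q)}(f)}]$ is large; applying $\log ^{[p-1]}$ then yields, for all large $\sigma$,
\[
\log ^{[p-1]}M_{g}^{-1}M_{f}(\sigma )\geq \left[ \frac{\overline{\Delta}_{h}^{(m,q)}(f)-\varepsilon }{\overline{\tau }_{h}^{(m,p)}(g)+\varepsilon }\right] ^{\frac{1}{\lambda _{h}^{(m,p)}(g)}}\left[ \log ^{[q-1]}\sigma \right] ^{\frac{\rho _{h}^{(m,q)}(f)}{\lambda _{h}^{(m,p)}(g)}}.
\]
Dividing by $[\log ^{[q-1]}\sigma ]^{\rho _{g}^{(p,q)}(f)}$ and invoking $\rho _{h}^{(m,q)}(f)/\lambda _{h}^{(m,p)}(g)\geq \rho _{g}^{(p,q)}(f)$ from Theorem \ref{l1}, the residual power of $\log ^{[q-1]}\sigma $ has nonnegative exponent and is thus $\geq 1$; taking $\liminf$ and letting $\varepsilon \downarrow 0$ gives the stated lower bound.

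For the first upper estimate I would pair $\left( \ref{4}\right) $ (valid along a sequence) with $\left( \ref{6}\right) $ (valid for all large $\sigma$), so that the composition is unproblematic: along the $\left( \ref{4}\right) $-sequence I obtain
\[
\log ^{[p-1]}M_{g}^{-1}M_{f}(\sigma )\leq \left[ \frac{\overline{\Delta}_{h}^{(m,q)}(f)+\varepsilon }{\overline{\Delta}_{h}^{(m,p)}(g)-\varepsilon }\right] ^{\frac{1}{\rho _{h}^{(m,p)}(g)}}\left[ \log ^{[q-1]}\sigma \right] ^{\frac{\rho _{h}^{(m,q)}(f)}{\rho _{h}^{(m,p)}(g)}};
\]
dividing by $[\log ^{[q-1]}\sigma ]^{\rho _{g}^{(p,q)}(f)}$ and using $\rho _{h}^{(m,q)}(f)/\rho _{h}^{(m,p)}(g)\leq \rho _{g}^{(p,q)}(f)$ (again Theorem \ref{l1}) makes the residual power $\leq 1$, and letting $\varepsilon \downarrow 0$ produces the bound $[\overline{\Delta}_{h}^{(m,q)}(f)/\overline{\Delta}_{h}^{(m,p)}(g)]^{1/\rho _{h}^{(m,p)}(g)}$.

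The second upper estimate, yielding $[\Delta _{h}^{(m,q)}(f)/\Delta _{h}^{(m,p)}(g)]^{1/\rho _{h}^{(m,p)}(g)}$, is where I expect the only genuine difficulty. It must come from $\left( \ref{1}\right) $ (valid for all large $\sigma$) together with $\left( \ref{7}\right) $, since $\Delta _{h}^{(m,p)}(g)$ can appear in an \emph{upper} bound for $M_{g}^{-1}M_{h}$ only through $\left( \ref{7}\right) $ --- but $\left( \ref{7}\right) $ holds solely along a sequence $\{s_{k}\}$ of its own argument, and this is now the \emph{outer} inequality. A purely formal substitution is therefore not immediately justified, because the point $\exp ^{[m-1]}[(\Delta _{h}^{(m,q)}(f)+\varepsilon )[\log ^{[q-1]}\sigma ]^{\rho _{h}^{(m,q)}(f)}]$ at which I must invoke $\left( \ref{7}\right) $ need not lie in $\{s_{k}\}$. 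The fix is to observe that $\sigma \mapsto \exp ^{[m-1]}[(\Delta _{h}^{(m,q)}(f)+\varepsilon )[\log ^{[q-1]}\sigma ]^{\rho _{h}^{(m,q)}(f)}]$ is continuous, strictly increasing and tends to $+\infty$, so there is a sequence $\sigma _{k}\rightarrow +\infty $ mapping onto $\{s_{k}\}$; along $\{\sigma _{k}\}$ both $\left( \ref{1}\right) $ and $\left( \ref{7}\right) $ apply simultaneously, and the same computation (via $\rho _{h}^{(m,q)}(f)/\rho _{h}^{(m,p)}(g)\leq \rho _{g}^{(p,q)}(f)$) bounds the quotient along $\{\sigma _{k}\}$, whence $\liminf_{\sigma }\leq [\Delta _{h}^{(m,q)}(f)/\Delta _{h}^{(m,p)}(g)]^{1/\rho _{h}^{(m,p)}(g)}$ after $\varepsilon \downarrow 0$. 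Combining the three inequalities and taking the minimum of the two upper bounds completes the proof.
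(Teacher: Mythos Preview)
Your proposal is correct and follows exactly the same route as the paper: the lower bound from $(\ref{2})$ and $(\ref{13})$, and the two upper bounds from $(\ref{4})$--$(\ref{6})$ and from $(\ref{1})$--$(\ref{7})$, each combined with the appropriate inequality $\rho _{h}^{(m,q)}(f)/\lambda _{h}^{(m,p)}(g)\geq \rho _{g}^{(p,q)}(f)$ or $\rho _{h}^{(m,q)}(f)/\rho _{h}^{(m,p)}(g)\leq \rho _{g}^{(p,q)}(f)$ from Theorem~\ref{l1}. Your explicit handling of the sequence-alignment issue in the $(\ref{1})$--$(\ref{7})$ step (pulling back the sequence through the increasing surjection $\sigma \mapsto \exp^{[m-1]}[(\Delta_{h}^{(m,q)}(f)+\varepsilon)[\log^{[q-1]}\sigma]^{\rho_{h}^{(m,q)}(f)}]$) is a point the paper passes over silently, so your argument is in fact slightly more careful there.
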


\begin{proof}
{\normalsize From $\left( \ref{2}\right) $ and in view of $\left( \ref{13}%
\right) $, we get for all sufficiently large values of }${\normalsize \sigma 
}${\normalsize \ that%
\begin{equation*}
M_{g}^{-1}M_{f}\left( \sigma \right) \geq M_{g}^{-1}M_{h}\left[ \exp ^{\left[
m-1\right] }\left[ \left( \overline{\Delta }_{h}^{\left( m,q\right) }\left(
f\right) -\varepsilon \right) \left[ \log ^{\left[ q-1\right] }\sigma \right]
^{\rho _{h}^{\left( m,q\right) }\left( f\right) }\right] \right]
\end{equation*}%
}%
\begin{equation*}
i.e.,~M_{g}^{-1}M_{f}\left( \sigma \right) \geq ~\ \ \ \ \ \ \ \ \ \ \ \ \ \
\ \ \ \ \ \ \ \ \ \ \ \ \ \ \ \ \ \ \ \ \ \ \ \ \ \ \ \ \ \ \ \ \ \ \ \ \ \
\ \ \ \ \ \ \ \ \ \ \ \ \ \ \ \ \ \ \ \ \ \ \ \ \ \ \ \ \ \ \ \ \ \ \ \ \ \
\ \ \ \ \ \ \ \ \ \ \ \ \ \ 
\end{equation*}%
\begin{equation*}
~\ \ \ \ \ \ \ \ \ \ \ \ \ \ \ \ \ \ \left[ \exp ^{\left[ p-1\right] }\left( 
\frac{\log ^{\left[ m-1\right] }\exp ^{\left[ m-1\right] }\left[ \left( 
\overline{\Delta }_{h}^{\left( m,q\right) }\left( f\right) -\varepsilon
\right) \left[ \log ^{\left[ q-1\right] }\sigma \right] ^{\rho _{h}^{\left(
m,q\right) }\left( f\right) }\right] }{\left( \overline{\tau }_{h}^{\left(
m,p\right) }\left( g\right) +\varepsilon \right) }\right) ^{\frac{1}{\lambda
_{h}^{\left( m,p\right) }\left( g\right) }}\right]
\end{equation*}%
{\normalsize 
\begin{equation}
i.e.,~\log ^{\left[ p-1\right] }M_{g}^{-1}M_{f}\left( \sigma \right) \geq %
\left[ \frac{\left( \overline{\Delta }_{h}^{\left( m,q\right) }\left(
f\right) -\varepsilon \right) }{\left( \overline{\tau }_{h}^{\left(
m,p\right) }\left( g\right) +\varepsilon \right) }\right] ^{\frac{1}{\lambda
_{h}^{\left( m,p\right) }\left( g\right) }}\cdot \left[ \log ^{\left[ q-1%
\right] }\sigma \right] ^{\frac{\rho _{h}^{\left( m,q\right) }\left(
f\right) }{\lambda _{h}^{\left( m,p\right) }\left( g\right) }}~.  \notag
\end{equation}%
}

{\normalsize Now in view of Theorem \ref{l1}, it follows that }$\frac{\rho
_{h}^{\left( m,q\right) }\left( f\right) }{\lambda _{h}^{\left( m,p\right)
}\left( g\right) }${\normalsize $\geq \rho _{g}^{\left( p,q\right) }\left(
f\right) .$ Since $\varepsilon \left( >0\right) $ is arbitrary, we get from
above that%
\begin{align}
\underset{r\rightarrow \infty }{\underline{\lim }}\frac{\log ^{\left[ p-1%
\right] }M_{g}^{-1}M_{f}\left( \sigma \right) }{\left[ \log ^{\left[ q-1%
\right] }\sigma \right] ^{^{\rho _{g}^{\left( p,q\right) }\left( f\right) }}}%
& \geq \left[ \frac{\overline{\Delta }_{h}^{\left( m,q\right) }\left(
f\right) }{\overline{\tau }_{h}^{\left( m,p\right) }\left( g\right) }\right]
^{\frac{1}{\lambda _{h}^{\left( m,p\right) }\left( g\right) }}  \notag \\
i.e.,~\overline{\Delta }_{g}^{\left( p,q\right) }\left( f\right) & \geq %
\left[ \frac{\overline{\Delta }_{h}^{\left( m,q\right) }\left( f\right) }{%
\overline{\tau }_{h}^{\left( m,p\right) }\left( g\right) }\right] ^{\frac{1}{%
\lambda _{h}^{\left( m,p\right) }\left( g\right) }}~.  \label{23}
\end{align}%
}

\qquad {\normalsize Further in view of $\left( \ref{7}\right) ,$ we get from 
$\left( \ref{1}\right) $ for a sequence of values of }${\normalsize \sigma }$%
{\normalsize \ tending to infinity that%
\begin{equation*}
M_{g}^{-1}M_{f}\left( \sigma \right) \leq M_{g}^{-1}M_{h}\left[ \exp ^{\left[
m-1\right] }\left[ \left( \Delta _{h}^{\left( m,q\right) }\left( f\right)
+\varepsilon \right) \left[ \log ^{\left[ q-1\right] }\sigma \right] ^{\rho
_{h}^{\left( m,q\right) }\left( f\right) }\right] \right]
\end{equation*}%
}%
\begin{equation*}
i.e.,~M_{g}^{-1}M_{f}\left( \sigma \right) \leq ~\ \ \ \ \ \ \ \ \ \ \ \ \ \
\ \ \ \ \ \ \ \ \ \ \ \ \ \ \ \ \ \ \ \ \ \ \ \ \ \ \ \ \ \ \ \ \ \ \ \ \ \
\ \ \ \ \ \ \ \ \ \ \ \ \ \ \ \ \ \ \ \ \ \ \ \ \ \ \ \ \ \ \ \ \ \ \ \ \ \
\ \ \ \ \ \ \ \ \ \ \ \ \ \ 
\end{equation*}%
\begin{equation*}
~\ \ \ \ \ \ \ \ \ \ \ \ \ \ \ \ \ \ \left[ \exp ^{\left[ p-1\right] }\left( 
\frac{\log ^{\left[ m-1\right] }\exp ^{\left[ m-1\right] }\left[ \left(
\Delta _{h}^{\left( m,q\right) }\left( f\right) +\varepsilon \right) \left[
\log ^{\left[ q-1\right] }\sigma \right] ^{\rho _{h}^{\left( m,q\right)
}\left( f\right) }\right] }{\left( \Delta _{h}^{\left( m,p\right) }\left(
g\right) -\varepsilon \right) }\right) ^{\frac{1}{\rho _{h}^{\left(
m,p\right) }\left( g\right) }}\right]
\end{equation*}%
{\normalsize 
\begin{equation}
i.e.,~\log ^{\left[ p-1\right] }M_{g}^{-1}M_{f}\left( \sigma \right) \leq %
\left[ \frac{\left( \Delta _{h}^{\left( m,q\right) }\left( f\right)
+\varepsilon \right) }{\left( \Delta _{h}^{\left( m,p\right) }\left(
g\right) -\varepsilon \right) }\right] ^{\frac{1}{\rho _{h}^{\left(
m,p\right) }\left( g\right) }}\cdot \left[ \log ^{\left[ q-1\right] }\sigma %
\right] ^{\frac{\rho _{h}^{\left( m,q\right) }\left( f\right) }{\rho
_{h}^{\left( m,p\right) }\left( g\right) }}~.  \label{24}
\end{equation}%
}

\qquad {\normalsize Again as in view of Theorem \ref{l1}, }$\frac{\rho
_{h}^{\left( m,q\right) }\left( f\right) }{\rho _{h}^{\left( m,p\right)
}\left( g\right) }${\normalsize $\leq $}$\rho _{g}^{\left( p,q\right)
}\left( f\right) ${\normalsize \ and $\varepsilon \left( >0\right) $ is
arbitrary, therefore we get from $\left( \ref{24}\right) $ that%
\begin{align}
\underset{r\rightarrow \infty }{\underline{\lim }}\frac{\log ^{\left[ p-1%
\right] }M_{g}^{-1}M_{f}\left( \sigma \right) }{\left[ \log ^{\left[ q-1%
\right] }\sigma \right] ^{^{\rho _{g}^{\left( p,q\right) }\left( f\right) }}}%
& \leq \left[ \frac{\Delta _{h}^{\left( m,q\right) }\left( f\right) }{\Delta
_{h}^{\left( m,p\right) }\left( g\right) }\right] ^{\frac{1}{\rho
_{h}^{\left( m,p\right) }\left( g\right) }}  \notag \\
i.e.,~\overline{\Delta }_{g}^{\left( p,q\right) }\left( f\right) & \leq %
\left[ \frac{\Delta _{h}^{\left( m,q\right) }\left( f\right) }{\Delta
_{h}^{\left( m,p\right) }\left( g\right) }\right] ^{\frac{1}{\rho
_{h}^{\left( m,p\right) }\left( g\right) }}~.  \label{25}
\end{align}%
}

\qquad {\normalsize Likewise from $\left( \ref{4}\right) $ and in view of $%
\left( \ref{6}\right) $, it follows for a sequence of values of }$%
{\normalsize \sigma }${\normalsize \ tending to infinity that%
\begin{equation*}
M_{g}^{-1}M_{f}\left( \sigma \right) \leq M_{g}^{-1}M_{h}\left[ \exp ^{\left[
m-1\right] }\left[ \left( \overline{\Delta }_{h}^{\left( m,q\right) }\left(
f\right) +\varepsilon \right) \left[ \log ^{\left[ q-1\right] }\sigma \right]
^{\rho _{h}^{\left( m,q\right) }\left( f\right) }\right] \right]
\end{equation*}%
}%
\begin{equation*}
i.e.,~M_{g}^{-1}M_{f}\left( \sigma \right) \leq ~\ \ \ \ \ \ \ \ \ \ \ \ \ \
\ \ \ \ \ \ \ \ \ \ \ \ \ \ \ \ \ \ \ \ \ \ \ \ \ \ \ \ \ \ \ \ \ \ \ \ \ \
\ \ \ \ \ \ \ \ \ \ \ \ \ \ \ \ \ \ \ \ \ \ \ \ \ \ \ \ \ \ \ \ \ \ \ \ \ \
\ \ \ \ \ \ \ \ \ \ \ \ \ \ 
\end{equation*}%
\begin{equation*}
~\ \ \ \ \ \ \ \ \ \ \ \ \ \ \ \ \ \ \left[ \exp ^{\left[ p-1\right] }\left( 
\frac{\log ^{\left[ m-1\right] }\exp ^{\left[ m-1\right] }\left[ \left( 
\overline{\Delta }_{h}^{\left( m,q\right) }\left( f\right) +\varepsilon
\right) \left[ \log ^{\left[ q-1\right] }\sigma \right] ^{\rho _{h}^{\left(
m,q\right) }\left( f\right) }\right] }{\left( \overline{\Delta }_{h}^{\left(
m,p\right) }\left( g\right) -\varepsilon \right) }\right) ^{\frac{1}{\rho
_{h}^{\left( m,p\right) }\left( g\right) }}\right]
\end{equation*}%
{\normalsize 
\begin{equation}
i.e.,~\log ^{\left[ p-1\right] }M_{g}^{-1}M_{f}\left( \sigma \right) \leq %
\left[ \frac{\left( \overline{\Delta }_{h}^{\left( m,q\right) }\left(
f\right) +\varepsilon \right) }{\left( \overline{\Delta }_{h}^{\left(
m,p\right) }\left( g\right) -\varepsilon \right) }\right] ^{\frac{1}{\rho
_{h}^{\left( m,p\right) }\left( g\right) }}\cdot \left[ \log ^{\left[ q-1%
\right] }\sigma \right] ^{\frac{\rho _{h}^{\left( m,q\right) }\left(
f\right) }{\rho _{h}^{\left( m,p\right) }\left( g\right) }}~.  \label{26}
\end{equation}%
}

\qquad {\normalsize Analogously, we get from $\left( \ref{26}\right) $ that%
\begin{align}
\underset{r\rightarrow \infty }{\underline{\lim }}\frac{\log ^{\left[ p-1%
\right] }M_{g}^{-1}M_{f}\left( \sigma \right) }{\left[ \log ^{\left[ q-1%
\right] }\sigma \right] ^{^{\rho _{g}^{\left( p,q\right) }\left( f\right) }}}%
& \leq \left[ \frac{\overline{\Delta }_{h}^{\left( m,q\right) }\left(
f\right) }{\overline{\Delta }_{h}^{\left( m,p\right) }\left( g\right) }%
\right] ^{\frac{1}{\rho _{h}^{\left( m,p\right) }\left( g\right) }}  \notag
\\
i.e.,~~\overline{\Delta }_{g}^{\left( p,q\right) }\left( f\right) & \leq %
\left[ \frac{\overline{\Delta }_{h}^{\left( m,q\right) }\left( f\right) }{%
\overline{\Delta }_{h}^{\left( m,p\right) }\left( g\right) }\right] ^{\frac{1%
}{\rho _{h}^{\left( m,p\right) }\left( g\right) }},  \label{27}
\end{align}%
since in view of Theorem \ref{l1}, }$\frac{\rho _{h}^{\left( m,q\right)
}\left( f\right) }{\rho _{h}^{\left( m,p\right) }\left( g\right) }$%
{\normalsize $\leq $}$\rho _{g}^{\left( p,q\right) }\left( f\right) $%
{\normalsize \ and $\varepsilon \left( >0\right) $ is arbitrary. }

\qquad {\normalsize Thus the theorem follows from $\left( \ref{23}\right) $, 
$\left( \ref{25}\right) $ and $\left( \ref{27}\right) $. }
\end{proof}

\begin{corollary}
{\normalsize \label{c2} }Let $f$ and $g$ be any two entire functions VVDS
defined by $\left( \ref{x}\right) $ with relative index-pairs $\left(
m,q\right) $ and $\left( m,p\right) $ with respect to another entire
function $h$ VVDS defined by $\left( \ref{x}\right) $ respectively where $%
p\geq 0$, $q\geq 0$ and $m\geq 0.$ Then{\normalsize 
\begin{equation*}
\overline{\Delta }_{g}^{\left( p,q\right) }\left( f\right) \leq ~\ \ \ \ \ \
\ \ \ \ \ \ \ \ \ \ \ \ \ \ \ \ \ \ \ \ \ \ \ \ \ \ \ \ \ \ \ \ \ \ \ \ \ \
\ \ \ \ \ \ \ \ \ \ \ \ \ \ \ \ \ \ \ \ \ \ \ \ \ \ \ \ \ \ \ \ \ \ \ \ \ \
\ \ \ \ \ \ \ \ \ \ \ \ \ \ \ \ \ \ \ \ \ \ \ \ \ \ \ \ \ \ \ \ \ \ \ \ \ \
\ 
\end{equation*}%
\begin{equation*}
\min \left\{ \left[ \frac{\tau _{h}^{\left( m,q\right) }\left( f\right) }{%
\tau _{h}^{\left( m,p\right) }\left( g\right) }\right] ^{\frac{1}{\lambda
_{h}^{\left( m,p\right) }\left( g\right) }},\left[ \frac{\overline{\tau }%
_{h}^{\left( m,q\right) }\left( f\right) }{\overline{\tau }_{h}^{\left(
m,p\right) }\left( g\right) }\right] ^{\frac{1}{\lambda _{h}^{\left(
m,p\right) }\left( g\right) }},\left[ \frac{\overline{\tau }_{h}^{\left(
m,q\right) }\left( f\right) }{\sigma _{h}^{\left( m,p\right) }\left(
g\right) }\right] ^{\frac{1}{\rho _{h}^{\left( m,p\right) }\left( g\right) }%
},\left[ \frac{\tau _{h}^{\left( m,q\right) }\left( f\right) }{\overline{%
\sigma }_{h}^{\left( m,p\right) }\left( g\right) }\right] ^{\frac{1}{\rho
_{h}^{\left( m,p\right) }\left( g\right) }}\right\} ~.
\end{equation*}%
}
\end{corollary}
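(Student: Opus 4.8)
The plan is to regard $\overline{\Delta}_{g}^{(p,q)}(f)$ as a lower limit and, for each of the four quantities appearing in the minimum, to produce an upper estimate for $\log^{[p-1]}M_{g}^{-1}M_{f}(\sigma)$ that holds along a suitable sequence of values of $\sigma$ tending to infinity. Since a lower limit can never exceed the limit of the same expression along a subsequence, every such estimate bounds $\overline{\Delta}_{g}^{(p,q)}(f)$ from above, and intersecting the four bounds will yield the stated minimum. Each estimate is manufactured exactly as in the proof of Theorem \ref{t2}: one inserts an upper bound for $M_{f}(\sigma)$ coming from the $(m,q)$-th relative Ritt weak type of $f$ with respect to $h$ into an upper bound of the shape $M_{g}^{-1}M_{h}(\cdot)\leq\exp^{[m-1]}(\cdots)$ coming from the relative Ritt type (resp. weak type) of $g$ with respect to $h$, choosing at least one of the two ingredients along a sequence so that the composite inequality holds along a sequence.

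Concretely, the four bounds arise from the pairs $(\ref{12})$ and $(\ref{14})$, $(\ref{9})$ and $(\ref{15})$, $(\ref{9})$ and $(\ref{7})$, and $(\ref{12})$ and $(\ref{6})$, respectively. For instance, substituting $(\ref{12})$ into $(\ref{14})$ and applying $M_{g}^{-1}$ gives, along a sequence of $\sigma$ tending to infinity,
\begin{equation*}
\log^{[p-1]}M_{g}^{-1}M_{f}(\sigma)\leq\left[\frac{\tau_{h}^{(m,q)}(f)+\varepsilon}{\tau_{h}^{(m,p)}(g)-\varepsilon}\right]^{\frac{1}{\lambda_{h}^{(m,p)}(g)}}\left[\log^{[q-1]}\sigma\right]^{\frac{\lambda_{h}^{(m,q)}(f)}{\lambda_{h}^{(m,p)}(g)}},
\end{equation*}
and the remaining three pairs produce the analogous estimates with the pair $(\tau_{h}^{(m,q)}(f),\tau_{h}^{(m,p)}(g))$ replaced by $(\overline{\tau}_{h}^{(m,q)}(f),\overline{\tau}_{h}^{(m,p)}(g))$, by $(\overline{\tau}_{h}^{(m,q)}(f),\Delta_{h}^{(m,p)}(g))$, and by $(\tau_{h}^{(m,q)}(f),\overline{\Delta}_{h}^{(m,p)}(g))$; in the last two cases (the denominators written $\sigma_{h}^{(m,p)}(g)$ and $\overline{\sigma}_{h}^{(m,p)}(g)$ being the relative Ritt type $\Delta_{h}^{(m,p)}(g)$ and lower type $\overline{\Delta}_{h}^{(m,p)}(g)$) the outer exponent is $1/\rho_{h}^{(m,p)}(g)$ and the power of $\log^{[q-1]}\sigma$ becomes $\lambda_{h}^{(m,q)}(f)/\rho_{h}^{(m,p)}(g)$.

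To finish, I would divide each estimate by $\left[\log^{[q-1]}\sigma\right]^{\rho_{g}^{(p,q)}(f)}$ and pass to the lower limit. The residual power of $\log^{[q-1]}\sigma$ must be non-positive, and this is exactly where Theorem \ref{l1} enters: it supplies $\lambda_{h}^{(m,q)}(f)/\lambda_{h}^{(m,p)}(g)\leq\rho_{g}^{(p,q)}(f)$, which settles the first two bounds, while for the last two one also invokes the monotonicity $\lambda_{h}^{(m,p)}(g)\leq\rho_{h}^{(m,p)}(g)$ to obtain $\lambda_{h}^{(m,q)}(f)/\rho_{h}^{(m,p)}(g)\leq\lambda_{h}^{(m,q)}(f)/\lambda_{h}^{(m,p)}(g)\leq\rho_{g}^{(p,q)}(f)$. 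Since $\log^{[q-1]}\sigma\to\infty$, a non-positive residual exponent makes that factor at most $1$ for large $\sigma$, so the lower limit along the chosen sequence is dominated by the constant prefactor; letting $\varepsilon\to0^{+}$ converts each prefactor into the corresponding bracketed quantity, and taking the minimum over the four yields the corollary. The one genuinely non-mechanical point is this exponent comparison for the two $\rho_{h}^{(m,p)}(g)$-bounds, which is not displayed verbatim in Theorem \ref{l1} but is recovered from it together with $\lambda\leq\rho$; the rest is the routine lower-limit-along-a-sequence bookkeeping already used in the proof of Theorem \ref{t2}.
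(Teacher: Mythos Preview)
Your proposal is correct and follows exactly the route indicated by the paper: the paper omits the proof but states that the four bounds come from the pairs $(\ref{6})$ and $(\ref{12})$; $(\ref{7})$ and $(\ref{9})$; $(\ref{12})$ and $(\ref{14})$; $(\ref{9})$ and $(\ref{15})$, combined via the technique of Theorem~\ref{t2} and Theorem~\ref{l1}, which is precisely the set of pairings and the exponent comparison you carry out. Your explicit remark that the last two bounds need the trivial inequality $\lambda_{h}^{(m,p)}(g)\leq\rho_{h}^{(m,p)}(g)$ in addition to Theorem~\ref{l1} is a detail the paper leaves implicit, and your identification of $\sigma_{h}^{(m,p)}(g)$ and $\overline{\sigma}_{h}^{(m,p)}(g)$ with $\Delta_{h}^{(m,p)}(g)$ and $\overline{\Delta}_{h}^{(m,p)}(g)$ is the intended reading.
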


{\normalsize \qquad The conclusion of the above corollary can be carried out
from pairwise inequalities no $\left( \ref{6}\right) $ and $\left( \ref{12}%
\right) ;$ $\left( \ref{7}\right) $ and $\left( \ref{9}\right) ;$ $\left( %
\ref{12}\right) $ and $\left( \ref{14}\right) $; $\left( \ref{9}\right) $
and $\left( \ref{15}\right) $ respectively after applying the same technique
of Theorem \ref{t2} and with the help of Theorem \ref{l1}. Therefore its
proof is omitted. }

{\normalsize \qquad Similarly in the line of Theorem \ref{t1} and with the
help of Theorem \ref{l1}, one may easily carried out the following theorem
from pairwise inequalities no $\left( \ref{11}\right) $ and $\left( \ref{13}%
\right) ;$ $\left( \ref{10}\right) $ and $\left( \ref{16}\right) $; $\left( %
\ref{6}\right) $ and $\left( \ref{9}\right) $ respectively and therefore its
proofs is omitted: }

\begin{theorem}
{\normalsize \label{t3} }Let $f$ and $g$ be any two entire functions VVDS
defined by $\left( \ref{x}\right) $ with relative index-pairs $\left(
m,q\right) $ and $\left( m,p\right) $ with respect to another entire
function $h$ VVDS defined by $\left( \ref{x}\right) $ respectively where $%
p\geq 0$, $q\geq 0$ and $m\geq 0.$ Then{\normalsize 
\begin{equation*}
\max \left\{ \left[ \frac{\overline{\tau }_{h}^{\left( m,q\right) }\left(
f\right) }{\overline{\tau }_{h}^{\left( m,p\right) }\left( g\right) }\right]
^{\frac{1}{\lambda _{g}\left( m,p\right) }},\left[ \frac{\tau _{h}^{\left(
m,q\right) }\left( f\right) }{\tau _{h}^{\left( m,p\right) }\left( g\right) }%
\right] ^{\frac{1}{\lambda _{h}^{\left( m,p\right) }\left( g\right) }%
}\right\} \leq \overline{\tau }_{g}^{\left( p,q\right) }\left( f\right) \leq %
\left[ \frac{\overline{\tau }_{h}^{\left( m,q\right) }\left( f\right) }{%
\overline{\Delta }_{h}^{\left( m,p\right) }\left( g\right) }\right] ^{\frac{1%
}{\rho _{h}^{\left( m,p\right) }\left( g\right) }}~.
\end{equation*}%
}
\end{theorem}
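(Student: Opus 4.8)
The plan is to mirror the arguments that produced $(\ref{17})$--$(\ref{21})$ in Theorem \ref{t1} and $(\ref{23})$--$(\ref{27})$ in Theorem \ref{t2}, deriving each of the three displayed bounds by composing one inequality for $M_f$ with one inequality for $M_h$ and then passing to the appropriate one-sided limit of $\log^{[p-1]}M_g^{-1}M_f(\sigma)/\left[\log^{[q-1]}\sigma\right]^{\lambda_g^{(p,q)}(f)}$. The denominator carries the exponent $\lambda_g^{(p,q)}(f)$ precisely because $\overline{\tau}_g^{(p,q)}(f)$ is defined as the upper limit of this quotient, and throughout the exponent comparisons supplied by Theorem \ref{l1} are what keep the surplus powers of $\log^{[q-1]}\sigma$ from spoiling the estimates.

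For the first member of the $\max$ I would begin from $(\ref{11})$, which holds along a sequence $\sigma\to\infty$, apply the increasing map $M_g^{-1}$, and then insert $(\ref{13})$ evaluated at the argument $\exp^{[m-1]}\left[(\overline{\tau}_h^{(m,q)}(f)-\varepsilon)\left[\log^{[q-1]}\sigma\right]^{\lambda_h^{(m,q)}(f)}\right]$. Since $\log^{[m-1]}\exp^{[m-1]}$ is the identity the composition collapses, and after applying $\log^{[p-1]}$ one obtains, along that sequence,
\[
\log^{[p-1]}M_g^{-1}M_f(\sigma)\geq \left(\frac{\overline{\tau}_h^{(m,q)}(f)-\varepsilon}{\overline{\tau}_h^{(m,p)}(g)+\varepsilon}\right)^{\frac{1}{\lambda_h^{(m,p)}(g)}}\left[\log^{[q-1]}\sigma\right]^{\frac{\lambda_h^{(m,q)}(f)}{\lambda_h^{(m,p)}(g)}}.
\]
Dividing by $\left[\log^{[q-1]}\sigma\right]^{\lambda_g^{(p,q)}(f)}$, taking the upper limit along the sequence, and letting $\varepsilon\downarrow 0$ delivers the bound, since Theorem \ref{l1} gives $\lambda_h^{(m,q)}(f)/\lambda_h^{(m,p)}(g)\geq \lambda_g^{(p,q)}(f)$, so the leftover power of $\log^{[q-1]}\sigma$ has nonnegative exponent and does not weaken the lower estimate. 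The second member of the $\max$ is identical in structure but built from $(\ref{10})$ (valid for all large $\sigma$) and $(\ref{16})$ (valid along a sequence), yielding the factor $\left[\tau_h^{(m,q)}(f)/\tau_h^{(m,p)}(g)\right]^{1/\lambda_h^{(m,p)}(g)}$.

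For the upper bound I would use $(\ref{9})$ and $(\ref{6})$, both valid for all sufficiently large $\sigma$, so that their composition holds for all large $\sigma$ and one may take the full upper limit directly. Applying $M_g^{-1}$ to $(\ref{9})$ and substituting $(\ref{6})$ at the corresponding argument gives
\[
\log^{[p-1]}M_g^{-1}M_f(\sigma)\leq \left(\frac{\overline{\tau}_h^{(m,q)}(f)+\varepsilon}{\overline{\Delta}_h^{(m,p)}(g)-\varepsilon}\right)^{\frac{1}{\rho_h^{(m,p)}(g)}}\left[\log^{[q-1]}\sigma\right]^{\frac{\lambda_h^{(m,q)}(f)}{\rho_h^{(m,p)}(g)}},
\]
and here Theorem \ref{l1} supplies $\lambda_h^{(m,q)}(f)/\rho_h^{(m,p)}(g)\leq \lambda_g^{(p,q)}(f)$, so the surplus power of $\log^{[q-1]}\sigma$ now has nonpositive exponent; dividing by $\left[\log^{[q-1]}\sigma\right]^{\lambda_g^{(p,q)}(f)}$ and letting $\varepsilon\downarrow 0$ yields $\overline{\tau}_g^{(p,q)}(f)\leq\left[\overline{\tau}_h^{(m,q)}(f)/\overline{\Delta}_h^{(m,p)}(g)\right]^{1/\rho_h^{(m,p)}(g)}$. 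Combining the three estimates completes the argument.

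The step I expect to require the most care is the bookkeeping in the two lower bounds, where one of the two ingredient inequalities holds only along a sequence. I would note that the inner argument $\exp^{[m-1]}[\cdots]$ is an increasing function of $\sigma$ tending to $\infty$, so that composing a \textquotedblleft for all large $\sigma$\textquotedblright\ estimate with an \textquotedblleft along a sequence\textquotedblright\ estimate still produces a valid estimate along a suitable (possibly reindexed) sequence, which is exactly what is needed to bound the upper limit from below. The remaining manipulations are the routine simplification via $\log^{[m-1]}\exp^{[m-1]}$ being the identity together with the exponent comparisons read off from Theorem \ref{l1}.
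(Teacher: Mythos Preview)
Your proposal is correct and follows exactly the route indicated by the paper: the three bounds come from the pairings $(\ref{11})$ with $(\ref{13})$, $(\ref{10})$ with $(\ref{16})$, and $(\ref{9})$ with $(\ref{6})$, combined with the exponent comparisons $\lambda_h^{(m,q)}(f)/\lambda_h^{(m,p)}(g)\geq \lambda_g^{(p,q)}(f)$ and $\lambda_h^{(m,q)}(f)/\rho_h^{(m,p)}(g)\leq \lambda_g^{(p,q)}(f)$ from Theorem~\ref{l1}. Your discussion of the sequence-versus-all-large-$\sigma$ bookkeeping is more careful than what the paper itself records (its proof is omitted), but the argument is the same.
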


\begin{corollary}
{\normalsize \label{c3} }Let $f$ and $g$ be any two entire functions VVDS
defined by $\left( \ref{x}\right) $ with relative index-pairs $\left(
m,q\right) $ and $\left( m,p\right) $ with respect to another entire
function $h$ VVDS defined by $\left( \ref{x}\right) $ respectively where $%
p\geq 0$, $q\geq 0$ and $m\geq 0.$ Then{\normalsize 
\begin{equation*}
\overline{\tau }_{g}^{\left( p,q\right) }\left( f\right) \geq ~\ \ \ \ \ \ \
\ \ \ \ \ \ \ \ \ \ \ \ \ \ \ \ \ \ \ \ \ \ \ \ \ \ \ \ \ \ \ \ \ \ \ \ \ \
\ \ \ \ \ \ \ \ \ \ \ \ \ \ \ \ \ \ \ \ \ \ \ \ \ \ \ \ \ \ \ \ \ \ \ \ \ \
\ \ \ \ \ \ \ \ \ \ \ \ \ \ \ \ \ \ \ \ \ \ \ \ \ \ \ \ \ \ \ \ \ \ \ \ \ \
\ \ 
\end{equation*}%
\begin{equation*}
\max \left\{ \left[ \frac{\overline{\Delta }_{h}^{\left( m,q\right) }\left(
f\right) }{\overline{\Delta }_{h}^{\left( m,p\right) }\left( g\right) }%
\right] ^{\frac{1}{\rho _{h}^{\left( m,p\right) }\left( g\right) }},\left[ 
\frac{\Delta _{h}^{\left( m,q\right) }\left( f\right) }{\Delta _{h}^{\left(
m,p\right) }\left( g\right) }\right] ^{\frac{1}{\rho _{h}^{\left( m,p\right)
}\left( g\right) }},\left[ \frac{\Delta _{h}^{\left( m,q\right) }\left(
f\right) }{\overline{\tau }_{h}^{\left( m,p\right) }\left( g\right) }\right]
^{\frac{1}{\lambda _{h}^{\left( m,p\right) }\left( g\right) }},\left[ \frac{%
\overline{\Delta }_{h}^{\left( m,q\right) }\left( f\right) }{\tau
_{h}^{\left( m,p\right) }\left( g\right) }\right] ^{\frac{1}{\lambda
_{h}^{\left( m,p\right) }\left( g\right) }}\right\} ~.
\end{equation*}%
}
\end{corollary}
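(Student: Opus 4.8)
The plan is to re-run the composition argument of Theorem \ref{t3} (equivalently of Theorem \ref{t2}) with four new pairings of the two-sided estimates, reading each entry of the maximum off a separate lower estimate for $\log^{[p-1]}M_g^{-1}M_f(\sigma)$. Since $\overline{\tau}_g^{(p,q)}(f)=\overline{\lim}_{\sigma\to+\infty}\log^{[p-1]}M_g^{-1}M_f(\sigma)/[\log^{[q-1]}\sigma]^{\lambda_g^{(p,q)}(f)}$ is a $\limsup$, any lower estimate valid merely along a sequence $\sigma\to+\infty$ already contributes to it, so it suffices to produce, for each of the four candidate constants $C$, a chain of the form $\log^{[p-1]}M_g^{-1}M_f(\sigma)\ge C\,[\log^{[q-1]}\sigma]^{E}$ with $E\ge\lambda_g^{(p,q)}(f)$ holding along a suitable sequence.

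Concretely I would combine a lower estimate for $M_f$ in terms of $M_h$ (drawn from $(\ref{2})$, $(\ref{3})$) with a lower estimate for $M_h$ in terms of $M_g$ (drawn from $(\ref{5})$, $(\ref{8})$, $(\ref{13})$, $(\ref{16})$), matching the indicator that appears in each term: the first entry comes from $(\ref{2})$ and $(\ref{8})$, the second from $(\ref{3})$ and $(\ref{5})$, the third from $(\ref{3})$ and $(\ref{13})$, and the fourth from $(\ref{2})$ and $(\ref{16})$. In each case one applies the increasing map $M_g^{-1}$ to the estimate for $M_f$, inserts the corresponding $M_h\ge M_g[\cdots]$ estimate, uses $\log^{[m-1]}\exp^{[m-1]}=\mathrm{id}$ to collapse the nesting, and takes $\log^{[p-1]}$. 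The first two pairings carry the power $1/\rho_h^{(m,p)}(g)$ and produce the residual exponent $\rho_h^{(m,q)}(f)/\rho_h^{(m,p)}(g)$, while the last two carry $1/\lambda_h^{(m,p)}(g)$ and produce $\rho_h^{(m,q)}(f)/\lambda_h^{(m,p)}(g)$.

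To convert each chain into the asserted bound I invoke Theorem \ref{l1}. Its minimum clause gives $\lambda_g^{(p,q)}(f)\le\rho_h^{(m,q)}(f)/\rho_h^{(m,p)}(g)$, which handles the first two entries, and its closing chain $\lambda_g^{(p,q)}(f)\le\rho_g^{(p,q)}(f)\le\rho_h^{(m,q)}(f)/\lambda_h^{(m,p)}(g)$ handles the last two. Thus in every case the residual exponent $E$ dominates $\lambda_g^{(p,q)}(f)$, so after dividing by $[\log^{[q-1]}\sigma]^{\lambda_g^{(p,q)}(f)}$ the leftover power of $\log^{[q-1]}\sigma$ is nonnegative and hence at least $1$ for all large $\sigma$; letting $\varepsilon\downarrow0$ and passing to $\limsup$ yields the relevant constant as a lower bound for $\overline{\tau}_g^{(p,q)}(f)$. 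Taking the largest of the four constants finishes the proof.

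The only point needing care is the matching of regimes across a pairing. The factors $(\ref{3})$, $(\ref{8})$, $(\ref{16})$ are valid only along sequences of $\sigma$, whereas $(\ref{2})$, $(\ref{5})$, $(\ref{13})$ hold for all large $\sigma$; but in each chain the inner argument $\exp^{[m-1]}[\cdots]$ is a continuous strictly increasing function of $\sigma$ onto a ray, so the sequence on which the sequence-type factor holds pulls back to a sequence $\sigma\to+\infty$ on which the whole composition is valid---precisely the regime a $\limsup$ sees. Everything else---the monotonicity of $M_f$, $M_g$, $M_h$ and of the iterated exponentials and logarithms used to invert the estimates, together with the bookkeeping of the $\pm\varepsilon$---is routine, which is presumably why the authors omit it.
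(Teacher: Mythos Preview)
Your proposal is correct and follows essentially the same route as the paper: the authors state that the corollary is carried out from the pairings $(\ref{3})$ with $(\ref{5})$, $(\ref{2})$ with $(\ref{8})$, $(\ref{3})$ with $(\ref{13})$, and $(\ref{2})$ with $(\ref{16})$, using the technique of Theorem~\ref{t2} together with Theorem~\ref{l1}, which is exactly the four pairings and the exponent comparison you describe. Your additional remark about pulling back the sequence-type estimate through the strictly increasing inner argument makes explicit a step the paper leaves implicit.
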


{\normalsize \qquad The conclusion of the above corollary can be carried out
from pairwise inequalities no $\left( \ref{3}\right) $ and $\left( \ref{5}%
\right) ;$ $\left( \ref{2}\right) $ and $\left( \ref{8}\right) ;$ $\left( %
\ref{3}\right) $ and $\left( \ref{13}\right) $; $\left( \ref{2}\right) $ and 
$\left( \ref{16}\right) $ respectively after applying the same technique of
Theorem \ref{t2} and with the help of Theorem \ref{l1}. Therefore its proof
is omitted.}

\qquad Now we state the following two theorems without their proofs as
because they can be derived easily using the same technique or with some
easy reasoning by the help of with the help of Remark \ref{r1} and therefore
left to the readers.

\begin{theorem}
\label{t4.1} Let $f$ and $g$ be any two entire functions VVDS defined by $%
\left( \ref{x}\right) $ with relative index-pairs $\left( m,q\right) $ and $%
\left( m,p\right) $ with respect to another entire function $h$ VVDS defined
by $\left( \ref{x}\right) $ respectively where $p\geq 0$, $q\geq 0$ and $%
m\geq 0.$ Also let $\lambda _{h}^{\left( m,p\right) }\left( g\right) =\rho
_{h}^{\left( m,p\right) }\left( g\right) $. Then%
\begin{multline*}
\left[ \frac{\overline{\Delta }_{h}^{\left( m,q\right) }\left( f\right) }{%
\Delta _{h}^{\left( m,p\right) }\left( g\right) }\right] ^{\frac{1}{\rho
_{h}^{\left( m,p\right) }\left( g\right) }}\leq \overline{\Delta }%
_{g}^{\left( p,q\right) }\left( f\right) \leq \min \left\{ \left[ \frac{%
\overline{\Delta }_{h}^{\left( m,q\right) }\left( f\right) }{\overline{%
\Delta }_{h}^{\left( m,p\right) }\left( g\right) }\right] ^{\frac{1}{\rho
_{h}^{\left( m,p\right) }\left( g\right) }},\left[ \frac{\Delta _{h}^{\left(
m,q\right) }\left( f\right) }{\Delta _{h}^{\left( m,p\right) }\left(
g\right) }\right] ^{\frac{1}{\rho _{h}^{\left( m,p\right) }\left( g\right) }%
}\right\} \\
\leq \max \left\{ \left[ \frac{\overline{\Delta }_{h}^{\left( m,q\right)
}\left( f\right) }{\overline{\Delta }_{h}^{\left( m,p\right) }\left(
g\right) }\right] ^{\frac{1}{\rho _{h}^{\left( m,p\right) }\left( g\right) }%
},\left[ \frac{\Delta _{h}^{\left( m,q\right) }\left( f\right) }{\Delta
_{h}^{\left( m,p\right) }\left( g\right) }\right] ^{\frac{1}{\rho
_{h}^{\left( m,p\right) }\left( g\right) }}\right\} \leq \Delta _{g}^{\left(
p,q\right) }\left( f\right) \leq \left[ \frac{\Delta _{h}^{\left( m,q\right)
}\left( f\right) }{\overline{\Delta }_{h}^{\left( m,p\right) }\left(
g\right) }\right] ^{\frac{1}{\rho _{h}^{\left( m,p\right) }\left( g\right) }}
\end{multline*}%
and%
\begin{multline*}
\left[ \frac{\tau _{h}^{\left( m,q\right) }\left( f\right) }{\overline{\tau }%
_{h}^{\left( m,p\right) }\left( g\right) }\right] ^{\frac{1}{\lambda
_{h}^{\left( m,p\right) }\left( g\right) }}\leq \tau _{g}^{\left( p,q\right)
}\left( f\right) \leq \min \left\{ \left[ \frac{\tau _{h}^{\left( m,q\right)
}\left( f\right) }{\tau _{h}^{\left( m,p\right) }\left( g\right) }\right] ^{%
\frac{1}{\lambda _{g}\left( _{h}^{\left( m,p\right) }\left( g\right)
m,p\right) }},\left[ \frac{\overline{\tau }_{h}^{\left( m,q\right) }\left(
f\right) }{\overline{\tau }_{h}^{\left( m,p\right) }\left( g\right) }\right]
^{\frac{1}{\lambda _{h}^{\left( m,p\right) }\left( g\right) }}\right\} \\
\leq \max \left\{ \left[ \frac{\tau _{h}^{\left( m,q\right) }\left( f\right) 
}{\tau _{h}^{\left( m,p\right) }\left( g\right) }\right] ^{\frac{1}{\lambda
_{g}\left( _{h}^{\left( m,p\right) }\left( g\right) m,p\right) }},\left[ 
\frac{\overline{\tau }_{h}^{\left( m,q\right) }\left( f\right) }{\overline{%
\tau }_{h}^{\left( m,p\right) }\left( g\right) }\right] ^{\frac{1}{\lambda
_{h}^{\left( m,p\right) }\left( g\right) }}\right\} \leq \overline{\tau }%
_{g}^{\left( p,q\right) }\left( f\right) \leq \left[ \frac{\overline{\tau }%
_{h}^{\left( m,q\right) }\left( f\right) }{\tau _{h}^{\left( m,p\right)
}\left( g\right) }\right] ^{\frac{1}{\lambda _{h}^{\left( m,p\right) }\left(
g\right) }}~.
\end{multline*}
\end{theorem}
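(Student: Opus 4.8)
The plan is to deduce both chains of inequalities directly from Theorems \ref{t1}, \ref{t2}, \ref{t3} and \ref{t4}, exploiting the single extra hypothesis $\lambda_h^{(m,p)}(g)=\rho_h^{(m,p)}(g)$, which says precisely that $g$ is of regular relative $(m,p)$-Ritt growth with respect to $h$. No new limit computation is required; everything reduces to recognising that, under this hypothesis, the relative type and the relative weak-type growth indicators of $g$ collapse onto one another, exactly as the relative orders do in Remark \ref{r1}.

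First I would record the type-analogue of Remark \ref{r1}. Comparing Definition \ref{d8} with Definition \ref{d9}, the quantities $\Delta_h^{(m,p)}(g)$ and $\overline{\Delta}_h^{(m,p)}(g)$ are the $\limsup$ and $\liminf$ of $\log^{[m-1]}M_h^{-1}M_g(\sigma)/[\log^{[p-1]}\sigma]^{\rho_h^{(m,p)}(g)}$, whereas $\overline{\tau}_h^{(m,p)}(g)$ and $\tau_h^{(m,p)}(g)$ are the $\limsup$ and $\liminf$ of the same quotient but with exponent $\lambda_h^{(m,p)}(g)$. Hence, as soon as $\lambda_h^{(m,p)}(g)=\rho_h^{(m,p)}(g)$, the two quotients coincide identically, yielding $\tau_h^{(m,p)}(g)=\overline{\Delta}_h^{(m,p)}(g)$ and $\overline{\tau}_h^{(m,p)}(g)=\Delta_h^{(m,p)}(g)$, together with the trivial exponent identity $1/\lambda_h^{(m,p)}(g)=1/\rho_h^{(m,p)}(g)$.

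With these substitutions in hand the first chain is assembled as follows. The outer lower bound $[\overline{\Delta}_h^{(m,q)}(f)/\Delta_h^{(m,p)}(g)]^{1/\rho_h^{(m,p)}(g)}\le\overline{\Delta}_g^{(p,q)}(f)$ and the interior $\min$-upper bound on $\overline{\Delta}_g^{(p,q)}(f)$ are delivered by Theorem \ref{t2}, after replacing $\overline{\tau}_h^{(m,p)}(g)$ by $\Delta_h^{(m,p)}(g)$ in its lower bound. The middle step $\min\le\max$ is automatic. The interior $\max$-lower bound on $\Delta_g^{(p,q)}(f)$ and the outer upper bound $\Delta_g^{(p,q)}(f)\le[\Delta_h^{(m,q)}(f)/\overline{\Delta}_h^{(m,p)}(g)]^{1/\rho_h^{(m,p)}(g)}$ come from Theorem \ref{t1}, after substituting $\tau_h^{(m,p)}(g)=\overline{\Delta}_h^{(m,p)}(g)$ and $\overline{\tau}_h^{(m,p)}(g)=\Delta_h^{(m,p)}(g)$ in its $\max$-lower bound. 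The second chain is obtained symmetrically: its outer lower bound and interior $\min$-upper bound on $\tau_g^{(p,q)}(f)$ follow from Theorem \ref{t4} (with $\overline{\Delta}_h^{(m,p)}(g)\mapsto\tau_h^{(m,p)}(g)$ and $\Delta_h^{(m,p)}(g)\mapsto\overline{\tau}_h^{(m,p)}(g)$), the $\min\le\max$ step supplies the middle link, and the interior $\max$-lower bound together with the outer upper bound on $\overline{\tau}_g^{(p,q)}(f)$ follow from Theorem \ref{t3}.

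There is essentially no analytic obstacle, since all the delicate maximum-modulus estimates were already carried out in the proofs of Theorems \ref{t1}--\ref{t4}. The only point needing care is the bookkeeping: one must verify that, after the substitutions of the second paragraph, the interior pair produced by Theorem \ref{t2} as a $\min$ is literally the same two-element set that Theorem \ref{t1} produces as a $\max$ (and likewise that Theorem \ref{t4} and Theorem \ref{t3} yield a common pair for the weak-type chain). Once this term-by-term matching is confirmed, the five quantities genuinely line up into a single chain and the $\min\le\max$ inequality seals the argument.
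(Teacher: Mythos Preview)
Your proposal is correct and is essentially the approach the paper has in mind: the paper states this theorem without proof, noting only that it ``can be derived easily using the same technique or with some easy reasoning by the help of Remark \ref{r1}'', and your argument is precisely that easy reasoning. Your key observation---that under $\lambda_h^{(m,p)}(g)=\rho_h^{(m,p)}(g)$ one has $\tau_h^{(m,p)}(g)=\overline{\Delta}_h^{(m,p)}(g)$ and $\overline{\tau}_h^{(m,p)}(g)=\Delta_h^{(m,p)}(g)$---is the type-level analogue of Remark \ref{r1}, and with it Theorems \ref{t1}--\ref{t4} assemble into the two displayed chains exactly as you describe, with the common two-element sets matching term-by-term so that the $\min\le\max$ link is genuine.
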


\begin{theorem}
\label{t4.2} Let $f$ and $g$ be any two entire functions VVDS defined by $%
\left( \ref{x}\right) $ with relative index-pairs $\left( m,q\right) $ and $%
\left( m,p\right) $ with respect to another entire function $h$ VVDS defined
by $\left( \ref{x}\right) $ respectively where $p\geq 0$, $q\geq 0$ and $%
m\geq 0.$ Also let $\lambda _{h}^{\left( m,q\right) }\left( f\right) =\rho
_{h}^{\left( m,q\right) }\left( f\right) $. Then%
\begin{multline*}
\left[ \frac{\overline{\Delta }_{h}^{\left( m,q\right) }\left( f\right) }{%
\Delta _{h}^{\left( m,p\right) }\left( g\right) }\right] ^{\frac{1}{\rho
_{h}^{\left( m,p\right) }\left( g\right) }}\leq \tau _{g}^{\left( p,q\right)
}\left( f\right) \leq \min \left\{ \left[ \frac{\overline{\Delta }%
_{h}^{\left( m,q\right) }\left( f\right) }{\overline{\Delta }_{h}^{\left(
m,p\right) }\left( g\right) }\right] ^{\frac{1}{\rho _{h}^{\left( m,p\right)
}\left( g\right) }},\left[ \frac{\Delta _{h}^{\left( m,q\right) }\left(
f\right) }{\Delta _{h}^{\left( m,p\right) }\left( g\right) }\right] ^{\frac{1%
}{\rho _{h}^{\left( m,p\right) }\left( g\right) }}\right\} \\
\leq \max \left\{ \left[ \frac{\overline{\Delta }_{h}^{\left( m,q\right)
}\left( f\right) }{\overline{\Delta }_{h}^{\left( m,p\right) }\left(
g\right) }\right] ^{\frac{1}{\rho _{h}^{\left( m,p\right) }\left( g\right) }%
},\left[ \frac{\Delta _{h}^{\left( m,q\right) }\left( f\right) }{\Delta
_{h}^{\left( m,p\right) }\left( g\right) }\right] ^{\frac{1}{\rho
_{h}^{\left( m,p\right) }\left( g\right) }}\right\} \leq \overline{\tau }%
_{g}^{\left( p,q\right) }\left( f\right) \leq \left[ \frac{\Delta
_{h}^{\left( m,q\right) }\left( f\right) }{\overline{\Delta }_{h}^{\left(
m,p\right) }\left( g\right) }\right] ^{\frac{1}{\rho _{h}^{\left( m,p\right)
}\left( g\right) }}
\end{multline*}%
and 
\begin{multline*}
\left[ \frac{\tau _{h}^{\left( m,q\right) }\left( f\right) }{\overline{\tau }%
_{h}^{\left( m,p\right) }\left( g\right) }\right] ^{\frac{1}{\lambda
_{h}^{\left( m,p\right) }\left( g\right) }}\leq \overline{\Delta }%
_{g}^{\left( p,q\right) }\left( f\right) \leq \min \left\{ \left[ \frac{\tau
_{h}^{\left( m,q\right) }\left( f\right) }{\tau _{h}^{\left( m,p\right)
}\left( g\right) }\right] ^{\frac{1}{\lambda _{h}^{\left( m,p\right) }\left(
g\right) }},\left[ \frac{\overline{\tau }_{h}^{\left( m,q\right) }\left(
f\right) }{\overline{\tau }_{h}^{\left( m,p\right) }\left( g\right) }\right]
^{\frac{1}{\lambda _{h}^{\left( m,p\right) }\left( g\right) }}\right\} \\
\leq \max \left\{ \left[ \frac{\tau _{h}^{\left( m,q\right) }\left( f\right) 
}{\tau _{h}^{\left( m,p\right) }\left( g\right) }\right] ^{\frac{1}{\lambda
_{h}^{\left( m,p\right) }\left( g\right) }},\left[ \frac{\overline{\tau }%
_{h}^{\left( m,q\right) }\left( f\right) }{\overline{\tau }_{h}^{\left(
m,p\right) }\left( g\right) }\right] ^{\frac{1}{\lambda _{h}^{\left(
m,p\right) }\left( g\right) }}\right\} \leq \Delta _{g}^{\left( p,q\right)
}\left( f\right) \leq \left[ \frac{\overline{\tau }_{h}^{\left( m,q\right)
}\left( f\right) }{\tau _{h}^{\left( m,p\right) }\left( g\right) }\right] ^{%
\frac{1}{\lambda _{h}^{\left( m,p\right) }\left( g\right) }}~.
\end{multline*}
\end{theorem}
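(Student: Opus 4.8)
The plan is to reduce Theorem \ref{t4.2} to the machinery already assembled for Theorems \ref{t1}--\ref{t4}, exploiting the regularity hypothesis $\lambda_h^{(m,q)}(f)=\rho_h^{(m,q)}(f)$ in two complementary ways. First I would invoke Remark \ref{r1}: under exactly this hypothesis it yields the \emph{exact} identities $\rho_g^{(p,q)}(f)=\rho_h^{(m,q)}(f)/\lambda_h^{(m,p)}(g)$ and $\lambda_g^{(p,q)}(f)=\rho_h^{(m,q)}(f)/\rho_h^{(m,p)}(g)$. Second, since the regularity of $f$ forces the normalizing exponents of its relative type and relative weak type to coincide, the defining quotients in Definitions \ref{d8} and \ref{d9} (for $f$ with respect to $h$) share the common denominator $[\log^{[q-1]}\sigma]^{\rho_h^{(m,q)}(f)}$; comparing their upper and lower limits then gives the identities $\Delta_h^{(m,q)}(f)=\overline{\tau}_h^{(m,q)}(f)$ and $\overline{\Delta}_h^{(m,q)}(f)=\tau_h^{(m,q)}(f)$, which I will use to translate every $\Delta_h(f)$-quantity into a $\tau_h(f)$-quantity and conversely.

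The conceptual point is that in Theorems \ref{t1}--\ref{t4} the combined estimate always carried the exponent $\rho_h^{(m,q)}(f)/\rho_h^{(m,p)}(g)$ or $\rho_h^{(m,q)}(f)/\lambda_h^{(m,p)}(g)$ on $\log^{[q-1]}\sigma$, and one had to \emph{absorb} it against $\rho_g^{(p,q)}(f)$ or $\lambda_g^{(p,q)}(f)$ using only the one-sided comparisons of Theorem \ref{l1}; this is precisely what forced those conclusions to be single-sided (a lone $\min$ or $\max$). Under the present hypothesis those two exponents coincide \emph{on the nose} with the normalizers $\lambda_g^{(p,q)}(f)$ and $\rho_g^{(p,q)}(f)$ respectively, so no slack is lost and each pairing of inequalities from \eqref{1}--\eqref{16} now delivers a genuine two-sided bound, whence the two sandwiches of the statement. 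The interior links $\min\{\cdot\}\le\max\{\cdot\}$ are trivial, and $\tau_g^{(p,q)}(f)\le\overline{\tau}_g^{(p,q)}(f)$ together with $\overline{\Delta}_g^{(p,q)}(f)\le\Delta_g^{(p,q)}(f)$ hold automatically, since within each block the two indicators are the lower and upper limit of one and the same quotient.

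For the first chain, governed by the weak-type normalizer $\lambda_g^{(p,q)}(f)=\rho_h^{(m,q)}(f)/\rho_h^{(m,p)}(g)$, I would pair the type estimates \eqref{1}--\eqref{4} of $f$ with the type estimates \eqref{5}--\eqref{8} of $g$, exactly as in the proof of Theorem \ref{t1}, these being the pairings whose resulting exponent is $\rho_h^{(m,q)}(f)/\rho_h^{(m,p)}(g)$. Concretely: \eqref{2} with \eqref{5} (both valid for all large $\sigma$) gives the lower bound $[\overline{\Delta}_h^{(m,q)}(f)/\Delta_h^{(m,p)}(g)]^{1/\rho_h^{(m,p)}(g)}$ for $\tau_g^{(p,q)}(f)$; \eqref{4} with \eqref{6} and \eqref{1} with \eqref{7} give the two upper bounds for $\tau_g^{(p,q)}(f)$ whose minimum appears; \eqref{2} with \eqref{8} and \eqref{3} with \eqref{5} give the two lower bounds for $\overline{\tau}_g^{(p,q)}(f)$ whose maximum appears; and \eqref{1} with \eqref{6} gives the final upper bound for $\overline{\tau}_g^{(p,q)}(f)$. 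In each step division by $[\log^{[q-1]}\sigma]^{\lambda_g^{(p,q)}(f)}$ cancels the power of $\log^{[q-1]}\sigma$ identically, and letting $\varepsilon\downarrow0$ closes the argument.

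The second chain, governed by the type normalizer $\rho_g^{(p,q)}(f)=\rho_h^{(m,q)}(f)/\lambda_h^{(m,p)}(g)$, is treated the same way but now pairing the type estimates \eqref{1}--\eqref{4} of $f$ with the \emph{weak}-type estimates \eqref{13}--\eqref{16} of $g$ (these being the pairings with resulting exponent $\rho_h^{(m,q)}(f)/\lambda_h^{(m,p)}(g)$), and then rewriting each $\Delta_h^{(m,q)}(f)$ and $\overline{\Delta}_h^{(m,q)}(f)$ as $\overline{\tau}_h^{(m,q)}(f)$ and $\tau_h^{(m,q)}(f)$ via the identities of the first paragraph, so that the bounds appear in the $\tau_h(f),\overline{\tau}_h(f)$ form printed in the statement. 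I anticipate no analytic difficulty anywhere; the one place to be careful---and the main obstacle---is the bookkeeping: for each of the ten individual inequalities one must select the correct pair from \eqref{1}--\eqref{16}, matching an ``all large $\sigma$'' estimate against a ``sequence'' estimate so that the intended $\liminf$ or $\limsup$ is produced, and verify that it is the exact exponent match furnished by Remark \ref{r1} that upgrades each one-sided estimate of Theorems \ref{t1}--\ref{t4} into the two-sided one required here.
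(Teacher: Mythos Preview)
Your proposal is correct and follows exactly the approach the paper itself indicates: the paper omits the proof of Theorem \ref{t4.2} entirely, stating only that it ``can be derived easily using the same technique'' as Theorems \ref{t1}--\ref{t4} ``with the help of Remark \ref{r1},'' which is precisely the reduction you describe. Your additional observation that the regularity hypothesis forces $\Delta_h^{(m,q)}(f)=\overline{\tau}_h^{(m,q)}(f)$ and $\overline{\Delta}_h^{(m,q)}(f)=\tau_h^{(m,q)}(f)$, together with your careful bookkeeping of which pairs from \eqref{1}--\eqref{16} yield each of the ten individual inequalities, supplies the details the paper leaves to the reader.
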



\begin{thebibliography}{99}
\bibitem{1} S. K. Datta, T. Biswas and P. Das: On relative Ritt type and
relative Ritt weak type of entire functions represented by vector valued
Dirichlet series, Journal of Calcutta Mathematics Society, Vol. 11, No 2
(2015), pp. 67-74.

\bibitem{x} S. K. Datta and T. Biswas: A few results on relative Ritt type
and relative Ritt weak type of entire functions represented by Vector valued
Dirichlet series, Poincare Journal of Analysis and Applications, Vol. 2016,
No.2 , pp. 49-69.

\bibitem{2} S. K. Datta and T. Biswas: (p, q)-th relative Ritt order of
entire functions in the form of vector valued Dirichlet series, An. Univ.
Oradea, fasc. Mat., Accepted for publication (2017) and to appear.

\bibitem{3} O. P. Juneja, K. Nandan and G. P. Kapoor: On the (p, q)-order
and lower (p, q)-order of an entire Dirichlet series, Tamkang J. Math., Vol.
9 (1978), 47-63.

\bibitem{4} Q. I. Rahaman: The Ritt order of the derivative of an entire
function, Ann. Polon. Math., Vol. 17 (1965), pp. 137-140.

\bibitem{5} C. T. Rajagopal and A. R. Reddy: A note on entire functions
represented by Dirichlet series, Ann. Polon. Math., Vol. 17 (1965), pp.
199-208.

\bibitem{6} J. F. Ritt: On certain points in the theory of Dirichlet series,
Amer. J. Math., Vol. 50 (1928), pp. 73-86.

\bibitem{7} G. S. Srivastava: A note on relative type of entire functions
represented by vector valued Dirichlet series, J. Class. Anal., Vol. 2, No.
1 (2013), pp. 61-72.

\bibitem{8} G. S. Srivastava and A. Sharma: On generalized order and
generalized type of vector valued Dirichlet series of slow growth, Int. J.
Math. Archive, Vol. 2, No. 12 (2011), pp. 2652-2659.

\bibitem{9} B. L. Srivastava: A study of spaces of certain classes of vector
valued Dirichlet series, Thesis, I. I. T., Kanpur, (1983).

\bibitem{10} R. P. Srivastav and R. K. Ghosh: On entire functions
represented by Dirichlet series, Ann. Polon. Math., Vol. 13 (1963), pp.
93-100.
\end{thebibliography}
\end{document}